\setlist[itemize]{noitemsep} 
\numberwithin{equation}{section}
\theoremstyle{plain}
\newtheorem{theorem}{Theorem}[section]
\newtheorem{lemma}{Lemma}[section]
\newtheorem{corollary}[theorem]{Corollary}
\theoremstyle{remark}\newtheorem{assumption}{Assumption}
\theoremstyle{remark}
\DeclareMathOperator*{\argmin}{arg\,min}
\newcommand{\DD}{\mathbb{D}}
\newcommand{\RR}{\mathbb{R}}
\newcommand{\ZZ}{\mathbb{Z}}
\newcommand{\LL}{\mathbb{L}}
\newcommand{\N}{\mathrm{N}}
\renewcommand{\P}{\mathrm{P}}
\newcommand{\E}{\mathrm{E}}
\renewcommand{\d}{\mathrm{d}}
\newcommand{\bj}{\bm{j}}
\newcommand{\bJ}{\bm{J}}
\newcommand{\by}{\bm{y}}
\newcommand{\bX}{\bm{X}}
\newcommand{\bx}{\bm{x}}
\newcommand{\bu}{\bm{u}}
\newcommand{\bv}{\bm{v}}
\newcommand{\bl}{\bm{l}}
\newcommand{\bmm}{\bm{m}}
\newcommand{\btheta}{\bm{\theta}}
\newcommand{\cF}{\mathcal{F}}
\newcommand{\cG}{\mathcal{G}}
\newcommand{\cL}{\mathcal{L}}
\newcommand{\Ind}{\mathbbm{1}}
\newcommand{\ceil}[1]{\left\lceil #1 \right\rceil}
\newcommand{\abs}[1]{\left| #1 \right|}
\newcommand{\trans}{{\mathrm{T}}}
\begin{document}

\title{Bayesian Inference for Multivariate Monotone Densities
\thanks{
The research was supported in part by NSF Grant number DMS-1916419.
}}

\author{Kang Wang \thanks{Department of Statistics, North Carolina State Universtiy. Email: kwang22@ncsu.edu}
\and
Subhashis Ghosal\thanks{Department of Statistics, North Carolina State Universtiy. Email: sghosal@stat.ncsu.edu}}

\date{}
\maketitle

\begin{abstract}
\noindent
We consider a nonparametric Bayesian approach to estimation and testing for a multivariate monotone density. Instead of following the conventional Bayesian route of putting a prior distribution complying with the monotonicity restriction, we put a prior on the step heights through binning and a Dirichlet distribution. An arbitrary piece-wise constant probability density is converted to a monotone one by a projection map, taking its $\mathbb{L}_1$-projection onto the space of monotone functions, which is subsequently normalized to integrate to one. We construct consistent Bayesian tests to test multivariate monotonicity of a probability density based on the $\mathbb{L}_1$-distance to the class of monotone functions. The test is shown to have a size going to zero and high power against alternatives sufficiently separated from the null hypothesis. To obtain a Bayesian credible interval for the value of the density function at an interior point with guaranteed asymptotic frequentist coverage, we consider a posterior quantile interval of an induced map transforming the function value to its value optimized over certain blocks. The limiting coverage is explicitly calculated and is seen to be higher than the credibility level used in the construction. By exploring the asymptotic relationship between the coverage and the credibility, we show that a desired asymptomatic coverage can be obtained exactly by starting with an appropriate credibility level. 
\vskip 0.3cm

\noindent
{\bf Keywords:} Bayesian tests for monotonicity;
Contraction rate;
Credible intervals;
Limiting coverage;
Multivariate density estimation.
\end{abstract}

\section{Introduction}\label{sec:introduction}

The problem of estimating a probability density from observed data has been well-studied theoretically, and the technique has been applied in diverse fields. Usually, density is estimated assuming the smoothness of the function, but in many contexts, structural properties like monotonicity, convexity, or log-concavity occur naturally. Under such a setting, a density function may be estimated accurately without even continuity or smoothness. The nonparametric maximum likelihood estimator of a univariate monotone nonincreasing density function, known as the Grenander estimator, is characterized as the left-derivative of the least concave majorant of the empirical distribution functions \cite{Grenander1956}. The pointwise distributional theory was developed in \cite{PrakasaRao1969,Groeneboom1985,Kulikov2005,durot2007,Durot2012}, among others. Applications in survival analysis were considered by \cite{huang1994estimating,huang1995hazard}. Testing for monotonicity was explored by \cite{hall2000testing,ghosal2000testing}, and others. Confidence bands and confidence intervals in monotone shape-restricted problems were constructed and studied in \cite{dumbgen2003optimal,dumbgen2004confidence,cai2013adaptive,Banerjee2001}; see \cite{groeneboom2014nonparametric} for a comprehensive summary. Accommodating monotonicity in the multivariate context and studying the properties of the resulting procedures are more challenging. 
Risk bounds of the least squares estimator of multivariate isotonic regressions given by a max-min formula were studied by \cite{chatterjee2015risk,han2019isotonic,han2021set}. A new estimator by using a subset of the blocks in the formula was proposed in \cite{Fokianos2020}. The risk bounds of this estimator were derived in \cite{deng2020isotonic} and \cite{han2020} obtained its asymptotic distribution. 

Bayesian approaches by incorporating the monotone shape-restriction in the prior for the regression function were considered in \cite{neelon2004bayesian,cai2007bayesian,shively2009bayesian}. The authors used spline functions to represent the regression function and put a prior on the coefficient vector subject to the monotonicity constraint. These works rely heavily on Markov chain Monte Carlo (MCMC) sampling techniques and did not provide any result about the concentration of the posterior distribution.  
 The posterior contraction rate for a univariate monotone density function using a Dirichlet process mixture of a uniform prior was derived by \cite{Salomond2014monotonicity}, utilizing 
the fact that a monotone nonincreasing density on the positive real half-line can be represented as a scale-mixture of the uniform kernel. The same representation 
to study the contraction rate of an empirical Bayes procedure was also used by  \cite{martin2019empirical}. A different approach was pursued by 
\cite{lin2014bayesian}. They did not incorporate the monotonicity restriction on the function at the prior stage and used a Gaussian process prior. Instead, they projected posterior samples on the space of monotone functions to comply with the shape restriction. The induced distribution, to be called the ``projection-posterior distribution'' is then used to make an inference on the regression function. The projection technique was also used by \cite{chakraborty2020coverage, chakraborty2021convergence}  starting from a prior on step-functions with step-heights given independent normal prior distributions. These papers respectively obtained global posterior contraction rates and asymptotic frequentist coverage of a credible interval of the function value at an interior point. The first paper also included a Bayesian test for  monotonicity with desirable asymptotic properties. For a monotone decreasing density, the projection-posterior approach through a step function having a Dirichlet prior distribution was considered by \cite{chakraborty2022density}. They obtained the optimal global posterior contraction rate, constructed a consistent Bayesian test for monotonicity, and characterized the limiting coverage of a Bayesian credible interval for the value of the density at an interior point. For both monotone regression and monotone density, it was observed that the limiting coverage of the credible interval based on projection-posterior quantiles is higher than the credibility level, but a predetermined asymptotic coverage level may be assured by starting with a slightly lower appropriate credibility level. The idea of a projection-posterior for inference is useful beyond the setting of shape-restricted curve estimation. In differential equation models, \cite{bhaumik2015bayesian,bhaumik2017bayesian,bhaumik2017efficient,bhaumik2022two} used  certain distance minimizing maps from the space of smooth functions to the space of the solutions of the differential equation to induce a posterior distribution on the parameters of the differential equation, starting from a random series prior on the regression function using a B-spline basis. They obtained Bernstein-von Mises theorems which, in particular, show that the posterior contracts at the parametric rate and Bayesian credible intervals have the correct asymptotic frequentist coverage. The Bayesian multivariate monotone regression problem was treated by \cite{Kang_reg}  using the projection-posterior approach. They obtained the global posterior contraction rate and a consistent Bayesian test for monotonicity. In the same setting, frequentist coverage of a Bayesian credible interval of the function value at a point using a more general ``immersion-posterior'' approach was studied by \cite{Kang_Coverage}.  

In this paper, we develop a Bayesian method to make inferences on the joint probability density function  of several random variables. Instead of making a global smoothness assumption, we assume that  the density function decreases in each argument. We shall pursue the projection-posterior approach, or use a more general immersion map to induce a posterior distribution from an unrestricted posterior distribution supported on piece-wise constant functions on small hyperrectangles. More precisely, we consider the setting where we observe independent and identically distributed random samples from an unknown probability density supported on $[0,1]^d$ which is monotonically decreasing in each coordinate. To put a prior on the probability density function, we partition $[0,1]^d$ into hyperrectangles and let the density function have constant values on each hyperrectangle. However, instead of imposing the shape constraints, we first put a Dirichlet distribution on the vector of (normalized) step-heights of the piece-wise representation as the prior. By posterior conjugacy, the corresponding unrestricted posterior distribution also follows a Dirichlet distribution on the unit simplex. Since the unrestricted posterior for the density function is not supported within the space of multivariate monotone densities, we comply with this restriction by applying a transformation on the samples from the unrestricted posterior, which maps a density to a multivariate monotone density. 
Depending on the purpose of the inference, we use different transformations. To obtain a global posterior contraction rate with respect to the $\LL_1$-metric, we use an $\LL_1$-projection of a density function onto the space of monotone functions. For a sampled density from the posterior distribution, the monotone projection results in a nonnegative monotone step function, which need not be a density though. This deficiency is rectified by combining it with a renormalization map. We show that the resulting induced posterior from this immersion map obtained by an $\LL_1$-projection and renormalization contracts at the optimal rate provided that the number of bins is chosen optimally. We then use this result to construct a Bayesian test for multivariate monotonicity of the density. We reject the hypothesis of monotonicity if the posterior probability that the distance between the density following the unrestricted posterior and the space of multivariate monotone functions is smaller than an appropriate threshold given by the posterior contraction rate is smaller than a predetermined level. We show that the resulting Bayesian test has size approaching zero, power at any fixed alternative approaching one, and the last assertion holds even for smooth alternative densities approaching the null hypothesis of multivariate monotone functions if they are separated sufficiently well. We further present a variation of the test such that the required separation with the null hypothesis for the power to approach one adapts to the smoothness of the underlying density function. It may be noted that no test, Bayesian or frequentist, seems available to test for multivariate monotonicity of a density in the literature. Finally, we address the important problem of uncertainty quantification of the function value at a given point. In the Bayesian approach, a credible interval, typically based on posterior quantiles is used to quantify the uncertainty about a parameter, but its frequentist coverage may be significantly different. Bayesian credible intervals with asymptotically adequate frequentist coverage are highly desirable. We obtain the asymptotic coverage of a Bayesian credible interval with endpoints at certain quantiles of an immersion-posterior distribution induced by a min-max, or max-min, block operation. We observe that the asymptotic coverage is slightly higher than the credibility level used in the construction. This is similar to the phenomenon observed for the univariate case in \cite{chakraborty2022density}, except that the immersion map is different from the $\LL_2$-projection map. As in the univariate case, the limiting coverage depends only on the credibility level used for a given smoothness level of the density function at the point of interest, not on any nuisance parameter. Hence the correct asymptotic coverage can be achieved by starting with an appropriate lower credibility level. 

The rest of the paper is organized as follows. In Section~\ref{sec:setup}, we introduce the notations, prior, posterior, and the notion of the immersion posterior. We present results on the posterior contraction rate of the immersion posterior and construct a consistent Bayesian test for multivariate monotonicity in Section~\ref{sec:rates tests}. The asymptotic coverage of a credible interval based on an immersion posterior is obtained in Section~\ref{sec:pointwise}. Simulation studies to assess the accuracy of the proposed methods are given in Section~\ref{sec:simulation}. The proofs  are given in Section~\ref{sec:proof} and Appendix \ref{sec:appendix}. 

\section{Setup, prior and posterior}
\label{sec:setup}

{\it Notations}. 
Let bold letters stand for $d$-dimensional vectors throughout the paper, either deterministic or random. The transpose of $\bm{a}$ is denoted by $\bm{a}^\trans$. Let $\RR$, $\RR_{\ge 0}$, and $\RR_{> 0}$ be the sets of real numbers, nonnegative real numbers, and positive real numbers respectively and $\ZZ$, $\ZZ_{\ge 0}$, and $\ZZ_{> 0}$ be the sets of integers, nonnegative integers, and positive integers.
For $(\bj_1,\bj_2)\in \ZZ_{>0}^d\times\ZZ_{>0}^d$, let $[\bj_1:\bj_2] = \{\bj\in \ZZ_{>0}^d: j_{1,k}\le j_k \le j_{2,k}, \text{ for all } 1\le k \le d\}$. Let $\bm{1}$ stand for the $d$-dimensional all-one vector and $\bm{0}$ for the $d$-dimensional all-zero vector. We use the notations $a\vee b = \max\{a, b\}$ and $a\wedge b =\min\{a, b\}$ for $(a, b)\in\RR^2$. Let $\bm{a} \vee \bm{b} = (a_1\vee b_1,\ldots, a_d\vee b_d)^\trans$ and $\bm{a} \wedge \bm{b} = (a_1\wedge b_1,\ldots, a_d\wedge b_d)^\trans$. For $a\in\RR$, $\ceil{a}$ denotes the smallest integer no greater than $a$. $\ceil{\bm{a}}$ denotes $(\ceil{a_1},\ldots,\ceil{a_d})^\trans$.
For the multi-index notations, let $\bl ! = l_1!\cdots l_d!$, $\bx^{\bl}=x_1^{l_1}\cdots x_d^{l_d}$, and $\partial^{\bl}f(\bx) = (\partial^{l_1\cdots l_d}  / \partial x_1^{l_1} \cdots \partial x_d^{l_d})f(\bx)$ for $\bl\in \ZZ_{\ge 0}^d$, $\bx\in\RR^d$, and some function $f$ on $\RR^d$.

For two sequences $a_n$ and $b_n$, we use $a_n = O(b_n)$ and $a_n = o(b_n)$ when $a_n/b_n$ is bounded and $a_n/b_n\to 0$ and $O_{\P}(\cdot)$, $o_{\P}(\cdot)$ are their stochastic versions. The symbols, $\lesssim$, and $\gtrsim$, represent less/greater than or equal to up to a positive constant, while $\asymp$ means being equal in order. We write $a_n\ll b_n$ if $a_n/b_n \to 0$, and $a_n\sim b_n$ when $a_n/b_n\to 1$.

The indicator function of a set $A$ is denoted by $\Ind_A$. 
Let $\LL_p(A)$, $p\ge 1$, and  $\LL_{\infty}(A)$ be the spaces of Lebesgue $p$-integrable and bounded functions on $A$ respectively. For $1\le p \le \infty$,
let $\|\cdot\|_p$ be the $p$-norm and define $d_p(f, \mathcal{S}) = \inf\{\|f-s\|_p:s\in\mathcal{S}\}$, for $f\in \LL_p(A)$ and $\mathcal{S}\subseteq \LL_p(A)$. The notation $d_H(\cdot, \cdot)$ denotes the Hellinger distance between two probability densities. Let $K(p,q)$ and $V(p,q)$ be the Kullback-Leibler divergence and Kullback-Leibler variation from probability density $p$ to probability density $q$. For a semimetric space $(\mathcal{T}, d)$, let $\mathcal{N}(\epsilon, \mathcal{T}, d)$ denote the $\epsilon$-covering number of $\mathcal{T}$.

Let $\N(\mu, \sigma^2)$ denote the normal distribution with mean $\mu$ and variance $\sigma^2$. 
The binomial distribution with parameters $n$ and $p$ is denoted by $\text{Bin}(n,p)$.
The $k$-dimensional Dirichlet distribution with parameters $(\alpha_1,\ldots,\alpha_k)$ is denoted by $\text{Dir}(k; \alpha_1,\ldots,\alpha_k )$. We write $\text{Ga}(a,b)$ to denote the Gamma distribution with shape parameter $a$ and scale parameter $b$. 
Let $\mathcal{L}(\cdot)$ denote the law of a random element.
Equality in distribution is denoted by $=_d$.
Convergences in probability and in distribution are denoted by $\to_{\P}$ and $\rightsquigarrow$, respectively.

We aim to make inferences on the unknown multivariate probability density function $g$ based on the independent and identically distributed (i.i.d.) sample of size $n$.
Specifically we have $n$ $d$-dimensional i.i.d. observations $\bX_1, \ldots, \bX_n$ from an unknown probability distribution $G$ with probability density function $g$ supported on $[0,1]^d$.
We shall use $\DD_n$ to denote the data.
The probability density function $g$ is presumed to be nonincreasing with respect to each variable when fixing the other ones.
Specifically, we define a natural partial ordering on $\RR^d$ in the following way. We say $\bx\succeq\by$ if and only if $x_k\geq y_k$ for all $1\le k \le d$, for $\bx,\by\in \RR^d$ and similarly define $\bx\preceq\by$.
Then $g$ is supposed to be monotonically nonincreasing with respect to the natural partial ordering. We denote the set of multivariate nonincreasing probability density functions as follows,
$
    \cG^* =\{g:[0,1]^d \to \RR_{\geq 0}: g(\bx)\leq g(\by) \text{ if } \bx\succeq\by; \int g = 1 \}.
$
Let 
$\cF^* =\{f:[0,1]^d \to \RR: f(\bx)\leq f(\by) \text{ if } \bx\succeq\by \}$ stand for the set of multivariate nonincreasing functions. Let $\mathcal{H}(\alpha,L)$ stand for the set of H\"{o}lder continuous function class on $[0,1]^d$ with H\"{o}lder smoothness index $\alpha$ and H\"{o}lder constant $L$.

{\it Prior and posterior}. 
The class of piece-wise constant functions with an increasing number of hyperrectangles approximates the class of all integrable functions. Therefore, to put a prior on the density function $g$, we can consider a distribution on piece-wise constant functions. To this end, partition the domain $[0,1]^d$ by splitting the $k$th direction into $J_k$ equal length subintervals, $\{[0,1/J_k], (1/J_k, 2/J_k], \ldots, ((J_k - 1)/J_k,1]\}$, $k=1,\ldots,d$, resulting in $\prod_{k=1}^d J_k$ pieces in total. Let $I_{\bm{1}} = \prod_{k=1}^d [0, 1/J_k]$ and $I_{\bj} = \prod_{k=1}^d ((j_k-1)/J_k, j_k/J_k]$ for $\bj\in [\bm{1}:\bJ]\setminus \{\bm{1}\}$. Then a typical function in the support of the prior can be represented as 
$g = (\prod_{k=1}^d J_k)\sum_{\bj \in [\bm{1}:\bJ]} \theta_{\bj}\Ind_{I_{\bj}}$, 
with a suitable prior on the scaled vector of step-heights $\bm{\theta}:=(\theta_{\bj}: \bj \in [\bm{1}:\bJ])$. Let the set of multivariate functions taking constant value on $I_{\bj}$, $\bj \in [\bm{1}:\bJ] $, be denoted by $\cF_{\bJ}$, 
and the set of multivariate piece-wise constant probability densities by
$\cG_{\bJ}$. These sets of functions with the constraint of multivariate monotonicity are respectively $\cF_{\bJ}^*=\cF^*\cap \cF_{\bJ}$ and $\cG_{\bJ}^*=\cG^*\cap \cG_{\bJ}$. 
For most results in this paper, $J_k$, $k=1,\ldots,d$, can be taken to be deterministic with value dependent on $n$. 

A conjugate prior $\Pi$ on $g$ is given by a Dirichlet distribution on $\bm{\theta}$: for some array of positive numbers $(\alpha_{\bj}:\bj \in [\bm{1}:\bJ])$, 
\begin{align}
\label{formula:dirichletprior}
    (\theta_{\bj}: \bj \in [\bm{1}:\bJ] )\sim \text{Dir}(\prod_{k=1}^d J_k; (\alpha_{\bj}: \bj \in [\bm{1}:\bJ])),
\end{align}
leading to the posterior measure $\Pi(\cdot|\DD_n)$ given by 
\begin{align}
\label{formula:dirichletposterior}
(\theta_{\bj}: \bj \in [\bm{1}:\bJ] )|\DD_n \sim  \mathrm{Dir}(\prod_{k=1}^d J_k; (\alpha_{\bj}+N_{\bj}: \bj \in [\bm{1}:\bJ])), 
\end{align}
where $
N_{\bj} = \sum_{i=1}^n \Ind_{I_{\bj}}(\bX_i)$.

Clearly, neither the prior nor the posterior is supported within the desirable space $\mathcal{G}^*$ of multivariate monotone densities. To comply with the shape restriction, we make an inference based on the posterior distribution $\Pi_n^*:=\Pi_n \circ \iota^{-1}$ induced by an immersion map $\iota$ that transforms a density to a multivariate monotone density. The choice of the immersion map will depend on the application. More specifically, to obtain the posterior contraction rate, we use an immersion map a composition of the $\LL_1$-projection on the space of monotone functions $\mathcal{F}^*$ followed by a renormalization taking in $\mathcal{G}^*$. Indeed, since the immersion map is applied to a sample from the posterior distribution of $g$ which is supported within $\cG_{\bJ}$, it will be observed that the image under the immersion map belongs to $\cG^*_{\bJ}$. Thus the immersion posterior is supported within the space of piece-wise constant multivariate monotone densities. This property has a significant implication that the computation of the immersion posterior consists entirely of a finite-dimensional sampling and a finite-dimensional optimization algorithm. To study the asymptotic frequentist coverage of a Bayesian credible interval, we use a min-max or max-min block operation, to be precisely defined in Section~\ref{sec:pointwise}. In both cases, the choice of the immersion maps is motivated by the desired asymptotic properties.

\section{Contraction rates and testing for multivariate monotonicity}
\label{sec:rates tests}

We first study the posterior contraction rate of the immersion posterior induced by the map $\iota$  mapping $g\in\cG_{\bJ}$ to $g^*\in \cG^*_{\bJ}$ given by $g^*=\tilde g/\int \tilde g$, where $\tilde g\in \argmin \{  \|g-h\|_1 : h\in \cF^*\}$. We observe an important fact below that for any $p\ge 1$, the $\LL_p$-projection of a piece-wise constant function belongs to $\cF^*_{\bJ}$, and hence $\iota(g)\in \cG^*_{\bJ}$ for any $g$ sampled from the posterior.

\begin{lemma}
\label{piece}
For $p\ge 1$ and $s\in \LL_p([0,1]^d)$, let $s_{\bJ} = (\prod_{k=1}^d J_k)\sum_{\bj\in[\bm{1}:\bJ]} b_{\bj} \Ind_{I_{\bj}}$ where $b_{\bj} = \int_{I_{\bj}} s$. Then, 
\begin{itemize}
    \item[{\rm (i)}] for any $h \in \cF_{\bJ}$, $\|s_{\bJ} - h\|_p \le \|s - h\|_p$;
    \item[{\rm (ii)}] if $s\in\cF^*$, then $s_{\bJ} \in \cF_{\bJ}^*$;
    \item[{\rm (iii)}] if $s\in \cG^*$, then $s_{\bJ}\in \cG_{\bJ}^*$.
\end{itemize}
\end{lemma}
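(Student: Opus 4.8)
The plan is to prove the three parts in sequence, resting everything on the single structural observation that, since the cells $I_{\bj}$ all have the common Lebesgue volume $|I_{\bj}| = (\prod_{k=1}^d J_k)^{-1}$, the function $s_{\bJ}$ is nothing but the cell-wise average of $s$: it takes the constant value $\bar s_{\bj} := |I_{\bj}|^{-1}\int_{I_{\bj}} s$ on $I_{\bj}$. Part~(i) is the only one carrying real content and will follow from Jensen's inequality applied cell by cell; parts~(ii) and~(iii) are then elementary.

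For part~(i), I would fix $h\in\cF_{\bJ}$, denote by $c_{\bj}$ its constant value on $I_{\bj}$, and exploit the convexity of $t\mapsto|t-c_{\bj}|^p$: Jensen's inequality with respect to the normalized Lebesgue measure on $I_{\bj}$ gives $|\bar s_{\bj}-c_{\bj}|^p\le|I_{\bj}|^{-1}\int_{I_{\bj}}|s-c_{\bj}|^p$, which after multiplying through by $|I_{\bj}|$ reads $\int_{I_{\bj}}|s_{\bJ}-h|^p\le\int_{I_{\bj}}|s-h|^p$. Summing over $\bj\in[\bm{1}:\bJ]$ and taking $p$th roots then yields $\|s_{\bJ}-h\|_p\le\|s-h\|_p$.

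For part~(ii), membership of $s_{\bJ}$ in $\cF_{\bJ}$ is built into its definition, so only monotonicity needs checking. Since $s_{\bJ}$ is piece-wise constant and the coordinate partitions are ordered, for $\bx\succeq\by$ the cells containing them have indices $\bj\succeq\bj'$, so it suffices to compare cell averages: I would show $\bar s_{\bj}\le\bar s_{\bj'}$ whenever $\bj\succeq\bj'$. The key point is that $I_{\bj}$ is the translate of $I_{\bj'}$ by the vector $\bv$ with $k$th coordinate $(j_k-j'_k)/J_k\ge 0$ (exactly, up to a Lebesgue-null set of boundary points), so that substituting $\bu\mapsto\bu+\bv$ and using $s(\bu+\bv)\le s(\bu)$, valid because $\bu+\bv\succeq\bu$ and $s\in\cF^*$, gives $\int_{I_{\bj}}s = \int_{I_{\bj'}}s(\bu+\bv)\,\d\bu\le\int_{I_{\bj'}}s(\bu)\,\d\bu = \int_{I_{\bj'}}s$; dividing by the common volume gives $\bar s_{\bj}\le\bar s_{\bj'}$. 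Part~(iii) then drops out: if $s\in\cG^*$ then $s\in\cF^*$, so $s_{\bJ}\in\cF_{\bJ}^*$ by~(ii), while $b_{\bj}=\int_{I_{\bj}}s\ge 0$ makes $s_{\bJ}$ nonnegative, and the telescoping identity $\int s_{\bJ}=\sum_{\bj}b_{\bj}=\int_{[0,1]^d}s=1$ shows it integrates to one, so $s_{\bJ}\in\cG_{\bJ}^*$.

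I do not expect a genuine obstacle in this lemma; the only things demanding care are tracking the normalization constant $\prod_{k=1}^d J_k$ through the definition of $s_{\bJ}$, and noting that the half-open convention for the cells makes all boundary overlaps Lebesgue-null, so that the translation identity and the integral identities above are needed only up to null sets, which is harmless.
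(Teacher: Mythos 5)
Your proposal is correct and follows essentially the same route as the paper: Jensen's inequality applied cell by cell for (i), the translation argument comparing $\int_{I_{\bj}} s$ with $\int_{I_{\bj'}} s$ for (ii), and the observation that nonnegativity and total mass are preserved for (iii). The extra care you take with the normalization constant and the Lebesgue-null boundaries is fine but not needed beyond what the paper's proof already implicitly handles.
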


In view of Lemma \ref{piece}, to obtain the monotone $\LL_p$-projection of a piece-wise function $h=\sum_{\bj\in[\bm{1}:\bJ]} \theta_{\bj}\Ind_{I_{\bj}}$, it suffices to isotonize the coefficient vector  $\btheta=(\theta_{\bj}: \bj \in [\bm{1}:\bJ])$ by minimizing $ \sum_{\bj\in[\bm{1}:\bJ]} |c_{\bj} - \theta_{\bj}|^p$ over 
$(c_{\bj}: \bj\in[\bm{1}:\bJ]) \in \mathcal{C}_{\bJ}$, 
where
$\mathcal{C}_{\bJ}=\{(c_{\bj}: \bj\in[\bm{1}:\bJ]) :c_{\bj_1} \le c_{\bj_2}, \text{ if } \bj_1 \succeq \bj_2\}$, the closed and convex monotone cone. The solution to this isotonization problem exists by the convexity of the loss function and the convexity and closeness of the feasible set. 
It is not hard to see that when $\theta_{\bj}\ge 0$ for all $\bj\in[\bm{1}:\bJ]$, the nonnegativity of the isotonization is ensured. However, for $p\ne 2$, the condition $\sum_{\bj\in[\bm{1}:\bJ]} \theta_{\bj}=1$ does not automatically ensure that the isotonized coefficients add up to one. For the case $d=1$ and $p=2$, from the description of the Pool Adjacent Violation Algorithm (cf. \cite{ayerPAVA}), it is clear that the sum of the coefficients does not change, and hence the isotonization of a density $g\in \cG_{\bJ}$ is automatically a density. This can be shown to be true for the multivariate case $d\ge 2$ and $p=2$ and thus makes the renormalization step redundant. Nevertheless, we work with a monotone $\LL_1$-projection as the approximation rate at a monotone function by piece-wise constant functions is not sufficiently strong in terms of the $\LL_p$-metric ($p>1$), and hence a renormalization step following the $\LL_1$-isotonization of the step-heights is necessary to make the monotone projection a valid probability density. An alternative would be to directly consider an $\LL_1$-projection of $g\in \cG_{\bJ}$ onto $\cG^*$, but a convenient computational algorithm for that does not seem to be readily available. This necessitates the use of a more general immersion map instead of a projection used for the multivariate monotone regression problem studied in \cite{Kang_reg}.    

Let $\tilde\btheta=(\tilde{\theta}_{\bj}: \bj\in[\bm{1}:\bJ])$ stand for the isotonization of $ \btheta$ with respect to the $\LL_1$-distance. There may be more than one solution, in which case we may choose any of them. Then the immersion posterior sample will be given by $\theta_{\bj}^{\ast} = \tilde{\theta}_{\bj}/\sum_{\bj\in[\bm{1}:\bJ]}{\tilde{\theta}_{\bj}}$ for all $ \bj\in[\bm{1}:\bJ]$. 
For the rest of the section, for all $k=1,\ldots,d$, we let $J_k=J$ for some $J$.  
The following result gives the posterior contraction rate of the resulting immersion posterior.

\begin{theorem}
\label{thm:contraction} 
Let the true density $g_0\in\cG^*$. If $1\ll J^d \ll n$ and $a_0\le \alpha_{\bj} \le a_1$
for all $\bj\in[\bm{1}:\bJ]$ and some $a_0,a_1>0$, then $\E_0\Pi(\|g^{\ast}-g_0\|_1>M_n \epsilon_n|\DD_n) \to 0$ for any $M_n \to \infty$, where $\epsilon_n=\max\{J^{-1}, \sqrt{J^d/n}\}$.
The optimal rate $n^{-1/(2+d)}$ is achieved by choosing $J\asymp n^{1/(2+d)}$.
\end{theorem}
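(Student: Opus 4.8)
The plan is to decompose the $\LL_1$-distance $\|g^{\ast}-g_0\|_1$ into three contributions and to control each under the unrestricted posterior. Write $g_{0,\bJ}$ for the piece-wise constant projection of $g_0$ onto $\cF_{\bJ}$ as in Lemma \ref{piece}, so that $g_{0,\bJ}\in\cG^*_{\bJ}$ by part (iii). The triangle inequality gives
\begin{align*}
\|g^{\ast}-g_0\|_1 \le \|g^{\ast}-\tilde g\|_1 + \|\tilde g - g_{0,\bJ}\|_1 + \|g_{0,\bJ}-g_0\|_1,
\end{align*}
where $\tilde g$ is the (unnormalized) $\LL_1$-isotonization of a posterior draw $g$. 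First I would handle the deterministic approximation term $\|g_{0,\bJ}-g_0\|_1$: since $g_0$ is coordinatewise monotone (hence of bounded variation in a suitable sense), a standard argument bounding the oscillation of a monotone function over each bin $I_{\bj}$ by a telescoping sum shows $\|g_{0,\bJ}-g_0\|_1 \lesssim J^{-1}$, matching the first term in $\epsilon_n$. Second, for the middle term, I would use that $g_{0,\bJ}\in\cF^*$ together with the defining property of the $\LL_1$-projection: $\|\tilde g - g\|_1 = d_1(g,\cF^*) \le \|g - g_{0,\bJ}\|_1$, so by the triangle inequality $\|\tilde g - g_{0,\bJ}\|_1 \le 2\|g-g_{0,\bJ}\|_1 \le 2(\|g-g_0\|_1 + \|g_0-g_{0,\bJ}\|_1)$. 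Thus everything reduces to (a) controlling $\|g - g_0\|_1$ under the unrestricted Dirichlet posterior, and (b) controlling the renormalization term $\|g^{\ast}-\tilde g\|_1 = |\,1 - \int\tilde g\,|\,\|g^{\ast}\|_1 = |1 - \int \tilde g|$.

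For step (a), the unrestricted posterior on $\btheta$ is $\mathrm{Dir}(\prod J_k;\ \alpha_{\bj}+N_{\bj})$, a completely explicit conjugate object. The cleanest route is to note $\|g-g_0\|_1 \le \|g - g_{0,\bJ}\|_1 + \|g_{0,\bJ}-g_0\|_1$, and then $\|g-g_{0,\bJ}\|_1 = \sum_{\bj}|\theta_{\bj} - \theta_{0,\bj}|$ where $\theta_{0,\bj}=\int_{I_{\bj}}g_0$. Since $\sum_{\bj}\mathrm{Var}(\theta_{\bj}\mid\DD_n)\lesssim (\sum_{\bj}(\alpha_{\bj}+N_{\bj}))^{-1} \lesssim n^{-1}$ (using the boundedness $a_0\le\alpha_{\bj}\le a_1$ and $\sum_{\bj}N_{\bj}=n$) and the posterior mean of $\theta_{\bj}$ is within $O(J^d/n)$ of $N_{\bj}/n$, a Cauchy--Schwarz bound over the $J^d$ coordinates gives
\begin{align*}
\E_0\, \E\!\left[\sum_{\bj}|\theta_{\bj}-\theta_{0,\bj}|\ \Big|\ \DD_n\right] \lesssim \sqrt{J^d}\cdot\sqrt{J^d/n} = \sqrt{J^{2d}/n},
\end{align*}
which is too crude; instead I would split $\sum_{\bj}|\theta_{\bj}-N_{\bj}/n|$ (controlled in $\LL_1$-posterior-expectation by $\sqrt{J^d\cdot \E[\sum_{\bj}\mathrm{Var}(\theta_{\bj}|\DD_n)]}\lesssim \sqrt{J^d/n}$) from the empirical fluctuation $\E_0\sum_{\bj}|N_{\bj}/n - \theta_{0,\bj}| \lesssim \sqrt{J^d/n}$ by the same Cauchy--Schwarz-plus-binomial-variance computation. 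Combining, $\E_0\E[\|g-g_0\|_1\mid\DD_n] \lesssim J^{-1} + \sqrt{J^d/n} \asymp \epsilon_n$, and Markov's inequality upgrades this to the stated posterior-probability statement for any $M_n\to\infty$.

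For step (b), the renormalization constant: since $\tilde\theta_{\bj}\ge 0$ and $\int\tilde g = \sum_{\bj}\tilde\theta_{\bj}$, write $|1-\sum_{\bj}\tilde\theta_{\bj}| = |\sum_{\bj}(\tilde\theta_{\bj}-\theta_{0,\bj})|$ (using $\sum_{\bj}\theta_{0,\bj}=1$) $\le \sum_{\bj}|\tilde\theta_{\bj}-\theta_{0,\bj}| = \|\tilde g - g_{0,\bJ}\|_1$, which we already bounded by $O(\epsilon_n)$ in posterior expectation via step (a). Hence all three terms in the decomposition are $O_{\P}(\epsilon_n)$ in the appropriate averaged sense, giving the contraction rate; plugging $J\asymp n^{1/(2+d)}$ balances $J^{-1}$ against $\sqrt{J^d/n}$ and yields $\epsilon_n\asymp n^{-1/(2+d)}$, which is the minimax rate for $d$-variate monotone densities. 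I expect the main obstacle to be making the $\LL_1$-control of the Dirichlet posterior sharp rather than losing a factor of $\sqrt{J^d}$: one must exploit that $\sum_{\bj}N_{\bj}=n$ is fixed so that the $N_{\bj}/n$ behave like a single multinomial proportion vector with total variance $O(1/n)$, not $J^d$ independent pieces each of variance $O(1/n)$. A secondary technical point is verifying that the $\LL_1$-projection is nonexpansive in the sense $\|\tilde g - g_{0,\bJ}\|_1\le 2\|g-g_{0,\bJ}\|_1$ when the target $g_{0,\bJ}$ lies in $\cF^*$ but is not itself the projection of $g$; this follows from $d_1(g,\cF^*)\le\|g-g_{0,\bJ}\|_1$ and the triangle inequality, and does not require any nonexpansiveness of the (possibly set-valued) projection map itself.
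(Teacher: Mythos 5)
Your proposal is correct and follows essentially the same route as the paper: reduce $\|g^{\ast}-g_0\|_1$ to a constant multiple of $\|g-g_{0,\bJ}\|_1$ plus the $O(J^{-1})$ approximation bias via the projection and renormalization inequalities, then control the Dirichlet posterior by a variance--bias computation and Markov's inequality (your coordinatewise Cauchy--Schwarz is literally the paper's step of dominating $\|\cdot\|_1$ by $\|\cdot\|_2$ on $[0,1]^d$, and the $\sqrt{J^{2d}/n}$ loss you feared never arises --- the correct Cauchy--Schwarz already gives $\sqrt{J^d}\cdot\sqrt{1/n}=\sqrt{J^d/n}$). The only cosmetic difference is that the paper centers the reduction at $g_0$ itself rather than at $g_{0,\bJ}$, using $\|\tilde g-g\|_1\le\|g-g_0\|_1$ and $\|g^{\ast}-\tilde g\|_1\le\|\tilde g-g_0\|_1$ to conclude $\|g^{\ast}-g_0\|_1\le 4\|g-g_0\|_1$ outright.
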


Next, we construct a Bayesian test for multivariate monotonicity analogous to that in \cite{Kang_reg} for the multivariate monotone regression problem.

\begin{theorem}
\label{thm:test}
Let $\phi_n=\Ind \{\Pi(d_1(g, \cF^*) \leq M_n n^{-1/(2+d)}|\DD_n) < \gamma\}$, where $M_n$ is a slowly growing sequence and $\gamma\in(0,1)$ is a predetermined constant. If $J\asymp n^{1/(2+d)}$, then we have
\begin{enumerate}
    \item[{\rm (i)}] for any fixed $g_0\in\cG^*$, $\E_0(\phi_n) \to 0$;
    \item[{\rm (ii)}] for any fixed $g_0$ not in the $\LL_1$-closure of $\cG^*$, $\E_0(1-\phi_n) \to 0$;
    \item[{\rm (iii)}] for any $\alpha\in (0,1]$ and any $L>0$, we have 
    $\sup\{ \E_0 (1-\phi_n): g_0 \in \mathcal{H}(\alpha,L), d_1(g_0, \cG^*)>\rho_n(\alpha)\} \to 0$, 
    where
    \begin{align*}
        \rho_n(\alpha) = \left\{
        \begin{matrix}
        C n^{-\alpha/(2+d)}, & \text{ for some $C>0$ if $\alpha < 1$}, \\
        CM_n n^{-1/(2+d)}, & \text{ for any $C>2$ if $\alpha = 1$}.
        \end{matrix}
        \right.
    \end{align*}
\end{enumerate}
\end{theorem}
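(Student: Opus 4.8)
The plan is to reduce all three parts to the contraction result of Theorem \ref{thm:contraction} together with control of the distance $d_1(g,\cF^*)$ for $g$ drawn from the unrestricted Dirichlet posterior. The key quantity is $d_1(g,\cF^*)$, which by Lemma \ref{piece} equals the $\LL_1$-distance from the step-height vector $\btheta$ to the monotone cone $\mathcal{C}_{\bJ}$ (up to the normalizing factor $\prod_k J_k$), i.e.\ $d_1(g,\cF^*)=\|\tilde g - g\|_1$ where $\tilde g$ is the $\LL_1$-isotonization. For part (i), I would write $d_1(g,\cF^*)\le \|g-g^*\|_1 \le \|g - g_0\|_1 + \|g_0 - g^*\|_1$; since $g_0\in\cG^*\subseteq\cF^*$ and the immersion map $\iota$ is (essentially) an $\LL_1$-contraction toward $\cF^*$, both pieces are $O_{\P_0}(\epsilon_n)$ on a posterior set of probability $\to 1$, by Theorem \ref{thm:contraction}. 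Hence for $M_n$ slowly growing, $\Pi(d_1(g,\cF^*)\le M_n n^{-1/(2+d)}\mid \DD_n)\to_{\P_0} 1 > \gamma$ eventually, so $\E_0(\phi_n)\to 0$. One must be slightly careful that the threshold $M_n n^{-1/(2+d)}$ dominates the contraction rate $\epsilon_n$ at the optimal $J\asymp n^{1/(2+d)}$: indeed $\epsilon_n\asymp n^{-1/(2+d)}$, so the slow growth of $M_n$ gives the needed room.

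For part (ii), suppose $g_0$ is at positive $\LL_1$-distance $\delta:=d_1(g_0,\cF^*)>0$ from the closure of $\cG^*$ (note $d_1(g_0,\cF^*)=d_1(g_0,\cG^*)$ since renormalization does not increase $\LL_1$-distance to a density; this equivalence should be recorded as a small lemma). The triangle inequality gives, for any $g$ in the support of the posterior, $d_1(g,\cF^*)\ge d_1(g_0,\cF^*) - \|g-g_0\|_1 \ge \delta - \|g-g_0\|_1$. Now I need that $\|g-g_0\|_1$ is small with high posterior probability even when $g_0\notin\cG^*$. This does not follow from Theorem \ref{thm:contraction} (which assumes $g_0\in\cG^*$), so the key step is a separate \emph{unrestricted} posterior contraction statement: the Dirichlet posterior on $\cG_{\bJ}$ concentrates around the $\LL_1$-projection $g_{0,\bJ}$ of $g_0$ onto $\cG_{\bJ}$ at rate $\sqrt{J^d/n}$, which $\to 0$. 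This is a standard finite-dimensional Dirichlet-posterior concentration argument (Schwartz-type with parametric entropy, or a direct variance computation), and since $\|g_0 - g_{0,\bJ}\|_1\to 0$ as $J\to\infty$, we get $\|g-g_0\|_1=o_{\P_0}(1)$ on a set of posterior probability $\to 1$. Therefore $d_1(g,\cF^*)\ge \delta/2$ eventually on that set, so $\Pi(d_1(g,\cF^*)\le M_n n^{-1/(2+d)}\mid\DD_n)\to_{\P_0} 0 < \gamma$, giving $\E_0(1-\phi_n)\to 0$.

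Part (iii) is the quantitative version of (ii) and is where the main work lies. Here $g_0\in\mathcal{H}(\alpha,L)$ need not be monotone, but it is bounded and smooth, so the binning bias is controlled: $\|g_0 - g_{0,\bJ}\|_1\lesssim J^{-(\alpha\wedge 1)}\lesssim n^{-(\alpha\wedge 1)/(2+d)}$ at the chosen $J$. Combining the unrestricted contraction $\|g - g_{0,\bJ}\|_1 = O_{\P_0}(\sqrt{J^d/n}) = O_{\P_0}(n^{-1/(2+d)})$ with this bias bound, $\|g - g_0\|_1 \le C' n^{-(\alpha\wedge 1)/(2+d)}$ (with an extra $M_n$ factor when $\alpha=1$) with posterior probability $\to 1$, uniformly over $\mathcal{H}(\alpha,L)$ — the uniformity is crucial and comes from the fact that the bias bound and the posterior-variance bound depend on $g_0$ only through $L$ and $\|g_0\|_\infty$, the latter itself bounded in terms of $L$ on $[0,1]^d$. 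Then if $d_1(g_0,\cG^*)=d_1(g_0,\cF^*)>\rho_n(\alpha)$ with $\rho_n(\alpha)$ chosen to exceed $C' n^{-\alpha/(2+d)}$ by a factor (resp.\ exceed $C M_n n^{-1/(2+d)}$ with $C>2$ when $\alpha=1$) plus the threshold $M_n n^{-1/(2+d)}$, the triangle inequality forces $d_1(g,\cF^*) > M_n n^{-1/(2+d)}$ on that posterior set, so the posterior probability in $\phi_n$ is $o_{\P_0}(1)$ uniformly, yielding the claim. The main obstacle is establishing the uniform-over-$\mathcal{H}(\alpha,L)$ unrestricted posterior contraction with the sharp bias exponent $\alpha\wedge 1$ and pinning down the constants in $\rho_n(\alpha)$ so that the $\alpha=1$ borderline ($C>2$) comes out correctly — this requires tracking the interplay between the $M_n$ in the test threshold and the $\sqrt{J^d/n}$ fluctuation, and verifying that the renormalization step in the immersion map does not inflate distances, which is the content of the auxiliary identity $d_1(g_0,\cF^*)=d_1(g_0,\cG^*)$ referenced above.
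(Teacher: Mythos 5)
Your overall route is the same as the paper's: reduce (i) to the bound $d_1(g,\cF^*)\le\|g-g_0\|_1$ (your detour through $g^*$ is harmless but unnecessary, since $g_0\in\cF^*$ already gives this directly) together with the unrestricted contraction established inside the proof of Theorem~\ref{thm:contraction}; for (ii) use the triangle inequality $d_1(g,\cF^*)\ge d_1(g_0,\cF^*)-\|g-g_0\|_1$ plus posterior consistency at a non-monotone $g_0$ via the martingale convergence of $g_{0,J}$ and the variance bound \eqref{eq:posterior variation}, which indeed does not require monotonicity; and for (iii) the H\"older bias bound $\|g_0-g_{0,J}\|_1\lesssim J^{-\alpha}$ uniformly over $\mathcal{H}(\alpha,L)$. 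All of that matches the paper.

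The one genuine gap is the identity $d_1(g_0,\cF^*)=d_1(g_0,\cG^*)$ that you invoke in (ii) and lean on for the constants in (iii). Renormalization is \emph{not} an $\LL_1$-contraction in the sense you need: if $f\in\cF^*$ is within $\eta$ of $g_0$, the natural candidate $\bar f=f_+/\|f_+\|_1\in\cG^*$ satisfies only $\|\bar f-g_0\|_1\le\|\bar f-f_+\|_1+\|f_+-g_0\|_1\le|1-\|f_+\|_1|+\|f_+-g_0\|_1\le 2\|f-g_0\|_1$, so what one gets is the two-sided bound $d_1(g_0,\cF^*)\le d_1(g_0,\cG^*)\le 2\,d_1(g_0,\cF^*)$, not equality. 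This factor of $2$ is not a technicality: it is exactly why the theorem requires $C>2$ in the borderline case $\alpha=1$. From $d_1(g_0,\cG^*)>CM_nn^{-1/(2+d)}$ one can only conclude $d_1(g_0,\cF^*)\ge (C/2)M_nn^{-1/(2+d)}$, and then $d_1(g,\cF^*)\ge d_1(g_0,\cF^*)-\|g-g_0\|_1$ exceeds the test threshold $M_nn^{-1/(2+d)}$ only when $\|g-g_0\|_1<(C/2-1)M_nn^{-1/(2+d)}$, which the contraction rate delivers precisely because $C/2-1>0$. Since you explicitly defer this constant tracking as ``the main obstacle'' and your premise for resolving it (the equality) is false, the $\alpha=1$ case of (iii) is not actually established as written; replacing the equality by the factor-$2$ inequality, proved by the positive-part-plus-renormalization construction above, closes the gap and recovers the paper's argument.
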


The universal consistency against any fixed non-monotone alternative density is appealing. Nevertheless, 
the separation rate $n^{-\alpha/(2+d)}$ of the alternative density from the null hypothesis of multivariate monotonicity assumed above for power to approach one is slower than the optimal posterior contraction rate $n^{-\alpha/(2\alpha+d)}$ of H\"older functions of smoothness index $\alpha$. This is because the optimal posterior contraction rate for this class using piece-wise constant functions is obtained only by choosing  $J\asymp n^{1/(2\alpha+d)}$, while the choice $J\asymp n^{1/(2+d)}$ used to construct the test leads to the suboptimal contraction rate $n^{-\alpha/(2+d)}$. However, the assumed choice is essential to obtain the optimal posterior contraction rate at multivariate monotone functions since otherwise, at some monotone true density, the posterior will not concentrate within its $M_n n^{-1/(2+d)}$-neighborhood, and hence the size of the Bayesian test will tend to one. The problem can be rectified by also putting an appropriate prior on $J$ but at the expense of higher computational complexity. Therefore, fixed $J$ can not give rise to the optimal separation rate for some classes of smooth functions, but with a random $J$, the required separation rate adapts optimally with $\alpha$ within a logarithmic factor.

\begin{theorem}
\label{thm:testingadaptive}
Let $J$ be endowed with a prior $\pi$ such that
    $e^{-b_1J^d\log J} \le \pi(J) \le e^{-b_2 J^d \log J}$ 
for some $b_1$ and $b_2>0$, and suppose that $a_2 n^{-a_3} \le \alpha_{\bj} \le a_1$, for some constants $a_1, a_2$, and $a_3>0$. Consider the test $\phi_n = \Ind \{ \Pi(d_1(g, \cF^*) \leq M_0 \sqrt{(J^d\log n)/n} |\DD_n) < \gamma \}$ for testing the hypothesis of multivariate monotonicity of $g$, where $M_0$ is a sufficiently large constant and $\gamma\in (0,1)$ is predetermined. 
Then 
\begin{enumerate}
    \item[{\rm (i)}] for fixed $g_0\in\cG^*$ and $g_0$ bounded away from zero, $\E_0(\phi_n) \to 0$;
    \item[{\rm (ii)}] for fixed density $g_0$ bounded away from zero and not belonging to the $\LL_1$-closure of $\cG^*$, $\E_0(1-\phi_n) \to 0$; 
    \item[{\rm (iii)}] there exists a sufficiently large constant $C>0$ depending only on $\alpha$, $L$, and $d$, such that $\sup\{\E_0(1-\phi_n): g_0 \in \mathcal{H}(\alpha, L), g_0\ge l >0, d_1(g_0,\cG^*) > C(n/\log n)^{-\alpha/(2\alpha + d)}\} \to 0$.
\end{enumerate}
\end{theorem}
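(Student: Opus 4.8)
The plan is to adapt the argument of Theorem \ref{thm:test} to the hierarchical prior on $J$, with the key new ingredient being control of the posterior over the ``oversmoothing'' values $J$ that are too small to capture the true density. Throughout, write $\epsilon_{n,J}=\max\{J^{-1},\sqrt{(J^d\log n)/n}\}$ for the $J$-dependent rate and note that the threshold in the test, $M_0\sqrt{(J^d\log n)/n}$, dominates the stochastic part of $\epsilon_{n,J}$. The decomposition I would use is to split the posterior mass according to the value of $J$: for a reference value $J^\star\asymp (n/\log n)^{1/(2\alpha+d)}$ (for part (iii); $J^\star\asymp (n/\log n)^{1/(2+d)}$ for parts (i)--(ii)), write $\Pi(\cdot\mid\DD_n)=\sum_{J< J^\star}+\sum_{J\ge J^\star}$ and argue that each piece behaves well.

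\emph{Part (i) (size).} For fixed monotone $g_0$ bounded away from zero, I would first show the posterior for $J$ concentrates on values $J\gtrsim$ some slowly growing sequence and $J\lesssim (n/\log n)^{1/d}$ up to logarithmic factors, using the prior bounds $e^{-b_1J^d\log J}\le\pi(J)\le e^{-b_2J^d\log J}$ together with standard evidence (marginal likelihood) estimates for the Dirichlet–multinomial model: the log-evidence ratio between $J$ and a near-optimal $J^\star$ is controlled by the approximation bias $J^{-2}$ of $g_0$ by $\cG^*_{\bJ}$-functions against the complexity penalty $J^d\log J$, exactly the bias–variance tradeoff that makes $J^\star$ optimal. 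On the event that $J$ lies in this range, Lemma \ref{piece} gives $d_1(g,\cF^*)\le \|g-g_{0,\bJ}\|_1+d_1(g_{0,\bJ},\cF^*)=\|g-g_{0,\bJ}\|_1$ since $g_{0,\bJ}\in\cF^*_{\bJ}$, and a Dirichlet concentration bound (as in the proof of Theorem \ref{thm:contraction}) shows $\|g-g_{0,\bJ}\|_1\le M_0\sqrt{(J^d\log n)/n}$ with posterior probability tending to one; hence $\Pi(d_1(g,\cF^*)\le M_0\sqrt{(J^d\log n)/n}\mid\DD_n)\to 1>\gamma$ and $\phi_n=0$ eventually. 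The requirement $g_0\ge l>0$ enters in controlling the Dirichlet parameters $\alpha_{\bj}+N_{\bj}$ from below (so the $N_{\bj}$ are of order $n/J^d$ uniformly) and in the lower prior-mass bound needed for the evidence comparison; the condition $\alpha_{\bj}\ge a_2 n^{-a_3}$ ensures these priors are not too flat to spoil the argument.

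\emph{Part (ii) (consistency at fixed non-monotone $g_0$).} Here $d_1(g_0,\cF^*)=:2\delta>0$. I would show that for every $J$ in the effective posterior range, $d_1(g,\cF^*)\ge d_1(g_{0,\bJ},\cF^*)-\|g-g_{0,\bJ}\|_1$, and that $d_1(g_{0,\bJ},\cF^*)\to d_1(g_0,\cF^*)$ as $J\to\infty$ (continuity of $\LL_1$-projection under the piecewise-constant approximation, via Lemma \ref{piece}(i) and dominated convergence), so $d_1(g_{0,\bJ},\cF^*)\ge\delta$ for $n$ large; combined with $\|g-g_{0,\bJ}\|_1=o_{\P}(1)$ (posterior consistency in $\LL_1$ uniformly over the $J$-range) and the threshold $M_0\sqrt{(J^d\log n)/n}\to 0$, we get $\Pi(d_1(g,\cF^*)\le M_0\sqrt{(J^d\log n)/n}\mid\DD_n)\to 0<\gamma$, so $\phi_n=1$ eventually. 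The only subtlety is uniformity over $J$: one absorbs the small-$J$ and large-$J$ tails using the prior bounds and crude evidence estimates, since on those tails the posterior mass is negligible.

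\emph{Part (iii) (adaptive power).} For $g_0\in\mathcal{H}(\alpha,L)$, $g_0\ge l$, with $d_1(g_0,\cG^*)>C(n/\log n)^{-\alpha/(2\alpha+d)}$, the key point is that the posterior for $J$ now concentrates near $J^\star\asymp(n/\log n)^{1/(2\alpha+d)}$: the bias of approximating an $\alpha$-Hölder density by $\cG_{\bJ}$ is of order $J^{-\alpha}$, so the evidence is maximized (up to the log factor) at this $J^\star$, at which $\epsilon_{n,J^\star}\asymp(n/\log n)^{-\alpha/(2\alpha+d)}$. On the concentration event one has, for the relevant $J$, $d_1(g,\cF^*)\ge d_1(g_0,\cF^*)-\|g-g_{0}\|_1\gtrsim d_1(g_0,\cG^*)-O(\epsilon_{n,J})$ — here I would use $|d_1(g_0,\cF^*)-d_1(g_0,\cG^*)|$ is controlled because $g_0$ is bounded away from zero (so its $\cF^*$-projection, once renormalized, stays a density and the renormalization costs only a lower-order term), and the contraction of Theorem \ref{thm:contraction} (at the $J$-dependent rate, with the $\log n$ inflation from the random-$J$ prior) gives $\|g-g_0\|_1\le M_0'\epsilon_{n,J}$ with posterior probability $\to 1$. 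Choosing $C$ large enough that $C(n/\log n)^{-\alpha/(2\alpha+d)}$ exceeds $(M_0+M_0')\epsilon_{n,J^\star}$ forces $d_1(g,\cF^*)>M_0\sqrt{(J^d\log n)/n}$ on the bulk of the posterior, so $\Pi(\cdots\mid\DD_n)<\gamma$ and $\phi_n=1$; the supremum over $\mathcal{H}(\alpha,L)$ is handled because all constants depend only on $\alpha,L,d,l$.

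\emph{Main obstacle.} The crux is the uniform-in-$J$ control of the posterior: establishing that $\Pi(J\mid\DD_n)$ puts asymptotically all its mass on a window around the $\alpha$-dependent optimal $J^\star$ (with width absorbed by the $\log n$ factor), and that on this window the contraction and bias estimates hold \emph{simultaneously} with the required uniformity. This requires careful evidence (marginal-likelihood) bounds for the Dirichlet–multinomial model — lower bounds via the prior mass near $g_0$ using the $\alpha_{\bj}\ge a_2n^{-a_3}$ condition, and upper bounds via a testing/entropy argument showing models with $J$ far from $J^\star$ are down-weighted — and is where the hypotheses $g_0\ge l>0$ and the precise prior tail rates $e^{-b_iJ^d\log J}$ are used in an essential way. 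Once this is in place, parts (i)--(iii) follow by repeating the fixed-$J$ arguments of Theorems \ref{thm:contraction} and \ref{thm:test} on the good event.
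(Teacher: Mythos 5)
Your proposal is correct in outline and rests on the same core mechanism as the paper's proof: the observation that $g_{0,J}\in\cF^*$ whenever $g_0\in\cG^*$ (so $d_1(g,\cF^*)\le\|g-g_{0,J}\|_1$ for \emph{every} $J$), a Dirichlet posterior concentration bound made uniform over $J$, control of the posterior of $J$ via prior-mass/marginal-likelihood estimates, and the triangle inequality $d_1(g,\cF^*)\ge d_1(g_0,\cF^*)-\|g-g_0\|_1$ combined with the adaptive contraction rate for the alternatives. The one structural difference is that you route the argument through a \emph{two-sided} concentration of $\Pi(J\mid\DD_n)$ in a window around an optimal $J^\star$, whereas the paper proves only the one-sided tail bound $\Pi(J>J_n\mid\DD_n)\to_{\P_0}0$ (via the Dirichlet small-ball estimate and Theorem 8.20 of Ghosal--van der Vaart) and then establishes the Dirichlet concentration simultaneously for all $J\le J_n$ using Bennett's inequality. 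This matters in two ways. For part (i) the lower bound on $J$ you invoke is simply unnecessary: a coarse $J$ makes $g_{0,J}$ a worse approximation of $g_0$ but it is still monotone, so the test statistic $d_1(g,\cF^*)$ remains small and no bias term ever enters. For parts (ii)--(iii), small $J$ must indeed be neutralized, but the paper does this implicitly through the global $\LL_1$-contraction of the posterior around $g_0$ (densities living on too-coarse partitions cannot be $\epsilon_n$-close to $g_0$, so they carry vanishing posterior mass) rather than through a separate model-selection statement about $\Pi(J\mid\DD_n)$. A genuine two-sided concentration of the posterior of $J$ --- in particular, a sharp lower bound on where its mass sits --- is considerably harder to establish from the stated prior conditions and is the weakest link in your sketch; the paper's one-sided formulation deliberately avoids having to prove it. With that substitution your argument matches the paper's.
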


\section{Coverage of pointwise credible intervals}
\label{sec:pointwise}

To obtain a Bayesian credible interval for $g(\bx_0)$ at an interior point $\bx_0\in (0,1)^d$ with an asserted frequentist coverage, we use quantiles of immersion posteriors induced by the immersion maps via the max-min (or min-max) operation over blocks containing $\bx_0$. The immersion map is partly inspired by the operation used in the estimator of the monotone function value at a point proposed by \cite{Fokianos2020}. The asymptotic distribution of such block estimators was studied by  \cite{han2020} for the multivariate monotone regression problem. We drew on their method in the proof of the following theorem. 

Throughout this section, $\bJ$ is taken as deterministic and changing with the sample size. We allow $J_k$ to take different values adapting to the different local smoothness levels along each coordinate.
Let $\bj_0 = \ceil{\bx_0 \circ \bJ}$ so that $\bx_0 \in I_{\bj_0}$. Since $\bx_0$ is arbitrary so far, we can define an immersion map $\iota$ by the value of $\iota (g)$ at $\bx_0$ for any $g = (\prod_{k=1}^d J_k)\sum_{\bj \in [\bm{1}:\bJ]} \theta_{\bj}\Ind_{I_{\bj}}$
with $\btheta$ from the the unit simplex in $\RR^{\prod_{k=1}^d J_k}$. Specifically, we consider the min-max immersion map $\overline{\iota}$ defined by $\overline{\iota}(g)(\bx_0)=\theta_{\bj_0}^{\star}$, where 
\begin{align}
\label{minmaxmap}
    \theta_{\bj_0}^{\star} = \min_{\bj_1 \preceq \bj_0} \max_{\bj_0 \preceq \bj_2} \frac{\sum_{\bj \in[\bj_1:\bj_2]}\theta_{\bj}}{\prod_{k=1}^{d} (j_{2,k} - j_{1,k} + 1)}.
\end{align}
It is clear that $\theta_{\bj_0}^{\star}$ is uniquely defined although there can be multiple pairs of $\bj_2$ and $\bj_1$ maximizing and minimizing the average of ${\theta_{\bj}}$ in the last display.
It is not hard to see that $\btheta^\star:=(\theta_{\bj}^{\star}: \bj \in [\bm{1}:\bJ]) \in \mathcal{C}_{\bJ}$.
If we switch the order of maximization and minimization in the transformation in \eqref{minmaxmap}, we obtain the max-min immersion map $\underline{\iota}$ defined by $\underline{\iota}(g)(\bx_0)=\theta_{\bj_0}^{\dagger}$, where 
\begin{align}
\label{maxminmap}
    \theta^{\dagger}_{\bj} = \max_{\bj \preceq \bj_2} \min_{\bj_1 \preceq \bj}  \frac{\sum_{\bj\in[\bj_1:\bj_2]}\theta_{\bj}}{\prod_{k=1}^{d} (j_{2,k} - j_{1,k} + 1)}.
\end{align}
It again follows that $\btheta^\dagger:=(\theta_{\bj}^{\star}: \bj \in [\bm{1}:\bJ]) \in \mathcal{C}_{\bJ}$, 
but $\btheta^{\dagger}$ is possibly different from $\btheta^{\star}$. As both operations result in monotone outcomes, the average immersion map $(\overline{\iota}+\underline{\iota})/2$ will also map piece-wise constant functions to monotone piece-wise constant functions. For a given $\bx_0$, we shall obtain asymptotic frequentist coverage of Bayesian credible intervals of $g(\bx_0)$ based on the quantiles of its immersion posterior distribution using the immersion maps $\overline{\iota}$, $\underline{\iota}$ and $(\overline{\iota}+\underline{\iota})/2$. 

We shall make the following assumptions on the prior and the true probability density function.

\begin{assumption}
\label{assumption:priors}
The parameters of the prior distribution satisfy $\max_{\bj}\alpha_{\bj} \leq a_1$ for some positive constant $a_1$.
\end{assumption}

\begin{assumption}
\label{assumption:localsmoothness}
The true density $g_0$ is a coordinate-wise nonincreasing function on $[0,1]^d$. $g_0$ is continuously differentiable in a small neighborhood of $\bx_0 \in (0, 1)^d$. For every $1\le k \le d$, there exists some $\eta_k \in \ZZ_{>0}$ such that $\partial_k^{m_k}g_0(\bx_0) = 0$ for $m_k = 1, \ldots, \eta_k - 1$ and $\partial_k^{\eta_k}g_0(\bx_0) \neq 0$, and 
for any $t>0$, with $M_0 = \{\bmm\in \ZZ_{\ge 0}^d : \sum_{k=1}^d m_k/\eta_k \le 1\}$,  
\begin{align*}
    \sup\big\{\big|g_0(\bx) -\sum_{\bmm \in M_0} \frac{\partial^{\bmm}g_0(\bx_0)}{\bmm !}(\bx - \bx_0)^{\bmm}\big| : |\bx-\bx_0| \preceq t \bm{r}_n\big\} = o(\omega_n), 
\end{align*}
where 
$\omega_n\downarrow 0$ and $\bm{r}_n = (\omega_n^{1/\eta_1}, \ldots, \omega_n^{1/\eta_d})^\trans$. \end{assumption}

Assumption \ref{assumption:localsmoothness} is adapted from \cite{han2020} and some unique features, essential for the proof, of multivariate monotone functions follow from this assumption, see Lemma 1 of \cite{han2020}.
Assumption \ref{assumption:localsmoothness} holds generally when $g_0$ has smoothness of order $\max \{\eta_k: 1\le k\le d\}$ at $\bx_0$.

To state the theorems on limiting coverage, we introduce some stochastic processes. Let  $H_1(\bu, \bv)$ and $H_2(\bu, \bv)$ be two independent centered Gaussian processes
indexed by $(\bu,\bv)\in \RR^d_{\ge 0}\times \RR^d_{\ge 0}$
with the covariance structure  
\begin{align*}
    \text{Cov}(H_i(\bu,\bv),H_i(\bu',\bv')) = \prod_{k=1}^d (u_k\wedge u'_k + v_k\wedge v'_k),  
\end{align*}
for $(\bu,\bv),(\bu',\bv') \in \RR_{\geq 0}^d \times \RR_{\geq 0}^d$ and $i=1,2$. Then define a Gaussian process by the relation
\begin{align*}
    U(\bu, \bv)  = \frac{\sqrt{g_0(\bx_0)}H_1(\bu, \bv)}{\prod_{k=1}^d(u_k+v_k)} + \frac{\sqrt{g_0(\bx_0)}H_2(\bu, \bv)}{\prod_{k=1}^d(u_k+v_k)} + \sum_{\bm{m}\in M} \frac{\partial_k^{m_k} g_0(\bx_0)}{(\bm{m}+\bm{1})!}\prod_{k=1}^d\frac{v_k^{m_k+1} - (-u)^{m_k}}{u_k+v_k}, 
\end{align*}
where $M = \{\bm{m}\in \ZZ_{\ge 0}^d: \sum_{k=1}^d m_k/\eta_k = 1\}$. For each of the three immersion maps, let the images be denoted by $g^\star=\overline{\iota}(g)$, $g^\dagger =\underline{\iota}(g)$ and $\bar{g}=((\overline{\iota}+\underline{\iota})/2)(g)$.

\begin{theorem}
\label{thm:weaklimit}
Let Assumptions~\ref{assumption:priors} and \ref{assumption:localsmoothness} hold and 
$\omega_n = n ^ {-1/(2+\sum_{k=1}^d \eta_k^{-1})}$ and 
$\bm{r}_n = (\omega_n^{1/\eta_1}, \ldots,\omega_n^{1/\eta_d})^{\trans}$. If $J_k \gg r_{n,k}^{-1}$ and $\prod_{k=1}^d J_k \ll n\omega_n$, then for every $z\in\RR$, we have
\begin{align*}
    &\Pi(\omega_n^{-1}(g^{\star}(\bx_0)-g_0(\bx_0))\leq z| \DD_n)
    \rightsquigarrow
    \P\big(
    \inf_{\bu\succeq \bm{0}}\sup_{\bv \succeq \bm{0}}
    U(\bu, \bv) \leq z| H_1
    \big),\\
    &\Pi(\omega_n^{-1}(g^{\dagger}(\bx_0)-g_0(\bx_0))\leq z| \DD_n)
    \rightsquigarrow
    \P\big(
    \sup_{\bv \succeq \bm{0}}\inf_{\bu\succeq \bm{0}}
    U(\bu, \bv) \leq z| H_1
    \big),\\
    &\Pi(\omega_n^{-1}(\bar{g}(\bx_0)-g_0(\bx_0))\leq z| \DD_n)
    \rightsquigarrow
    \P\big( \tfrac12 (\inf_{\bu\succeq \bm{0}}\sup_{\bv \succeq \bm{0}}
    U(\bu, \bv) +
    \sup_{\bv \succeq \bm{0}}\inf_{\bu\succeq \bm{0}}
    U(\bu, \bv)) \leq z| H_1
    \big).
\end{align*}
\end{theorem}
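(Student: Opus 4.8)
The plan is to write the density-rescaled average of the posterior step-heights over a generic block containing $\bx_0$ as a sum of four pieces --- a Bayesian (Dirichlet) fluctuation, a frequentist (empirical-process) fluctuation, a Taylor-remainder bias, and a deterministic drift --- to prove joint weak convergence of the resulting process, indexed by the rescaled block endpoints, to the Gaussian process $U$, and then to push the min--max, max--min and averaged functionals through a continuous-mapping argument after reducing the optimization to a compact index set. Reindex a block $[\bj_1:\bj_2]$ with $\bj_1\preceq\bj_0\preceq\bj_2$ by $(\bu,\bv)\in\RR_{\ge 0}^d\times\RR_{\ge 0}^d$ via $j_{0,k}-j_{1,k}\approx u_k J_k r_{n,k}$ and $j_{2,k}-j_{0,k}\approx v_k J_k r_{n,k}$; the hypotheses $J_k\gg r_{n,k}^{-1}$ and $\prod_k J_k\ll n\omega_n$ ensure that each such block contains a diverging number of bins while its true probability mass dominates the sampling noise. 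With $\hat p_{\bj}=N_{\bj}/n$ and $p_{0,\bj}=\int_{I_{\bj}}g_0$, decompose
\[
\frac{(\prod_k J_k)\sum_{\bj\in[\bj_1:\bj_2]}\theta_{\bj}}{\prod_k(j_{2,k}-j_{1,k}+1)}
= A_n^{\mathrm{B}}+A_n^{\mathrm{F}}+A_n^{\mathrm{b}}+A_n^{\mathrm{d}},
\]
where $A_n^{\mathrm{B}}$ is built from $\sum_{\bj\in[\bj_1:\bj_2]}(\theta_{\bj}-\hat p_{\bj})$, $A_n^{\mathrm{F}}$ from $\sum_{\bj\in[\bj_1:\bj_2]}(\hat p_{\bj}-p_{0,\bj})$, $A_n^{\mathrm{b}}$ from replacing $g_0$ by its Assumption~\ref{assumption:localsmoothness} Taylor polynomial on the block, and $A_n^{\mathrm{d}}$ is the block-average of that polynomial. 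By Assumption~\ref{assumption:localsmoothness}, $A_n^{\mathrm{b}}=o(\omega_n)$ uniformly on compacts in $(\bu,\bv)$; a Taylor computation as in \cite{han2020} shows that $\omega_n^{-1}(A_n^{\mathrm{d}}-g_0(\bx_0))$ converges to the deterministic component of $U$ (the sum over $M=\{\bm{m}:\sum_k m_k/\eta_k=1\}$); and since $\max_{\bj}\alpha_{\bj}\le a_1$ and $\prod_k J_k\ll n\omega_n$, the posterior mean $(\alpha_{\bj}+N_{\bj})/(|\bm{\alpha}|+n)$ differs from $\hat p_{\bj}$ by $o(\omega_n)$ after rescaling, so centering at $\hat p_{\bj}$ is legitimate.

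For the frequentist term, $\omega_n^{-1}A_n^{\mathrm{F}}$ is a normalized empirical process over the VC-type class of axis-parallel blocks through $\bx_0$; its finite-dimensional limits are centered Gaussian with covariance $g_0(\bx_0)\prod_k(u_k\wedge u_k'+v_k\wedge v_k')/(\prod_k(u_k+v_k)\prod_k(u_k'+v_k'))$, which is exactly the covariance of $\sqrt{g_0(\bx_0)}H_1(\bu,\bv)/\prod_k(u_k+v_k)$, and tightness on compacts follows from maximal inequalities for empirical processes. For the Bayesian term, conditionally on $\DD_n$ the vector $(\theta_{\bj})$ is $\mathrm{Dir}(\cdot;(\alpha_{\bj}+N_{\bj}))$ and $\sum_{\bj\in[\bj_1:\bj_2]}\theta_{\bj}$ is Beta with diverging parameters; a central limit theorem for Dirichlet vectors with growing parameters together with a tightness bound give that $\omega_n^{-1}A_n^{\mathrm{B}}$ converges conditionally to $\sqrt{g_0(\bx_0)}H_2(\bu,\bv)/\prod_k(u_k+v_k)$, the extra Dirichlet randomness being asymptotically independent of the data (its limiting covariance depends on the sample only through $g_0(\bx_0)$) and hence of the $H_1$-limit. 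Assembling the four pieces, $(\bu,\bv)\mapsto\omega_n^{-1}\big(\text{(block average)}-g_0(\bx_0)\big)$ converges weakly to $U(\bu,\bv)$, jointly with the empirical process, the data entering the limit through $H_1$ and the posterior draw through the conditionally independent $H_2$.

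Using the structural properties of multivariate monotone functions under Assumption~\ref{assumption:localsmoothness} (Lemma~1 of \cite{han2020}), the drift in $U$ forces $\inf_{\bu}\sup_{\bv}U$ and $\sup_{\bv}\inf_{\bu}U$ to be attained on a bounded region almost surely, and the corresponding finite-$n$ statement holds with probability tending to one, namely that both the inner and outer optimizations in \eqref{minmaxmap} and \eqref{maxminmap} localize to a window of order $O_{\P}(1)$ in the rescaled coordinates; hence the min--max and max--min over all admissible blocks may be replaced, up to $o_{\P}(1)$, by the same operations over a fixed compact $K\subset\RR_{\ge 0}^d\times\RR_{\ge 0}^d$. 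Since $\psi\mapsto\inf_{\bu}\sup_{\bv}\psi$, $\psi\mapsto\sup_{\bv}\inf_{\bu}\psi$, and their average are continuous on $C(K)$, the continuous mapping theorem --- applied jointly, since $g^{\star}(\bx_0)$ and $g^{\dagger}(\bx_0)$ arise from the same posterior draw --- yields the stated conditional weak limits $\inf_{\bu}\sup_{\bv}U$ and $\sup_{\bv}\inf_{\bu}U$, with $H_2$ integrated out in the conditional law given $H_1$; the averaged statement follows by summing the two coordinates. Finally, conditional on $H_1$ the argmin--argmax pairs are almost surely unique, so $\inf\sup U$ and $\sup\inf U$ have atomless conditional laws and the convergence of the posterior probabilities holds for every $z\in\RR$.

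The main obstacle is expected to be twofold: the uniform control of the rescaled block-average process over \emph{all} admissible blocks in the localization step --- simultaneously ruling out very small blocks, where the stochastic fluctuation blows up, and very large blocks, which are suppressed by the monotone drift --- so that the two nested optimizations genuinely localize; and making rigorous that the \emph{random} posterior c.d.f.\ converges in distribution to the conditional c.d.f.\ of $\inf\sup U$ (respectively $\sup\inf U$) given $H_1$, which requires the joint law of the data-driven and posterior-driven fluctuations rather than an ordinary weak-convergence statement.
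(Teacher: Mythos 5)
Your proposal follows essentially the same route as the paper: the four-term decomposition (Dirichlet fluctuation, posterior-mean-vs-empirical discrepancy, empirical-process fluctuation, deterministic drift) matches the paper's $A_{n,1},A_{n,2},B_{n,1},B_{n,2}$, the limiting Gaussian components and covariance are identical, and the localization-to-a-compact-window argument followed by continuous mapping is exactly how the paper proceeds (via Lemma B.1 of \cite{Kang_Coverage} and the drift-based localization lemma). The two obstacles you flag at the end are precisely the content of the paper's Lemmas \ref{lemma:dev} and \ref{prop:weakconvergence}, so the plan is sound and aligned with the published argument.
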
 

The corresponding immersion-posterior  $(1-\gamma)$-quantiles are 
\begin{align*}
    & Q^{(1)}_{n,\gamma}=\inf \{z: \Pi(g^{\star}(\bx_0)\leq z|\DD_n)\geq 1-\gamma\},\\
    & Q^{(2)}_{n,\gamma}=\inf \{z: \Pi(g^{\dagger}(\bx_0)\leq z|\DD_n)\geq 1-\gamma\},\\
    & Q^{(3)}_{n,\gamma}=\inf \{z: \Pi(\bar{g}(\bx_0)\leq z|\DD_n)\geq 1-\gamma\}.
\end{align*}
Theorem~\ref{thm:weaklimit} gives the coverage of the corresponding immersion-posterior credible intervals in terms of the distributions of the following random variables: 
\begin{align*}
    & Z_B^{(1)}   = \P (
    \inf_{\bu\succeq \bm{0}}\sup_{\bv \succeq \bm{0}}
    U(\bu, \bv) \leq 0 | H_1),\\
    & Z_B^{(2)}   = \P (
    \sup_{\bv \succeq \bm{0}}\inf_{\bu\succeq \bm{0}}
    U(\bu, \bv) \leq 0 | H_1), \\
    & Z_B^{(3)}   = \P (\tfrac12 (\inf_{\bu\succeq \bm{0}}\sup_{\bv \succeq \bm{0}}
    U(\bu, \bv)
    \sup_{\bv \succeq \bm{0}}\inf_{\bu\succeq \bm{0}}
    U(\bu, \bv)) \leq 0 | H_1).
\end{align*}

\begin{corollary}
\label{cor:coverage}
The asymptotic coverage of one-sided immersion-posterior credible intervals is given by 
\begin{align}
\label{eq:one sided}
    \P_0(g_0(\bx_0)\leq Q^{(l)}_{n,\gamma})  & \to \P(Z_B^{(l)} \leq 1-\gamma ), 
\end{align}
and that of two-sided immersion-posterior credible intervals are
\begin{align}
\label{eq:two sided}
    \P_0(g_0(\bx_0)\in [ Q^{(l)}_{n,(1-\gamma)/2}, Q^{(l)}_{n,\gamma/2}])  & \to \P(\gamma/2 \le Z_B^{(l)}\le 1-\gamma/2 ), 
\end{align}
for $l=1,2,3$. 
\end{corollary}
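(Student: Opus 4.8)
\textbf{Proof plan for Corollary~\ref{cor:coverage}.}
The plan is to deduce both displays from the pointwise weak convergence in Theorem~\ref{thm:weaklimit} via the standard correspondence between a quantile and its distribution function, after first isolating the continuity facts that make the argument legitimate. Fix $l\in\{1,2,3\}$, write $g^{(l)}$ for $g^{\star}$, $g^{\dagger}$, $\bar g$ respectively, and set $F_n(z)=\Pi(\omega_n^{-1}(g^{(l)}(\bx_0)-g_0(\bx_0))\le z\mid\DD_n)$, the normalized immersion-posterior distribution function; by the definition of $Q^{(l)}_{n,a}$ one has $\omega_n^{-1}(Q^{(l)}_{n,a}-g_0(\bx_0))=F_n^{-1}(1-a)$, the generalized inverse. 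Using $F_n^{-1}(p)\le 0\iff F_n(0)\ge p$ together with its left-limit variant $F_n^{-1}(p)\ge 0\iff F_n(0^-)\le p$, the one-sided coverage event becomes $\{g_0(\bx_0)\le Q^{(l)}_{n,\gamma}\}=\{F_n(0^-)\le 1-\gamma\}$, while the two-sided coverage event reduces, modulo the distinction between $F_n(0)$ and $F_n(0^-)$, to the sandwich $\{\gamma/2\le F_n(0)\le 1-\gamma/2\}$.

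Next I would record two continuity properties of the limiting objects. (a) Almost surely the conditional law given $H_1$ of each of $\inf_{\bu\succeq\bm{0}}\sup_{\bv\succeq\bm{0}}U(\bu,\bv)$, $\sup_{\bv\succeq\bm{0}}\inf_{\bu\succeq\bm{0}}U(\bu,\bv)$, and their half-sum has no atom at $0$, so that the random distribution function $z\mapsto\tilde F_\infty(z)$ on the right of Theorem~\ref{thm:weaklimit} is a.s.\ continuous at $0$ and $\tilde F_\infty(0^-)=\tilde F_\infty(0)$. (b) Each of $Z_B^{(1)},Z_B^{(2)},Z_B^{(3)}$ has a distribution function continuous at $1-\gamma$, $\gamma/2$ and $1-\gamma/2$. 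Both are anti-concentration statements for functionals of the Gaussian process $U$: conditionally on $H_1$ the independent component $H_2$, whose covariance $\prod_{k=1}^d(u_k\wedge u'_k+v_k\wedge v'_k)$ is nondegenerate, injects enough Gaussian fluctuation that, once the inf--sup and sup--inf are reduced to a localized functional via Lemma~1 of \cite{han2020}, standard continuity results for distributions of suprema of Gaussian processes yield (a), and a further anti-concentration argument for the conditional probability $Z_B^{(l)}$ yields (b). I expect this to be the only genuinely delicate part.

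The third step combines the two. Applying Theorem~\ref{thm:weaklimit} at $z=-\epsilon,0,\epsilon$ and using the monotonicity of $F_n$ to sandwich $F_n(-\epsilon)\le F_n(0^-)\le F_n(0)\le F_n(\epsilon)$, then letting $\epsilon\downarrow 0$ along continuity points, gives $F_n(0^-)\rightsquigarrow Z_B^{(l)}$ and $F_n(0)\rightsquigarrow Z_B^{(l)}$ by (a). The portmanteau theorem then closes the one-sided case: since $\{x\le 1-\gamma\}$ is closed and $\{x<1-\gamma\}$ open, $\P_0(F_n(0^-)\le 1-\gamma)\to\P(Z_B^{(l)}\le 1-\gamma)$ by (b), which is \eqref{eq:one sided}. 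For \eqref{eq:two sided}, the same portmanteau argument applied to the indicator of the interval $[\gamma/2,1-\gamma/2]$ evaluated at $F_n(0)$ — using (a) to remove the $F_n(0)$ versus $F_n(0^-)$ gap and (b) at both endpoints — gives the stated limit $\P(\gamma/2\le Z_B^{(l)}\le 1-\gamma/2)$. The cases $l=2$ and $l=3$ are verbatim, invoking the second and third lines of Theorem~\ref{thm:weaklimit}.
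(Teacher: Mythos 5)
Your proposal is correct and is essentially the argument the paper has in mind: the corollary is stated without a separate proof as an immediate consequence of Theorem~\ref{thm:weaklimit}, via exactly your quantile--CDF duality (translating $\{g_0(\bx_0)\le Q^{(l)}_{n,\gamma}\}$ into $\{F_n(0^-)\le 1-\gamma\}$ and its two-sided analogue) followed by the portmanteau theorem applied to the weak limits $Z_B^{(l)}$. Your continuity conditions (a)--(b) — no conditional atom at $0$ for the inf--sup/sup--inf functionals given $H_1$, and continuity of the law of $Z_B^{(l)}$ at the quoted levels — are precisely the implicit assumptions the paper (and the tabulations in the cited reference) also relies on; you assert them rather than prove them, which matches the paper's own level of detail, so there is no gap relative to the paper's treatment.
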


Thus the coverage depends only on the distributions of $Z_B^{(l)}$, $l=1,2,3$, which do not involve any model parameters. The distribution functions of $Z_B^{(l)}$ at a set of most commonly used points under several local smoothness levels of $g_0$ at $\bx_0$ were tabulated in  \cite{Kang_Coverage}. From their tables, we can conclude that if $\bm{\eta}=(1,1)$, the asymptotic coverage of the three $95\%$ one-sided credible intervals considered here are $96.6\%$, $97.5\%$, and $96.8\%$, respectively. To target a specific asymptotic frequentist coverage,  \cite{Kang_Coverage} tabulated values of the inverted distribution functions of $Z_B^{(1)}$, $Z_B^{(2)}$ and $Z_B^{(3)}$ at several points and back-calculated the credibility level necessary for certain standard confidence levels. For example, if we use the symmetrized immersion-posterior quantile intervals with the targeted coverage of $95\%$, the required two endpoints are $Q^{(3)}_{n,0.959}$ and $Q^{(3)}_{n,0.041}$, resulting in a shorter interval than the nominal $95\%$ credible interval.

\section{Numerical results}\label{sec:simulation}

In this section, we shall carry out two simulation studies to investigate the approach we proposed in previous sections in finite sample settings. We shall look at both the global performance with respect to the $\LL_1$-metric and the pointwise inference in terms of the frequentist coverage of posterior quantile-based credible interval.

\subsection{Global deviation of immersion posteriors}
\label{subsec:simul_global}

We consider the following four data-generating probability density functions throughout this section.
\begin{itemize}
    \item[1.]
    $g_1(x, y) = 9(1-x)^2(1-y)^2$,
    \item[2.]
    $g_2(x, y) = 2.25\sqrt{(1-x)(1-y)}$,
    \item[3.]
    $g_3(x, y) = 4[(1+e^{12(x - 0.5)})(1+e^{12(y - 0.5)})]^{-1}$,
    \item[4.]
    $g_4(x, y) = 4[(1+e^{4(x - 0.5)})(1+e^{4(y - 0.5)})]^{-1}$.
\end{itemize}
For each density function, we generate i.i.d. samples of sizes, $n = 500, 1000, 2000, 5000,$ and $10000$. 
For density functions $g_1$ and $g_2$, we obtain the data by independent sampling from Beta distributions 
for each of the two coordinates $x$ and $y$.
For density functions $g_3$ and $g_4$, we use the rejection sampling to generate the samples.
To implement the Bayesian procedure, we set $J = \ceil{2n^{1/4}}$, which is the rate-optimal choice for the $\LL_1$-posterior contraction rates according to Theorem \ref{thm:contraction}. All hyperparameters $\alpha_{\bj}$ in the Dirichlet priors are set to $1$ without any prior information. 
The unrestricted posterior is immediate by conjugacy.
Next, we calculate an $\LL_1$-projection of the unrestricted posterior samples onto the multivariate monotone decreasing function class. We implement the $\LL_1$-projection algorithm from Section 4 of \cite{Stout2013}. This algorithm can isotonize a sample of size $s$ within $O(s\log s)$ time for data on a $2$-dimensional grid. The immersion posterior sample of a multivariate probability density function can then be obtained by a renormalization step following the $\LL_1$-projection.

From the proof of Theorem \ref{thm:contraction}, we know that the immersion posterior inherits the same contraction rate as the unrestricted posterior. 
In addition to evaluating the proposed methods when the sample size is finite, we also compare the $\LL_1$-metric between the immersion posterior samples and the true data generating density function and the $\LL_1$-metric between the unrestricted posterior samples and the true data generating density function in the simulation studies.

For each data set, we generate $1000$ posterior samples and calculate $1000$ immersion posterior samples correspondingly. For each posterior density function, we numerically calculate the $\LL_1$-metric to the true probability density. We repeat this process for $100$ times.

We summarize our computation results in Table \ref{tab:L1metric}. We reported the average of the posterior mean and posterior standard deviation of $\LL_1$-metric in terms of unrestricted posterior and immersion posterior. In Table \ref{tab:L1metric}, the average of the posterior mean is denoted by $L_1$ and $L_1^{\ast}$ respectively for unrestricted and immersion posterior, and the average of posterior standard deviation is denoted by $SD$ and $SD^{\ast}$. Marked in parentheses are standard  deviations of posterior means and posterior standard deviations over $100$ replicates for each setting. 

From our simulation results, immersion posterior improves the performance of the conventional unrestricted posterior by leveraging the functional shape information at a later stage by a monotone mapping. The improvement is consistent over all functions concerned here and all sample sizes in terms of the $\LL_1$-metric and its posterior variation. 

\begin{table}[ht]
\caption{$\LL_1$-metric of unrestricted posterior and immersion posterior.}
\label{tab:L1metric}
\centering
\begin{tabular}{|c|r|cccc|}
  \hline
$g_0$ &  $n$ & $L_1$ & $SD$ & $L_1^{\ast}$ & $SD^{\ast}$ \\ 
  \hline \multirow{4}{*}{$g_1$}
&  500 & 0.397(0.015) & 0.029(1.091) & 0.264(0.011) & 0.022(0.002) \\ 
& 1000 & 0.337(0.011) & 0.021(0.720) & 0.220(0.008) & 0.016(0.001) \\ 
& 2000 & 0.280(0.009) & 0.015(0.483) & 0.182(0.006) & 0.011(0.001) \\ 
& 5000 & 0.217(0.005) & 0.010(0.284) & 0.143(0.003) & 0.007(0.000) \\ 
&10000 & 0.180(0.004) & 0.007(0.188) & 0.119(0.002) & 0.005(0.000) \\
  \hline \multirow{4}{*}{$g_2$}
&  500 & 0.429(0.014) & 0.028(0.785) & 0.185(0.012) & 0.026(0.004) \\ 
& 1000 & 0.375(0.012) & 0.021(0.566) & 0.156(0.011) & 0.019(0.003) \\ 
& 2000 & 0.320(0.007) & 0.015(0.422) & 0.130(0.007) & 0.013(0.002) \\ 
& 5000 & 0.254(0.006) & 0.010(0.260) & 0.104(0.004) & 0.008(0.001) \\ 
&10000 & 0.214(0.004) & 0.007(0.160) & 0.087(0.003) & 0.006(0.000) \\
  \hline \multirow{4}{*}{$g_3$}
&  500 & 0.393(0.014) & 0.030(1.120) & 0.256(0.015) & 0.031(0.003) \\ 
& 1000 & 0.336(0.012) & 0.022(0.603) & 0.208(0.012) & 0.022(0.002) \\ 
& 2000 & 0.277(0.007) & 0.016(0.381) & 0.166(0.008) & 0.015(0.001) \\ 
& 5000 & 0.213(0.006) & 0.010(0.289) & 0.125(0.005) & 0.010(0.001) \\ 
&10000 & 0.177(0.004) & 0.007(0.187) & 0.101(0.004) & 0.007(0.001) \\
  \hline \multirow{4}{*}{$g_4$}
&  500 & 0.418(0.017) & 0.029(0.986) & 0.199(0.011) & 0.024(0.002) \\ 
& 1000 & 0.364(0.009) & 0.021(0.546) & 0.168(0.009) & 0.017(0.002) \\ 
& 2000 & 0.311(0.008) & 0.015(0.377) & 0.142(0.006) & 0.013(0.001) \\ 
& 5000 & 0.244(0.005) & 0.010(0.252) & 0.112(0.004) & 0.008(0.001) \\ 
&10000 & 0.207(0.004) & 0.007(0.180) & 0.094(0.003) & 0.006(0.000) \\ 
   \hline
\end{tabular}
\end{table}

It may be noted that as no other test for multivariate monotone density, Bayesian or frequentist, exists in the literature, we do not compare our test with any other procedure.

\subsection{Coverage of credible intervals}
\label{subsec:simul_cover}

In this part, we conduct simulation studies to investigate the frequentist coverage of credible intervals based on the immersion posterior quantiles.
The data is generated from the same set of probability density functions considered in the last section.
We consider five different sample sizes, $n=500, 1000, 2000$, and $5000$. The point at which we will make an inference on the density is $\bx_0 = (0.5, 0.5)$. 
According to the sample size, we take $J = \ceil{n^{1/4}\sqrt{\log n}}$.
The hyperparameters in the Dirichlet priors are taken to be $1$.
All three immersion maps are considered. They are indicated as ``min-max", ``max-min", and ``ave" in the following tables.
We considered the two-sided credible interval for $g_0(\bx_0)$ with four credibilities, $0.99, 0.95, 0.90,$ and $ 0.80$. To recalibrate the credible interval to the right asymptotic coverage, we use the corresponding quantiles of $Z_B^{(3)}$ from Table 4 of   \cite{Kang_Coverage}. Note that the distribution function of $Z_B^{(3)}$ is symmetric about $1/2$. Thus it is possible to get both the upper and lower quantiles from the table.
For example, we use $0.990$ and $0.010$, instead of $0.995$ and $0.005$, immersion quantiles to construct the credible sets targeting the coverage $99\%$.

We repeated generating the data in each setup for $1000$ times and calculate the frequency of credible intervals including the true parameter.
The results for the function $g_1$ are summarized in Table \ref{tab:covlen.g1} -- Table \ref{tab:covlen.g4}.
When the sample size is larger, the recalibrated credible intervals behave very well, with coverage staying closer to the target while the credible intervals based on raw quantiles are more conservative. However, when the sample size is moderate, the performance of all methods varies among different probability density functions.
For instance, for the function $g_1$, as the function value at $\bx_0$ is relatively closer to $0$, the proposed methods result in poor coverage when $n=500$, but soon the coverage gets much better when $n=1000$. In all the tables, $C$ and $L$ denote the coverage and average length of credible intervals, rounded to two digits.

\begin{table}[ht]
\caption{Coverage and length of credible intervals for $g_1(\bx_0)$}
\label{tab:covlen.g1}
\centering
\begin{tabular}{|c|c|cc|cc|cc|}
\hline
\multirow{3}{*}{$n$} & \multirow{3}{*}{\shortstack{immersion\\maps}}  &\multicolumn{6}{c|}{credibility}\\
\cline{3-8}
&&\multicolumn{2}{c|}{0.99} & \multicolumn{2}{c|}{0.95} &\multicolumn{2}{c|}{0.90}\\ 
\cline{3-8}
&& C & L & C & L & C & L\\
\hline 
\multirow{4}{*}{500}
& min-max & 0.91 & 0.86(0.12) & 0.70 & 0.66(0.10) & 0.56 & 0.56(0.09) \\ 
  & max-min & 0.92 & 0.85(0.13) & 0.74 & 0.66(0.10) & 0.60 & 0.56(0.09) \\ 
  & average & 0.91 & 0.85(0.12) & 0.71 & 0.66(0.10) & 0.58 & 0.56(0.09) \\ 
  & adjusted & 0.85 & 0.83(0.12) & 0.61 & 0.63(0.10) & 0.48 & 0.53(0.09) \\ 
  \hline
  \multirow{4}{*}{1000}& min-max & 1.00 & 0.62(0.09) & 0.97 & 0.48(0.07) & 0.94 & 0.41(0.06) \\ 
  & max-min & 1.00 & 0.62(0.09) & 0.98 & 0.48(0.08) & 0.95 & 0.41(0.07) \\ 
  & average & 1.00 & 0.62(0.09) & 0.97 & 0.48(0.08) & 0.94 & 0.41(0.07) \\ 
  & adjusted & 0.99 & 0.60(0.09) & 0.95 & 0.46(0.07) & 0.90 & 0.39(0.06) \\ 
  \hline \multirow{4}{*}{2000}& min-max & 1.00 & 0.54(0.08) & 0.98 & 0.42(0.06) & 0.93 & 0.36(0.06) \\ 
  & max-min & 1.00 & 0.54(0.08) & 0.98 & 0.42(0.06) & 0.95 & 0.35(0.06) \\ 
  & average & 1.00 & 0.54(0.08) & 0.98 & 0.42(0.06) & 0.94 & 0.35(0.06) \\ 
  & adjusted & 0.99 & 0.52(0.08) & 0.95 & 0.40(0.06) & 0.89 & 0.34(0.05) \\  
  \hline \multirow{4}{*}{5000}& min-max & 1.00 & 0.44(0.06) & 0.98 & 0.35(0.05) & 0.93 & 0.29(0.04) \\ 
  & max-min & 1.00 & 0.44(0.06) & 0.98 & 0.34(0.05) & 0.95 & 0.29(0.04) \\ 
  & average & 1.00 & 0.44(0.06) & 0.98 & 0.34(0.05) & 0.94 & 0.29(0.04) \\ 
  & adjusted & 0.99 & 0.43(0.06) & 0.95 & 0.33(0.05) & 0.90 & 0.27(0.04) \\ 
\hline
\end{tabular}
\end{table}

\begin{table}[ht]
\caption{Coverage and length of credible intervals for $g_2(\bx_0)$}
\label{tab:covlen.g2}
\centering
\begin{tabular}{|c|c|cc|cc|cc|}
\hline
\multirow{3}{*}{$n$} & \multirow{3}{*}{\shortstack{immersion\\maps}}  &\multicolumn{6}{c|}{credibility}\\
\cline{3-8}
&&\multicolumn{2}{c|}{0.99} & \multicolumn{2}{c|}{0.95} &\multicolumn{2}{c|}{0.90}\\ 
\cline{3-8}
&& C & L & C & L & C & L\\
\hline 
\multirow{4}{*}{500}
& min-max & 1.00 & 0.59(0.08) & 0.99 & 0.45(0.07) & 0.97 & 0.38(0.06)\\  
& max-min & 1.00 & 0.59(0.08) & 0.99 & 0.45(0.07) & 0.97 & 0.38(0.06)\\ 
& average & 1.00 & 0.58(0.08) & 0.99 & 0.45(0.07) & 0.97 & 0.38(0.06)\\ 
& adjusted & 1.00 & 0.56(0.08) & 0.98 & 0.43(0.06) & 0.94 & 0.36(0.06)\\ 
  \hline
  \multirow{4}{*}{1000}& min-max & 0.99 & 0.50(0.07) & 0.98 & 0.39(0.06) & 0.94 & 0.33(0.05) \\ 
  & max-min & 0.99 & 0.50(0.07) & 0.96 & 0.39(0.06) & 0.92 & 0.33(0.05)\\
  & average & 0.99 & 0.50(0.07) & 0.97 & 0.39(0.06) & 0.92 & 0.33(0.05)\\
  & adjusted & 0.98 & 0.48(0.07) & 0.94 & 0.37(0.05) & 0.88 & 0.31(0.05)\\ 
  \hline \multirow{4}{*}{2000}& min-max & 0.99 & 0.43(0.06) & 0.98 & 0.34(0.05) & 0.94 & 0.29(0.04)\\ 
  & max-min & 0.99 & 0.43(0.06) & 0.96 & 0.34(0.05) & 0.91 & 0.28(0.04)\\ 
  & average & 0.99 & 0.43(0.06) & 0.97 & 0.33(0.05) & 0.93 & 0.28(0.04)\\
  & adjusted & 0.99 & 0.41(0.06) & 0.94 & 0.32(0.05) & 0.89 & 0.27(0.04)\\ 
  \hline \multirow{4}{*}{5000}& min-max & 0.99 & 0.36(0.05) & 0.97 & 0.28(0.04) & 0.92 & 0.23(0.03)\\
  & max-min & 0.99 & 0.36(0.05) & 0.95 & 0.28(0.04) & 0.91 & 0.23(0.03)\\ 
  & average & 0.99 & 0.35(0.05) & 0.96 & 0.27(0.04) & 0.92 & 0.23(0.03)\\
  & adjusted & 0.99 & 0.34(0.05) & 0.93 & 0.26(0.04) & 0.87 & 0.22(0.03)\\ 
\hline
\end{tabular}
\end{table}

\begin{table}[ht]
\caption{Coverage and length of credible intervals for $g_3(\bx_0)$}
\label{tab:covlen.g3}
\centering
\begin{tabular}{|c|c|cc|cc|cc|}
\hline
\multirow{3}{*}{$n$} & \multirow{3}{*}{\shortstack{immersion\\maps}}  &\multicolumn{6}{c|}{credibility}\\
\cline{3-8}
&&\multicolumn{2}{c|}{0.99} & \multicolumn{2}{c|}{0.95} &\multicolumn{2}{c|}{0.90}\\ 
\cline{3-8}
&& C & L & C & L & C & L\\
\hline 
\multirow{4}{*}{500}
& min-max & 0.95 & 1.27(0.16) & 0.85 & 1.00(0.14) & 0.74 & 0.85(0.12) \\ 
& max-min & 0.96 & 1.26(0.16) & 0.86 & 0.99(0.14) & 0.76 & 0.84(0.13) \\ 
& average & 0.96 & 1.26(0.16) & 0.86 & 0.99(0.14) & 0.74 & 0.84(0.13) \\ 
& adjusted & 0.93 & 1.22(0.16) & 0.78 & 0.95(0.14) & 0.67 & 0.80(0.12) \\ 
\hline 
\multirow{4}{*}{1000}
& min-max & 1.00 & 1.00(0.13) & 0.98 & 0.78(0.11) & 0.93 & 0.66(0.10) \\ 
& max-min & 1.00 & 0.99(0.13) & 0.97 & 0.78(0.11) & 0.93 & 0.66(0.10) \\ 
& average & 1.00 & 0.99(0.13) & 0.98 & 0.78(0.11) & 0.93 & 0.66(0.10) \\ 
& adjusted & 0.99 & 0.96(0.13) & 0.94 & 0.75(0.11) & 0.89 & 0.63(0.10) \\ 
\hline 
\multirow{4}{*}{2000}
& min-max & 1.00 & 0.89(0.11) & 0.97 & 0.70(0.10) & 0.94 & 0.59(0.09) \\ 
& max-min & 1.00 & 0.88(0.11) & 0.97 & 0.69(0.10) & 0.93 & 0.59(0.09) \\ 
& average & 1.00 & 0.88(0.11) & 0.97 & 0.69(0.10) & 0.93 & 0.59(0.09) \\ 
& adjusted & 0.99 & 0.86(0.11) & 0.94 & 0.66(0.09) & 0.90 & 0.56(0.08) \\ 
\hline 
\multirow{4}{*}{5000}
& min-max & 1.00 & 0.76(0.09) & 0.98 & 0.59(0.08) & 0.93 & 0.50(0.07) \\ 
& max-min & 1.00 & 0.75(0.09) & 0.97 & 0.59(0.08) & 0.92 & 0.50(0.07) \\ 
& average & 1.00 & 0.75(0.09) & 0.98 & 0.59(0.08) & 0.93 & 0.50(0.07) \\ 
& adjusted & 0.99 & 0.73(0.09) & 0.95 & 0.56(0.08) & 0.87 & 0.47(0.07) \\ 
\hline
\end{tabular}
\end{table}

\begin{table}[ht]
\caption{Coverage and length of credible intervals for $g_4(\bx_0)$}
\label{tab:covlen.g4}
\centering
\begin{tabular}{|c|c|cc|cc|cc|}
\hline
\multirow{3}{*}{$n$} & \multirow{3}{*}{\shortstack{immersion\\maps}}  &\multicolumn{6}{c|}{credibility}\\
\cline{3-8}
&&\multicolumn{2}{c|}{0.99} & \multicolumn{2}{c|}{0.95} &\multicolumn{2}{c|}{0.90} \\ 
\cline{3-8}
&& C & L & C & L & C & L \\
\hline 
\multirow{4}{*}{500}
& min-max & 0.99 & 0.76(0.10) & 0.96 & 0.59(0.09) & 0.92 & 0.50(0.08) \\ 
& max-min & 1.00 & 0.76(0.11) & 0.96 & 0.59(0.09) & 0.94 & 0.50(0.08) \\ 
& average & 1.00 & 0.76(0.10) & 0.96 & 0.59(0.09) & 0.93 & 0.50(0.08)  \\ 
& adjusted & 0.99 & 0.73(0.10) & 0.94 & 0.56(0.08) & 0.87 & 0.47(0.07) \\ \hline
  \multirow{4}{*}{1000}
& min-max & 1.00 & 0.63(0.09) & 0.98 & 0.49(0.07) & 0.94 & 0.42(0.06) \\ 
& max-min & 1.00 & 0.63(0.09) & 0.97 & 0.49(0.07) & 0.93 & 0.41(0.06) \\ 
& average & 1.00 & 0.63(0.09) & 0.97 & 0.49(0.07) & 0.94 & 0.41(0.06)  \\ 
& adjusted & 0.99 & 0.60(0.08) & 0.95 & 0.46(0.07) & 0.89 & 0.39(0.06)  \\
  \hline \multirow{4}{*}{2000}
& min-max & 1.00 & 0.55(0.07) & 0.98 & 0.43(0.06) & 0.95 & 0.36(0.05)  \\ 
& max-min & 1.00 & 0.55(0.07) & 0.97 & 0.42(0.06) & 0.94 & 0.36(0.06) \\ 
& average & 1.00 & 0.54(0.07) & 0.98 & 0.42(0.06) & 0.94 & 0.36(0.05)  \\ 
& adjusted & 1.00 & 0.52(0.07) & 0.96 & 0.40(0.06) & 0.90 & 0.34(0.05)  \\ 
  \hline \multirow{4}{*}{5000}
& min-max & 1.00 & 0.46(0.06) & 0.99 & 0.36(0.05) & 0.97 & 0.30(0.05)  \\ 
& max-min & 1.00 & 0.46(0.06) & 0.98 & 0.36(0.05) & 0.96 & 0.30(0.05)  \\ 
& average & 1.00 & 0.46(0.06) & 0.99 & 0.36(0.05) & 0.96 & 0.30(0.04)  \\ 
& adjusted & 1.00 & 0.44(0.06) & 0.97 & 0.34(0.05) & 0.93 & 0.28(0.04) \\ 

   \hline
\end{tabular}
\end{table}

\section{Proofs}
\label{sec:proof}

\begin{proof}[Proof of Lemma \ref{piece}]
For part (i), let $\bx \in I_{\bj}$. Then, as  $h$ is constant over $I_{\bj}$, by the definition of $s_{\bJ}$
\begin{align}
\label{jensen}
    \abs{s_{\bJ}(\bx) - h(\bx)}^p = \big|\big(\prod_{k=1}^d J_k\big) \int_{I_{\bj}}(s - h)\big|^p
    \le \big(\prod_{k=1}^d J_k \big) \int_{I_{\bj}}|s - h|^p,
\end{align}
by Jensen's inequality. By taking integral on both sides of \eqref{jensen} over $I_{\bj}$, we have $\int_{I_{\bj}} |s_{\bJ} - h|^p \leq \int_{I_{\bj}} |s - h|^p$. Part (i) now follows then by summing over all $\bj$ on both sides of the inequality.

For parts (ii) and (iii), if $s$ is a multivariate monotone nonincreasing function, then $\{b_{j}\}$ is a monotonically decreasing array in terms of the natural partial order on the indices since, by the translation of $s$,
\begin{align*}
    b_{\bj_1} = \int_{I_{\bj_1}}s(\bx)\d\bx \ge \int_{I_{\bj_1}} s(\bx + (\bj_2 - \bj_1)/\bJ) \d\bx = \int_{I_{\bj_2}} s(\bx)\d\bx = b_{\bj_2},
\end{align*}
when $\bj_2\succeq\bj_1$. Additionally, it is clear that if $s\ge 0$, $b_{\bj}\ge 0$ for all $\bj$, and $\sum_{\bj\in[\bm{1}:\bJ]}(\prod_{k=1}^d J_k)b_{\bj} = \int_{[0,1]^d} s$.
\end{proof}

The main ideas of the proofs of Theorems~\ref{thm:contraction}, \ref{thm:test} and \ref{thm:testingadaptive} are very similar to their univariate counterpart in \cite{chakraborty2022density}, but the associated bounds will have to be reestablished in the multidimensional case. For the sake of self-contentedness, we present the proofs in brief.

\begin{proof}[Proof of Theorem \ref{thm:contraction}]
Let $\tilde{g}$ be an $\LL_1$-projection of $g$ to $\cF^*$, from which we obtain the normalized $g^{\ast}$. Since
$  \|g^{\ast} - g_0\|_1 \leq \|\tilde{g} - g_0\|_1 + \|\tilde{g} - g^{\ast}\|_1$,
and
\begin{align*}
    \|\tilde{g} - g^{\ast}\|_1 = \int\big|\tilde{g} - \frac{\tilde{g}}{\int \tilde{g}}\big| = \int \tilde{g} \big|1 - \frac{1}{\int \tilde{g}}\big| =  \big|\int \tilde{g} - \int g_0\big| \leq \|\tilde{g} - g_0\|_1,
\end{align*}
we have $\| g^{\ast} -  g_0\|_1 \le 2\| \tilde{g} - g_0\|_1$. Moreover, $\|\tilde{g} - g_0\|_1 \le
\|\tilde{g}-g\|_1 + \|g - g_{0}\|_1 \le 2\|g - g_{0}\|_1$ by the triangle inequality and the definition of projection. Combining $\| g^{\ast} -  g_0\|_1 \le 4\|g - g_0\|_1$, hence the immersion posterior contraction rate is inherited from the unrestricted posterior contraction rate.

Let $g_{0,J}=J^{d}\sum_{\bj\in[\bm{1}:\bJ]}\theta_{0,\bj} \Ind_{I_{\bj}}$, where $\theta_{0,\bj}=\P_0(I_{\bj})$.
For every $\bj$ and $\bx\in I_{\bj}$, $|g_{0,J}(\bx) - g_0(\bx)| = |J^d\int_{I_{\bj}} g_0 - g_0(\bx)| \le g_0 (({\bj}-\bm{1})/J) - g_0({\bj}/J)$ as $g_0$ is coordinate-wise decreasing. Then, splitting the integral over each $I_{\bj}$, 
\begin{align*}
    \|g_{0,J} - g_0\|_1 
    \le J^{-d}\sum_{\bj\in[\bm{1}:\bJ]} [g_0 (({\bj}-\bm{1})/J) - g_0({\bj}/J)]\le d J^{-1} (g_0(\bm{0}) - g_0(\bm{1}) ),
\end{align*}
by the telescoping property of the series corresponding to the points $\{\ldots, \bj, \bj+\bm{1},\bj+2\cdot\bm{1},\ldots\}$, and there are no more than $d J^{d-1}$ such series in the above summation.

Since $\|g - g_{0}\|_1 \le \|g - g_{0,J}\|_1 + \|g_{0,J} - g_{0}\|_1$, we only have to show that for every $M_n\to \infty$, 
\begin{align} 
\label{eq:posterior variation}
\E_0 \Pi (\|g-g_{0,J}\|_1\ge M_n \sqrt{J^d/n}|\DD_n) \to 0. 
\end{align}
As the $\LL_2$-norm dominates the $\LL_1$-norm, it suffices to show that $\E_0 \Pi (\|g-g_{0,J}\|_2\ge M_n \sqrt{J^d/n}|\DD_n)\to 0$. Observing that 
$\|g-g_{0,J}\|_2^2= J^{d} \sum_{\bj\in[\bm{1}:\bJ]} |\theta_{\bj}-\theta_{0,\bj}|^2$, it is enough to verify the bounds  
\begin{align*}
\sum_{\bj\in[\bm{1}:\bJ]} \mathrm{Var} (\theta_{\bj}|\DD_n)=O_P(n^{-1}), \quad 
\sum_{\bj\in[\bm{1}:\bJ]} |\E (\theta_{\bj}|\DD_n)-\theta_{0,\bj}|^2=O_P(n^{-1})
\end{align*}
in view of the Markov inequality and a standard variance-bias decomposition. Since $\text{Var}(\theta_{\bj}|\DD_n) \leq (\alpha_{\bj} + N_{\bj})/(\alpha_\cdot + n)^2$, the first part of the assertion is immediately obtained as $\alpha_{\bj}\le a_1<\infty$, $J^d\le n$ and $\sum_{\bj\in[\bm{1}:\bJ]}N_{\bj}=n$. For the second claim, note that $\E(\theta_{\bj}|\DD_n) = (\alpha_{\bj} + N_{\bj})/(\alpha_\cdot + n)$ and $N_{\bj}\sim \text{Bin}(n,\theta_{0,\bj})$, where $\alpha_\cdot= \sum_{\bj\in[\bm{1}:\bJ]} \alpha_{\bj}\le J^d a_1$.
Thus 
\begin{align*}
     \text{Var}(\E(\theta_{\bj}|\DD_n)) & = n\theta_{0,\bj}(1-\theta_{0,\bj})/(\alpha_0 + n)^2 \le \theta_{0,\bj}/n\le J^{-d} g_0(\bm{0})/n, \\
     (\E_0(\E(\theta_{\bj}|\DD_n)) - \theta_{0,\bj})^2 & = (\alpha_{\bj} - \alpha_0 \theta_{0,\bj})^2 / (\alpha_0 + n)^2 \le 2a_1^2(1+g_0^2(\bm{0}))/n^2,
\end{align*}
as $\theta_{0,\bj} \le J^{-d} g_0(\bm{0})$. Now summing over $\bj\in [\bm{1}:\bJ]$, we obtain the result.
\end{proof}

\begin{proof}[Proof of Theorem \ref{thm:test}]
(i) Since $d_1(g,\cF) = \|g - \tilde{g}\|_1 \le \|g - g_0\|_1$, the first assertion follows from Theorem \ref{thm:contraction}.

(ii) We claim that $d_1(g_0,{\cF}^*)>0$. If not, for every $\eta > 0$, there exists $f\in \cF^*$ such that $\|f-g_0\|\le \eta/2$.  Then $f_+\in \cF^*$ and $\|f_+ - g_0\|_1 \le \|f - g_0\|_1$, where $f_+$ is the positive part of $f$. Further, $\bar{f} = f_+/\|f_+\|_1\in \cG^*$ and, since $\bar f-f_+=f_+ (1-\|f_+\|_1)/\|f_+\|_1$ and $\|g_0\|_1=1$,  
\begin{align*}
    \|\bar{f} - g_0\|_1 \le \|\bar{f} - f_+\|_1 + \|f_+ - g_0\|_1 
    \le \abs{\|g_0\|_1 - \|f_+\|_1} + \|f_+ - g_0\|_1 
\end{align*}
is bounded by $2\|f_+ - g_0\|_1\le 2\|f - g_0\|_1 \le \eta $, contradicting the assumption that $g_0$ does not belong to the closure of $\cG^*$. Thus $d_1(g_0,\cF^*)>0$. Hence by the triangle inequality and the definition of the $\LL_1$-projection,
\begin{align}
\label{proj ineq}
    d_1(g, \cF^*) \ge \|g_0 - \tilde{g}\|_1 - \|g - g_0\|_1 \ge d_1(g_0,\cF^*) - \|g - g_0\|_1.
\end{align}
To conclude the consistency of the test, it now suffices to show that the posterior distribution of $g$ is consistent at $g_0$ with respect to the $\LL_1$-metric. By the martingale convergence theorem, we have $\|g_0 - g_{0,J}\|_1\to 0$ as $J\to \infty$. Observing \eqref{eq:posterior variation} does not require monotonicity, posterior consistency at $g_0$ follows.

(iii) Uniformly over $g_0\in \mathcal{H}(\alpha, L)$, we have $\|g_0 - g_{0,J}\|_1 \lesssim J^{-\alpha}$. Combined with  \eqref{eq:posterior variation}, this gives the posterior contraction rate $\max\{J^{-\alpha}, \sqrt{J^d/n}\}$  at $g_0$, which is optimized to $n^{-\alpha/(2+d)}$ for the choice   $J\asymp n^{1/(2+d)}$. As in the proof of part (ii), we can conclude that $d_1(g_0, \cF^*) \ge \rho_n(\alpha)/2$ if $d_1(g_0, \cG^*) > \rho_n(\alpha)$. By \eqref{proj ineq},
    $\Pi(d_1(g, \cF^*) \le M_n n^{-1/(2+d)}|\DD_n)$ is bounded by 
    $\Pi(\|g-g_0\|_1\ge d_1(g_0,\cF^*) - M_n n^{-1/(2+d)}|\DD_n)$.     
When $\alpha<1$, $n^{-\alpha/(d+2)}\gg M_n n^{-1/(d+2)}$ and hence the bound goes to $0$ in $\P_0$-probability with the choice of $\rho_n(\alpha)=C n^{-\alpha/(2+d)}$ for any fixed $C>0$. When $\alpha=1$, the bound is at most $\Pi(\|g-g_0\|_1\ge (C/2 - 1)M_n n^{-1/(2+d)}|\DD_n)$, which converges to zero in $\P_0$-probability.
\end{proof}

\begin{proof}[Proof of Theorem \ref{thm:testingadaptive}]
(i) By the definition of the piece-wise constant approximation, $g_{0,J} \in \cG^*\subset \cF^*$ if $g_0\in \cG^*$. Then we have $d_1(g, \cF^*) \le \|g - g_{0,J}\|_1$ for every $J>0$. 
Then for part (i), we assert that 
\begin{align}
\label{expression1}
   \P_0 ( \Pi(\|g - g_{0,J} \|_1  > M_0 \sqrt{(J^d\log n) /n} | \DD_n) \le 1-\gamma)\to 1.
\end{align}
To this end, we shall show that for $J_n^d \lesssim n$,
\begin{itemize}
    \item[(a)] 
        $\Pi(\|g - g_{0,J} \|_1 > M_0 \sqrt{J^d \log n / n}, J \le J_n | \DD_n) \to_{\P_0} 0$; 
    \item[(b)] $\Pi(J > J_n | \DD_n) \to_{\P_0} 0$ for $J_n\asymp (n/\log n)^{1/(d+1)}$.
\end{itemize}
As the $\LL_2$-metric dominates the $\LL_1$-metric on $[0,1]^d$,
the posterior probability in (a) is bounded by 
$\sum_{J=1}^{J_n} A_{n,J}(M_0)\pi(J|\DD_n)$, 
where $A_{n,J}(M_0) = \Pi(\int|g - g_{0,J} |^2 > M_0^2 (J^d \log n) / n| J, \DD_n)$.
By Markov's inequality 
\begin{align}
\label{An}
    A_{n,J}(M_0) \le  \frac{n}{M_0^2 J^d\log n}\big[ \sum_{\bj\in[\bm{1}:\bJ]} \mathrm{Var}(\theta_{\bj}|\DD_n)+  \sum_{\bj\in[\bm{1}:\bJ]}(\E(\theta_{\bj}|\DD_n) - \theta_{0,\bj})^2\big]. 
\end{align}
Using the bounds established in the proof of \eqref{eq:posterior variation}, the first term inside the bracket is $O(n^{-1})$ uniformly for all $J$, while we can decompose $\E(\theta_{\bj}|\DD_n) - \theta_{0,\bj}$ as the sum of $(\alpha_{\bj}-\alpha_\cdot N_{\bj}/n)/(\alpha_\cdot+n)$ and $N_{\bj}/n -\theta_{0,\bj}$ with $\alpha_\cdot:= \sum_{\bj\in[\bm{1}:\bJ]}\alpha_{\bj}\le a_1 J^d$. The sum of squares of the first term over $\bj\in[\bm{1}:\bJ]$ is bounded by 
\begin{align*}
    \frac{2\big[\sum_{\bj\in[\bm{1}:\bJ]} \alpha_{\bj}^2 + \alpha_\cdot^2  \sum_{\bj\in[\bm{1}:\bJ]} N_{\bj}^2/n^2\big]}{(\alpha_\cdot+n)^2 }  
    \le \frac{2[a_1^2 J^d+(a_1 J^d)^2 n \sum_{\bj\in[\bm{1}:\bJ]} N_{\bj}/n^2]}{(\alpha_\cdot+n)^2 } \lesssim \frac{J^d}{n}
\end{align*}
uniformly for all $\bj$ and all $J$. 
Using Bennett's inequality (cf. Proposition~A.6.2 of \cite{van1996weak}) and summing over $\bj$, we conclude that simultaneously for all $J\le J_n$, 
$	\max\{ |N_j/n-\theta_{0j}|: \bj\in[\bm{1}:\bJ]  \} \lesssim \sqrt{(\log n)/(nJ^d)}$ 
if $J_n^d \log n\lesssim n$. This gives that the expression in \eqref{An} is bounded by a constant multiple of $M_0^{-2}$, and hence can be made arbitrarily small by choosing $M_0$ sufficiently large. 
Thus, claim (a) follows.

To establish (b), we apply the general theory of posterior contraction in   \cite{ghosal2017fundamentals}. Because $g_0$ is bounded below, any $g$ sufficiently uniformly close is also bounded below, and so $g_0/g$ is uniformly bounded. Using standard relations between Kullback-Leibler divergences, the Hellinger distance and the $\LL_1$-distance, and recalling that the piece-wise constant approximation $g_{0,J}$ is $O(J^{-1})$ close to $g_0$ in $\LL_1$, to estimate the prior $\epsilon_n$-ball probability necessary for the application of the general theory, it suffices to lower bound $\Pi(J=J_n^*) \Pi ( \sum_{\bj\in[\bm{1}:\bJ_n^*] } |\theta_{\bj}-\theta_{0,\bj}|\le \epsilon_n) $, where $J_n^* \asymp \epsilon_n^{-1}$. By the assumption on the prior on $J$ and the Dirichlet small ball probability estimate (cf. Lemma~G.13 of  \cite{ghosal2017fundamentals}), a lower bound is $e^{-c (J_n^*)^d \log n}$ for some $c>0$ provided that $\epsilon_n$ is lower bounded by some negative power of $n$. Thus (b) holds if $J_n$ is chosen a sufficiently large constant multiple of $J_n^*$ by Theorem 8.20 of \cite{ghosal2017fundamentals}. Now, with these estimates, proceeding as in the proof of Theorem~\ref{thm:test}, the conclusion in (i) can be established.

For part (ii), in view of the martingale convergence theorem, for any given $\epsilon>0$, $\|g_0 - g_{0,J}\|_1 <\epsilon$ when $J\ge J_0$, say. Then we proceed as in part (i) with a fixed small $\epsilon$, $J_n^*=J_0$ and establish that (a) and (b) hold for $J_n$ a  multiple of $(n/\log n)^{1/d}$, where the constant of proportionality is taken to be sufficiently small depending on $\epsilon$.  

The proof of part (iii) also proceeds similarly after establishing the posterior contraction rate $\epsilon_n\asymp (n/\log n)^{-\alpha/(2\alpha+d)}$ by standard arguments for H\"older functions. The estimates used in the proof imply that (a) and (b) hold for $J_n$ a sufficiently large constant multiple of $(n/\log n)^{1/(2\alpha+d)}$. 
\end{proof}

The proof of Theorem~\ref{thm:weaklimit} is long, so we separate it into several key steps. These are stated as separate lemmas below, under the setup and assumptions of Theorem~\ref{thm:weaklimit}. We first introduce some notations and preliminaries. 

For $\bm{t}\in \RR^d$, let $j(\bm{t})=\ceil{(\bx_0 + \bm{t}\circ \bm{r}_n)\circ \bJ}$.
Then we can rewrite $g^{\star}(\bx_0)$ as 
\begin{align*}
    g^{\star}(\bx_0) = \min_{\bu\succeq \bm{0}} \max_{\bv\succeq \bm{0}} \frac{\sum_{\bj_0\in[j(-\bu): j(\bv)]} \theta_{\bj}}{\prod_{k=1}^d (j(\bv)_k-j(-\bu)_k+1)J_k^{-1}},
\end{align*}
and the localized version of $g^{\star}(\bx_0)$ with some positive constant $c$ and $\gamma$ as 
$$ g^{\star}_c(\bx_0) = \min_{c^{-\gamma}\bm{1}\preceq\bm{u}\preceq c\bm{1}}\max_{c^{-\gamma}\bm{1}\preceq\bm{v}\preceq c\bm{1}} \frac{\sum_{\bj_0\in[j(-\bu): j(\bv)]} \theta_{\bj}}{\prod_{k=1}^d (j(\bv)_k-j(-\bu)_k+1)J_k^{-1}}.$$
We denote
\begin{align*}
     W^{\star}_n &=\omega_n^{-1}(g^{\star}(\bx_0)-g_0(\bx_0)),\\  
    W^{\star}_{n,c} &=\omega_n^{-1}(g^{\star}_c(\bx_0)-g_0(\bx_0)),\\
    W_c &=\inf_{c^{-\gamma}\bm{1}\preceq\bm{u}\preceq c\bm{1}}\sup_{c^{-\gamma}\bm{1}\preceq\bm{v}\preceq c\bm{1}}\\
    & \Bigg\{
        \frac{\sqrt{g_0(\bx_0)} H_1(\bm{u},\bm{v})}{\prod_{k=1}^d(u_k+v_k)}+\frac{ \sqrt{g_0(\bx_0)} H_2(\bm{u},\bm{v})}{\prod_{k=1}^d(u_k+v_k)}+\sum_{\bm{m}\in M} \frac{\partial^{\bm{m}}g_0(x_0)}{(\bm{m}+\bm{1})!}
        \prod_{k=1}^d \frac{v_k^{m_k+1}-(-u_k)^{m_k+1}}{u_k+v_k}
        \Bigg\},\\
    W &= \inf_{\bm{u}\succeq \bm{0}}\sup_{\bm{v} \succeq\bm{0}} \\
    & \Bigg\{
        \frac{\sqrt{g_0(\bx_0)} H_1(\bm{u},\bm{v})}{\prod_{k=1}^d(u_k+v_k)}+\frac{ \sqrt{g_0(\bx_0)} H_2(\bm{u},\bm{v})}{\prod_{k=1}^d(u_k+v_k)} +\sum_{\bm{m}\in M} \frac{\partial^{\bm{m}}g_0(x_0)}{(\bm{m}+\bm{1})!}
        \prod_{k=1}^d \frac{v_k^{m_k+1}-(-u_k)^{m_k+1}}{u_k+v_k}
        \Bigg\}.
\end{align*}

The desired weak convergence of random measures on $\RR$ follows from Lemma B.1 of \cite{Kang_Coverage}, stated below.

\begin{lemma}[Lemma B.1 of \cite{Kang_Coverage}]
\label{prop:weakconvergence}
Let the following conditions be satisfied.
\begin{itemize}
\item[{\rm (i)}] For every $c>0$, $\cL(W^{\star}_{n,c}|\DD_n)\leadsto \cL(W_c|H_1)$;
\item[{\rm (ii)}] $\lim_{c\to\infty}\limsup_{n\to\infty} \Pi(W^{\star}_{n,c}\neq W^{\star}_n|\DD_n)=0$ in $\P_0$-probability;
\item[{\rm (iii)}] $\lim_{c\to\infty}\P(W_c\neq W)=0$.
\end{itemize}
Then in the space of probability measures under the weak topology, $\cL(W_n^{\star}|\DD_n)\leadsto \cL(W|H_1)$.
\end{lemma}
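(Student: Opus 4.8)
The plan is to recognize this as a direct application of the standard approximation criterion for weak convergence in a metric space (in the spirit of Theorem~3.2 of Billingsley, \emph{Convergence of Probability Measures}), carried out in the Polish space $S=\mathcal{M}_1(\RR)$ of Borel probability measures on $\RR$ under the topology of weak convergence, which I would metrize by the bounded-Lipschitz metric $d_{BL}$. The four $S$-valued random elements entering the argument are the conditional laws $\mu_n:=\cL(W^{\star}_n\mid\DD_n)$ and $\mu_{n,c}:=\cL(W^{\star}_{n,c}\mid\DD_n)$, which are measurable functions of $\DD_n$, and $\nu_c:=\cL(W_c\mid H_1)$ and $\nu:=\cL(W\mid H_1)$, which are measurable functions of $H_1$. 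The assertion to be proved is exactly $\mu_n\rightsquigarrow\nu$ in $S$, and hypothesis (i) is precisely the statement $\mu_{n,c}\rightsquigarrow\nu_c$ (as $n\to\infty$, for each fixed $c$).

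The elementary device tying everything together is the coupling bound: for random variables $X,Y$ and any $\sigma$-field $\mathcal{A}$, a test function $f$ with $\|f\|_{BL}\le1$ satisfies $|f(X)-f(Y)|\le2$ everywhere and $f(X)=f(Y)$ on $\{X=Y\}$, so $|\E[f(X)\mid\mathcal{A}]-\E[f(Y)\mid\mathcal{A}]|\le 2\,\P(X\ne Y\mid\mathcal{A})$, whence $d_{BL}\big(\cL(X\mid\mathcal{A}),\cL(Y\mid\mathcal{A})\big)\le 2\,\P(X\ne Y\mid\mathcal{A})$. Taking $\mathcal{A}=\sigma(\DD_n)$ gives $d_{BL}(\mu_{n,c},\mu_n)\le 2\,\Pi(W^{\star}_{n,c}\ne W^{\star}_n\mid\DD_n)$, so hypothesis (ii) yields, for each $\epsilon>0$, $\lim_{c\to\infty}\limsup_{n\to\infty}\P_0\big(d_{BL}(\mu_{n,c},\mu_n)>\epsilon\big)=0$, which is the ``uniformly in $n$'' approximation clause of the theorem. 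Taking $\mathcal{A}=\sigma(H_1)$ gives $d_{BL}(\nu_c,\nu)\le 2\,\P(W_c\ne W\mid H_1)$, and since hypothesis (iii) forces $\E\big[\P(W_c\ne W\mid H_1)\big]=\P(W_c\ne W)\to0$, we get $\P(W_c\ne W\mid H_1)\to0$ in probability, hence $d_{BL}(\nu_c,\nu)\to0$ in probability, hence $\nu_c\rightsquigarrow\nu$ in $S$ as $c\to\infty$. With (i) supplying $\mu_{n,c}\rightsquigarrow\nu_c$ for each $c$, all three premises of the approximation theorem hold and $\mu_n\rightsquigarrow\nu$ follows.

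The routine parts, which I would state briefly and not belabor, are that $S$ is separable and metrizable (since $\RR$ is Polish) and that the conditional laws above are genuinely measurable as $S$-valued maps. The one point that deserves real care — and the main obstacle — is pinning down the meaning of ``$=0$ in $\P_0$-probability'' attached to the iterated $\limsup$ in (ii): the approximation theorem needs, for each $\epsilon>0$, the purely deterministic statement that $\P_0(d_{BL}(\mu_{n,c},\mu_n)>\epsilon)$ can be made small for all large $n$ by taking $c$ large, i.e.\ ``for all $\epsilon,\delta>0$ there exist $c_0,n_0$ such that $\P_0\big(\Pi(W^{\star}_{n,c}\ne W^{\star}_n\mid\DD_n)>\epsilon\big)<\delta$ whenever $c\ge c_0$ and $n\ge n_0$.'' I would therefore unpack hypothesis (ii) into exactly this operational form (which is how (ii) is actually produced and used in the proof of Theorem~\ref{thm:weaklimit}), after which the conclusion is immediate from the cited metric-space approximation theorem with no further estimation.
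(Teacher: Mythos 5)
Your argument is correct. Note that this paper does not actually prove the lemma --- it is imported verbatim as Lemma B.1 of \cite{Kang_Coverage} --- so there is no in-paper proof to compare against; but your route (viewing the conditional laws as random elements of the Polish space of probability measures metrized by the bounded--Lipschitz distance, using the conditional coupling bound $d_{BL}(\cL(X\mid\mathcal{A}),\cL(Y\mid\mathcal{A}))\le 2\,\P(X\ne Y\mid\mathcal{A})$ for the two approximation hypotheses, and finishing with the standard convergence-together theorem of Billingsley) is exactly the standard argument one expects behind the cited result, and your unpacking of the iterated-limit-in-$\P_0$-probability clause in (ii) into the form needed for that theorem is the right reading of how the condition is produced and consumed in the proof of Theorem~\ref{thm:weaklimit}. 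The only points left implicit --- measurability of the conditional laws as measure-valued maps and of the bounded--Lipschitz supremum (reduce to a countable dense family of test functions) --- are routine, as you indicate.
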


We apply the Gamma representation for the Dirichlet distribution to the unrestricted posterior $\btheta$ given the data $\DD_n$. Let $V_{\bj}|\DD_n \sim \text{Gamma} (\alpha_{\bj} + N_{\bj}, 1)$, mutually independent. Then $\theta_{\bj}$ given the data $\DD_n$ is distributed as $V_{\bj}/ \sum_{\bl\in[\bm{1}:\bJ]} V_{\bl}$. 
Let $\alpha_\cdot = \sum_{\bl\in[\bm{1}:\bJ]} \alpha_{\bl}$. 
It follows immediately that $\sum_{\bl\in[\bm{1}:\bJ]} V_{\bl} \sim \text{Gamma}(\alpha_\cdot + n, 1)$, and $\E(\sum_{\bl\in[\bm{1}:\bJ]}V_{\bl}) = \text{Var}(\sum_{\bl\in[\bm{1}:\bJ]} V_{\bl})=\alpha_\cdot + n$.


We decompose 
\begin{eqnarray*}
     \omega_n^{-1}(g^{\star}(\bx_0)-g_0(\bx_0))  =  \min_{\bu\succeq \bm{0}} \max_{\bv\succeq \bm{0}}\{A_{n,1}(\bu, \bv) + A_{n,2}(\bu, \bv) + B_{n, 1}(\bu, \bv) + B_{n, 2}(\bu, \bv)\},
\end{eqnarray*}
where
\begin{align*}
    A_{n,1}(\bu, \bv) & = 
    \omega_n^{-1}\frac{\sum_{[j(-\bu):j(\bv)]} (V_{\bj}- \E(V_{\bj}|\DD_n))/ \sum_{\bl}V_{\bl}}{\prod_{k=1}^d (j(\bv)_{k}-j(-\bu)_{k}+1)J_k^{-1}}, \\
    A_{n, 2}(\bu, \bv) & = \omega_n^{-1}\frac{\sum_{[j(-\bu):j(\bv)]} (\E(V_{\bj}|\DD_n)/\sum_{\bl}V_{\bl} - N_{\bj}/n)}{\prod_{k=1}^d (j(\bv)_{k}-j(-\bu)_{k}+1)J_k^{-1}},\\
    B_{n, 1}(\bu, \bv) & =\omega_n^{-1}\frac{\sum_{[j(-\bu):j(\bv)]} (N_{\bj} - \E_0(N_{\bj}))/n}{\prod_{k=1}^d (j(\bv)_{k}-j(-\bu)_{k}+1)J_k^{-1}},
    \\
    B_{n, 2}(\bu, \bv) & = \omega_n^{-1}\left(\frac{\sum_{[j(-\bu):j(\bv)]} \E_0(N_{\bj})/n}{\prod_{k=1}^d (j(\bv)_{k}-j(-\bu)_{k}+1)J_k^{-1}} - g_0(\bx_0)\right).
\end{align*}

\begin{lemma}\label{lem:Yn}
 The conditional distribution of stochastic process $Y_n(\bu,\bv;V)= \omega_n \sum_{[j(-\bu):j(\bv)]} (V_{\bj}- \E(V_{\bj}|\DD_n))$ given $\DD_n$ converges weakly to the distribution of $\sqrt{g_0(\bx_0)}H_2(\bu,\bv))$ in $\LL_{\infty}([\bm{0},c\bm{1}]\times[\bm{0},c\bm{1}])$
 in $\P_0$-probability.
\end{lemma}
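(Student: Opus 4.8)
The plan is to establish weak convergence of the conditional law of the process $Y_n(\cdot,\cdot;V)$ given $\DD_n$ by the standard two-part recipe: convergence of finite-dimensional distributions together with asymptotic tightness (stochastic equicontinuity) in $\LL_\infty([\bm 0,c\bm 1]\times[\bm 0,c\bm 1])$, both to hold in $\P_0$-probability. First I would set up the scaling. Given $\DD_n$, the summands $V_{\bj}-\E(V_{\bj}\mid\DD_n)$ are independent, centered, with $\mathrm{Var}(V_{\bj}\mid\DD_n)=\alpha_{\bj}+N_{\bj}$, so for fixed $(\bu,\bv)$ the conditional variance of $Y_n(\bu,\bv;V)$ is $\omega_n^2\sum_{[j(-\bu):j(\bv)]}(\alpha_{\bj}+N_{\bj})$. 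Because $J_k\gg r_{n,k}^{-1}$, the index block $[j(-\bu):j(\bv)]$ contains $\asymp\prod_k (u_k+v_k) r_{n,k}J_k$ cells, and by the law of large numbers (using Assumption~\ref{assumption:localsmoothness}, continuity of $g_0$ at $\bx_0$, and $\prod_k J_k\ll n\omega_n$) one gets $\omega_n^2\sum_{[j(-\bu):j(\bv)]}N_{\bj}\to g_0(\bx_0)\prod_k(u_k+v_k)$ in $\P_0$-probability, while the $\alpha_{\bj}$ contribution is negligible since $\max_{\bj}\alpha_{\bj}\le a_1$ and $\omega_n^2 J^d\to 0$. More generally, for two index points the conditional covariance is $\omega_n^2\sum_{\bj\in[j(-\bu):j(\bv)]\cap[j(-\bu'):j(\bv')]}(\alpha_{\bj}+N_{\bj})$, and the intersection of the two rectangles is itself a rectangle with side lengths $\asymp (u_k\wedge u_k'+v_k\wedge v_k')r_{n,k}J_k$, so the covariance converges to $g_0(\bx_0)\prod_k(u_k\wedge u_k'+v_k\wedge v_k')$ — exactly the covariance of $\sqrt{g_0(\bx_0)}H_2$. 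Finite-dimensional asymptotic normality then follows from the Lindeberg–Feller CLT for triangular arrays: the Lindeberg condition holds because each normalized summand is $O_{\P}(\omega_n\sqrt{\log n})$ (Gamma tails) which is $o(1)$, so no single cell dominates.

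Next I would handle asymptotic tightness. The natural route is to show the increments of $Y_n$ over rectangles are controlled in a suitable Orlicz ($\psi_2$ or $\psi_1$) norm by the Lebesgue measure of the symmetric difference of the index rectangles, and then invoke a maximal inequality / chaining bound (e.g. the bracketing or the increment-based tightness criterion, as in Theorem~2.2.4 and the tightness results of \cite{van1996weak}). Concretely, for $(\bu,\bv)$ and $(\bu',\bv')$ close, $Y_n(\bu,\bv;V)-Y_n(\bu',\bv';V)$ is a centered sum of independent Gamma-type variables over the cells in the symmetric difference of $[j(-\bu):j(\bv)]$ and $[j(-\bu'):j(\bv')]$; its conditional variance is $\omega_n^2$ times the $(\alpha+N)$-mass of that symmetric difference, which is $\lesssim g_0(\bx_0)\,|$ symmetric difference of the limiting rectangles $|$ plus a negligible error, uniformly on the compact index set. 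Since the limiting rectangles $[-\bu,\bv]$ depend Lipschitz-continuously on $(\bu,\bv)$ on $[\bm 0,c\bm 1]^2$, this yields the sub-Gaussian increment bound with the intrinsic semimetric comparable to Euclidean distance, and the index set is a bounded subset of $\RR^{2d}$, so the entropy integral converges and stochastic equicontinuity follows. One must also check the process is indexed by a set on which the limit $\sqrt{g_0(\bx_0)}H_2$ has (a version with) continuous sample paths, which is classical for this covariance (it is, up to the scaling $\prod_k(u_k+v_k)$ removed, a multivariate Brownian-sheet-type field).

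The main obstacle I anticipate is the tightness step done uniformly and \emph{conditionally} in $\P_0$-probability: one needs the increment-variance bounds and the Lindeberg-type negligibility to hold not just in expectation but with $\P_0$-probability tending to one, which forces replacing the random masses $\sum N_{\bj}$ over sub-rectangles by their expectations $\sum \E_0 N_{\bj}=n\sum\theta_{0,\bj}$ with a uniform error. This uniformity over all sub-rectangles of the compact box is exactly a Bennett/Bernstein maximal-inequality argument of the type already used in the proof of Theorem~\ref{thm:testingadaptive} (controlling $\max_{\bj}|N_{\bj}/n-\theta_{0,\bj}|$ and its block sums), and the condition $\prod_k J_k\ll n\omega_n$ together with $J_k\gg r_{n,k}^{-1}$ is precisely what makes those blocks large enough for concentration yet fine enough for the Riemann-sum approximation of $g_0$ near $\bx_0$ to be $o(\omega_n)$. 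Once the conditional covariance and increment bounds are pinned down on an event of $\P_0$-probability $\to 1$, the conclusion follows from the standard weak-convergence criterion in $\LL_\infty$ of a compact set; this is also the point where one can simply cite the analogous argument in \cite{Kang_Coverage} and \cite{han2020} for the parallel regression computation, as the authors indicate they will do.
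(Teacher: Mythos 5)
Your proposal is correct and follows essentially the same route as the paper: finite-dimensional convergence via the conditional variance/covariance computation (with the $\alpha_{\bj}$ contribution killed by $\prod_k J_k\ll n\omega_n$ and the $N_{\bj}$ mass concentrating by a binomial variance bound), plus asymptotic tightness from increment bounds controlled by the symmetric-difference measure and a chaining argument via Theorem~2.2.4 of \cite{van1996weak}. One caveat: the increments are sums of centered Gamma variables, which are sub-exponential rather than sub-Gaussian, so a $\psi_2$-Orlicz bound at all scales is not available; the paper instead uses polynomial moments of order $4d+2$ (via Lemma~\ref{lemma:gammamoments}) with the semimetric $\rho((\bu,\bv),(\bu',\bv'))=\sqrt{\|\bu-\bu'\|+\|\bv-\bv'\|}$ (note the square root --- the intrinsic distance is the square root of the Euclidean one, not the Euclidean one itself), and your $\psi_1$ fallback would also work since the entropy integral still converges.
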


\begin{proof}
Conditional on $\DD_n$, the expectation of $Y_n$ is zero and the variance is given by
\begin{align*}
    \text{Var}(Y_n(\bu,\bv;V)|\DD_n)=\omega_n^2 
    \sum_{\bj\in [j(-\bu):j(\bv)]} (\alpha_{\bj} + N_{\bj}).
\end{align*}
By Assumption 1, we have
\begin{multline*}
\omega_n^2\sum_{\bj\in [j(-\bu):j(\bv)]} \alpha_{\bj}\leq a_1 \cdot \omega_n^2\prod_{k=1}^d (r_{n,k}u_k + r_{n,k}v_k + 2J_k^{-1})J_{k} \\
= a_1 \cdot n^{-1} \prod_{k=1}^d J_k \prod_{k=1}^d (u_k+v_k+2r_{n,k}^{-1}J_k^{-1}),
\end{multline*}
as $\omega_n^2\prod_{k=1}^d r_{n,k} = n^{-1}$. By noting that $\prod_{k=1}^d J_k \ll n \omega_n \ll n$, $r_{n,k}^{-1} \ll J_k$, and $u_k, v_k \le c$ for every $1\le k \le d$, it follows that $\omega_n^2\sum_{[j(-\bu):j(\bv)]}\alpha_{\bj}\to 0$.

Since $g_0$ is differentiable, and hence is continuous at $\bx_0$,
\begin{align*}
\E_0\big(\omega_n^2 \sum_{\bj\in [j(-\bu):j(\bv)]} N_{\bj}\big) &= n\omega_n^2 \int_{\cup\{I_{\bj}: \bj\in [j(-\bu):j(\bv)]\} }g_0(\bx)\d \bx \\
 &=  n\omega_n^2 (g_0(\bx_0)+o(1)) \prod_{k=1}^d (r_{n,k}u_k + r_{n,k}v_k + O(J_k^{-1})) \\
 & \to  g_0(\bx_0)\prod_{k=1}^d (u_k+v_k),
\end{align*}
as $\omega_n^2\prod_{k=1}^d r_{n,k} = n^{-1}$ and $r_{n,k}^{-1} \ll J_k$.
Since $\sum_{\bj\in [j(-\bu):j(\bv)]}N_{\bj}$ is binomially distributed, we similarly have
\begin{multline*}
\text{Var}\Big(\omega_n^2 \sum_{\bj\in [j(-\bu):j(\bv)]} N_{\bj}\Big)
 \leq n\omega_n^4 \int_{\cup\{I_{\bj}: \bj\in [j(-\bu):j(\bv)]\} }g_0(\bx)\d \bx   \\
 \lesssim \omega_n^2g_0(\bx_0)\prod_{k=1}^d \max\{u_k+v_k, 2J_k^{-1} r_{n,k}^{-1}\} \to 0.
\end{multline*}
Thus $\text{Var}(Y_n(\bu,\bv;V)|\DD_n)\to_{\P_0} g_0(\bx_0)\prod_{k=1}^d(u_k + v_k)$.

We shall apply Lyapunov's central limit theorem using bounds for the  fourth moment to show the marginal convergence. Using the moment bound  $\E((V_{\bj}- \E(V_{\bj}|\DD_n))^4|\DD_n) \lesssim (\alpha_{\bj}+N_{\bj})^2$, and observing that 
\begin{multline*}
\omega_n^4\sum_{\bj\in [j(-\bu):j(\bv)]}(\alpha_{\bj} + N_{\bj})^2 \lesssim \omega_n^4\sum_{\bj\in [j(-\bu):j(\bv)]} N_{\bj}^2  \\
    \lesssim \omega_n^4 \prod_{k=1}^d(u_k r_{n,k} + v_k r_{n,k})J_k \big(\frac{n}{\prod_{k=1}^d J_k}\big)^2  
    \lesssim \frac{n \omega^2_n}{\prod_{k=1}^d J_k} = \frac{\prod_{k=1}^d r_{n,k}^{-1}}{\prod_{k=1}^d J_k},
\end{multline*}
with $\P_0$-probability tending to one, as $r_{n,k}^{-1} \ll J_k$ and $u_k, v_k \le c$ for every $k$. Comparing this with the asymptotic variance, it is clear that Lyapunov's condition holds. Thus for a fixed $(\bu,\bv)$, the conditional distribution of $Y_n(\bu,\bv,V)$ given the data converges weakly to $\sqrt{g_0(\bx_0)\prod_{k=1}^d(u_k +v_k)}\N(0,1)$ in $\P_0$-probability.  
The conclusion easily extends to finite-dimensional joint distributions by observing that for $(\bu,\bv)$ and $(\bu',\bv')\in [\bm{0},c\bm{1}]\times[\bm{0},c\bm{1}]$,
\begin{multline*}
    \E(Y_n(\bu,\bv;V) \cdot Y_n(\bu',\bv';V)|\DD_n)
    \\
    =\omega^2_n \sum_{\bj \in [j(-\bu\wedge\bu'):j(\bv\wedge\bv')]} \text{Var}(V_j|\DD_n)\to   g_0(\bx_0)\prod_{k=1}^d( u_k\wedge u_k' + v_k\wedge v_k'),
\end{multline*}
in $\P_0$-probability.

We now show the asymptotic tightness of the conditional law of the process given by $Y_n(\bu,\bv; V)$ given the data in $\LL_{\infty}([\bm{0},c\bm{1}]\times[\bm{0},c\bm{1}])$. We apply Theorem 2.2.4 of  \cite{van1996weak} with the function $\psi: x\mapsto x^{4d+2}$, and use the metric $\rho((\bu,\bv),(\bu',\bv'))=\sqrt{\|\bu-\bu'\|+\|\bv-\bv'\|}$ on $[\bm{0},c\bm{1}]\times[\bm{0},c\bm{1}]$. We first calculate the $(4d+2)$-th moment of the increments, conditional on the data. By Lemma \ref{lemma:gammamoments}, we have that 
\begin{align}
   &\E(|Y_n(\bu,\bv;V)-Y_n(\bu',\bv';V)|^{4d+2}|\DD_n) \nonumber\\
 &=\omega_n^{4d+2}\E\big(\big(\sum_{\bj\in [j(-\bu):j(\bv)]\triangle[j(-\bu'):j(\bv')]} (V_{\bj}-\E(V_{\bj}|\DD_n))\big)^{4d+2} | \DD_n \big) \nonumber \\
 &\lesssim  \omega_n^{4d+2}\big(\sum_{\bj\in [j(-\bu):j(\bv)]\triangle[j(-\bu'):j(\bv')]} N_{\bj}\big)^{2d+1},
  \label{formula:stirling}
\end{align}
with $\P_0$-probability tending to one, by noting that $\min \{N_{\bj}: \bj \in [\bm{1}:\bJ]\}\to_{\P_0}\infty$ and $\{\alpha_{\bj}\}$ is uniformly bounded.
With $\P_0$-probability tending to one, we have 
\begin{align*}
    \sum_{\bj\in [j(-\bu):j(\bv)]\triangle[j(-\bu'):j(\bv')]} N_{\bj} 
    \lesssim & n\prod_{k=1}^d r_{n,k} \big(\prod_{k=1}^d (u_k\vee u_k' + v_k\vee v_k') - \prod_{k=1}^d (u_k\wedge u_k' + v_k\wedge v_k')\big)\\
     =  & \omega^{-2}_n \big(\prod_{k=1}^d (u_k\vee u_k' + v_k\vee v_k') - \prod_{k=1}^d (u_k\wedge u_k' + v_k\wedge v_k')\big).
\end{align*}
Thus by Lemma \ref{lemma:delta}, \eqref{formula:stirling} is bounded by a constant multiple of $(\|\bu-\bu'\|+\|\bv-\bv'\|)^{2d+1}$ with $\P_0$-probability tending to one.
The $\epsilon$-packing number of $[\bm{0},c\bm{1}]\times[\bm{0},c\bm{1}]$ with respect to the metric $\rho$ is bounded by a constant multiple of $\epsilon^{-4d}$.
Then by Theorem 2.2.4 of  \cite{van1996weak}, by taking $\eta = \delta^{(2d+1)/(4d+1)}$, 
\begin{multline*}
    \| \sup\{|Y_n(\bu,\bv;V)-Y_n(\bu',\bv';V)|: \rho((\bu,\bv),(\bu',\bv'))\leq \delta \}\|_{4d+2} 
    \\
    \lesssim \int_0^{\eta} (\epsilon^{-4d})^{1/(4d+2)} \d\epsilon + \delta (\eta^{-8d})^{1/(4d+2)},
\end{multline*}
which is of the order of  $\delta^{1/(4d+1)}$. 
Hence the process $Y_n(\bu, \bv;V)$ is asymptotically uniformly equicontinuous. 
This concludes the proof of asymptotic tightness. 
\end{proof}

\begin{lemma}\label{lemma:P_nweaklimit}
Let $P_n(\bu, \bv) = \omega_n\sum_{\bj\in [j(-\bu):j(\bv)]} (N_{\bj} - \E_0 (N_{\bj}))$ and $J_k \gg r_{n,k}^{-1}$.
Then $P_n(\bu, \bv) \rightsquigarrow \sqrt{g_0(\bx_0)}H_1(\bu, \bv)$ in $\LL_{\infty}([\bm{0}, c\bm{1}]\times [\bm{0},c\bm{1}])$.
\end{lemma}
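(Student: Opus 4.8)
The plan is to prove this weak convergence in $\LL_\infty([\bm{0},c\bm{1}]\times[\bm{0},c\bm{1}])$ by the usual two-step route, finite-dimensional convergence plus asymptotic tightness, running the argument in parallel with --- but more simply than --- the treatment of $Y_n$ in the proof of Lemma~\ref{lem:Yn}, the simplification coming from the fact that $N_{\bj}$ is bounded, so classical moment bounds for centered binomials replace the Gamma-moment estimates. The one structural remark to make at the outset is that for a fixed $(\bu,\bv)$ the quantity $\sum_{\bj\in[j(-\bu):j(\bv)]}N_{\bj}$ is a single binomial count $\sum_{i=1}^n\Ind_{B_n(\bu,\bv)}(\bX_i)$, where $B_n(\bu,\bv)=\cup\{I_{\bj}:\bj\in[j(-\bu):j(\bv)]\}$ is a hyperrectangle that shrinks to $\{\bx_0\}$.

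First I would establish the finite-dimensional convergence. Using the continuity of $g_0$ at $\bx_0$ and the hypothesis $J_k\gg r_{n,k}^{-1}$, one gets $p_n(\bu,\bv):=\P_0(B_n(\bu,\bv))=(g_0(\bx_0)+o(1))\prod_{k=1}^d(u_k+v_k+O(J_k^{-1}r_{n,k}^{-1}))r_{n,k}$, whence $\omega_n^2 n\,p_n(\bu,\bv)\to g_0(\bx_0)\prod_{k=1}^d(u_k+v_k)$ because $\omega_n^2\prod_{k=1}^d r_{n,k}=n^{-1}$, and $p_n(\bu,\bv)\to0$. Consequently $\E_0 P_n(\bu,\bv)=0$, $\mathrm{Var}_0(P_n(\bu,\bv))=\omega_n^2 n\,p_n(\bu,\bv)(1-p_n(\bu,\bv))\to g_0(\bx_0)\prod_{k=1}^d(u_k+v_k)$, and for two index points, since $B_n(\bu,\bv)\cap B_n(\bu',\bv')=B_n(\bu\wedge\bu',\bv\wedge\bv')$ up to negligible discretization, $\E_0(P_n(\bu,\bv)P_n(\bu',\bv'))=\omega_n^2 n(\P_0(B_n(\bu\wedge\bu',\bv\wedge\bv'))-p_n(\bu,\bv)p_n(\bu',\bv'))\to g_0(\bx_0)\prod_{k=1}^d(u_k\wedge u_k'+v_k\wedge v_k')$, the product term being $O(\omega_n^2 n\prod_k r_{n,k}^2)\to0$; this is precisely the covariance of $\sqrt{g_0(\bx_0)}H_1$. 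Lyapunov's condition, via Cram\'er--Wold over finitely many points, holds because $\E_0|\Ind_{B_n(\bu,\bv)}(\bX_1)-p_n(\bu,\bv)|^q\le p_n(\bu,\bv)$ for $q\ge2$, so the Lyapunov ratio vanishes exactly as in Lemma~\ref{lem:Yn}. Hence the finite-dimensional laws of $P_n$ converge to those of $\sqrt{g_0(\bx_0)}H_1$.

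Next I would prove asymptotic tightness by transcribing the moment/entropy computation of Lemma~\ref{lem:Yn}: apply Theorem~2.2.4 of \cite{van1996weak} with $\psi(x)=x^{4d+2}$ and the metric $\rho((\bu,\bv),(\bu',\bv'))=\sqrt{\|\bu-\bu'\|+\|\bv-\bv'\|}$. The increment $P_n(\bu,\bv)-P_n(\bu',\bv')$ equals $\omega_n$ times a centered binomial count over the symmetric difference $\Delta=[j(-\bu):j(\bv)]\triangle[j(-\bu'):j(\bv')]$, with success probability $p_\Delta$, so the classical bound $\E|\mathrm{Bin}(n,p)-np|^{4d+2}\lesssim(np)^{2d+1}+np$ gives $\E_0|P_n(\bu,\bv)-P_n(\bu',\bv')|^{4d+2}\lesssim\omega_n^{4d+2}((n p_\Delta)^{2d+1}+n p_\Delta)$. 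Using $J_k\gg r_{n,k}^{-1}$ once more, $n p_\Delta\lesssim\omega_n^{-2}(\prod_k(u_k\vee u_k'+v_k\vee v_k')-\prod_k(u_k\wedge u_k'+v_k\wedge v_k'))\lesssim\omega_n^{-2}(\|\bu-\bu'\|+\|\bv-\bv'\|)$ by Lemma~\ref{lemma:delta}, so the leading term is bounded by a constant multiple of $(\|\bu-\bu'\|+\|\bv-\bv'\|)^{2d+1}=\rho((\bu,\bv),(\bu',\bv'))^{2(2d+1)}$ while the remainder is of smaller order since $\omega_n^{4d}\to0$. Since the $\epsilon$-packing number of $[\bm{0},c\bm{1}]\times[\bm{0},c\bm{1}]$ under $\rho$ is $O(\epsilon^{-4d})$, the entropy integral $\int_0^\eta(\epsilon^{-4d})^{1/(4d+2)}\,\d\epsilon$ converges and Theorem~2.2.4 produces a modulus-of-continuity bound of order $\delta^{1/(4d+1)}$, giving asymptotic uniform equicontinuity and hence tightness; combined with the first step and the identification of the limiting covariance, this yields $P_n\rightsquigarrow\sqrt{g_0(\bx_0)}H_1$ in $\LL_\infty([\bm{0},c\bm{1}]\times[\bm{0},c\bm{1}])$.

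The step I expect to be the main obstacle is the bookkeeping of the discretization: as $(\bu,\bv)$ moves continuously the integer corners $j(-\bu)$ and $j(\bv)$ jump, and one must check that the errors so introduced are of smaller order than the genuine local scale $r_{n,k}$ --- this is exactly where the hypothesis $J_k\gg r_{n,k}^{-1}$ is used, both to pin down the limiting covariance and to bound the symmetric-difference mass $p_\Delta$ by the metric $\rho$. Once this is in hand, the moment and entropy estimates are routine and essentially identical to those already carried out in the proof of Lemma~\ref{lem:Yn}.
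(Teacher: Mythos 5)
Your finite-dimensional step is essentially the computation the paper also makes (it verifies $n\omega_n^2\E_0 Z_{n,1}(\bu,\bv)\to g_0(\bx_0)\prod_{k=1}^d(u_k+v_k)$ and the covariance limit); your explicit Cram\'er--Wold/Lyapunov argument is a harmless addition, since the summands are bounded by $\omega_n\to 0$ and Lindeberg is automatic. Where you genuinely diverge is the tightness step: the paper treats $P_n$ as a triangular array of empirical processes and applies the bracketing CLT, Theorem~2.11.9 of \cite{van1996weak}, with an explicit partition of the index set, whereas you transcribe the moment-chaining argument used for $Y_n$ in Lemma~\ref{lem:Yn} (Theorem~2.2.4 with $\psi(x)=x^{4d+2}$), replacing the Gamma-moment estimates by Rosenthal-type bounds for centered binomials. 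The trade-off is that Theorem~2.11.9 needs only second moments and its hypotheses are ``eventual'' statements in $n$, which is exactly what absorbs the discretization error, while Theorem~2.2.4 requires a moment-Lipschitz bound valid at \emph{every} scale of the semimetric; this difference is where your version develops a gap.

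Two points in your tightness argument need repair. First, the additive term $\omega_n^{4d+2}np_\Delta$ is not disposed of ``because $\omega_n^{4d}\to 0$'': for Theorem~2.2.4 you must dominate it by $\rho(s,t)^{4d+2}$, and the correct reason it can be absorbed is that any nonempty symmetric difference contains at least one full bin, so $np_\Delta\gtrsim n/\prod_{k=1}^d J_k\to\infty$ (using $\prod_{k=1}^d J_k\ll n\omega_n$ and $g_0(\bx_0)>0$), whence $(np_\Delta)^{2d+1}\gg np_\Delta$. Second, and more seriously, the bound $np_\Delta\lesssim \omega_n^{-2}\bigl(\|\bu-\bu'\|+\|\bv-\bv'\|\bigr)$ fails below the bin scale: if $(\bu,\bv)$ and $(\bu',\bv')$ are arbitrarily close but the pair $(j(-\bu),j(\bv))$ differs from $(j(-\bu'),j(\bv'))$, the symmetric difference is a slab of thickness $J_k^{-1}$ in some coordinate, and Lemma~\ref{lemma:delta} only yields $\omega_n^{2}np_\Delta\lesssim \|\bu-\bu'\|+\|\bv-\bv'\|+\max_{1\le k\le d}(J_kr_{n,k})^{-1}$, with an additive term that does not vanish as $\rho\to 0$ for fixed $n$. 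Consequently the hypothesis of Theorem~2.2.4, namely $\|P_n(s)-P_n(t)\|_{4d+2}\lesssim\rho(s,t)$ for all pairs, does not hold as stated, and the chaining cannot be run verbatim over the continuum index set (the proof of Lemma~\ref{lem:Yn} that you are copying silently drops the same term). This is fixable with a standard device --- chain over the finitely many attained corner pairs $(j(-\bu),j(\bv))$, or use the inflated semimetric $\rho_n^2=\rho^2+\max_k(J_kr_{n,k})^{-1}$ and let $n\to\infty$ before $\delta\to 0$ --- but it must be said explicitly; the paper's recourse to Theorem~2.11.9, whose second-moment and bracketing conditions only require the discretization term to be eventually smaller than a prescribed $\delta$, is precisely the way it sidesteps this issue for $P_n$.
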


\begin{proof}
Let $Z_{n,i}(\bu,\bv) = \omega_n \Ind_{ \bigcup \{I_{\bj}: \bj\in [j(-\bu):j(\bv)]\}}(\bX_i)$ for $i = 1,\ldots, n$.
We rewrite $P_n(\bu, \bv)= \omega_n \sum_{i = 1}^n Z_{n,i}(\bu,\bv) - \E_0 Z_{n,i}(\bu,\bv).$ 
First, we verify the finite-dimensional convergence.
For $(\bu,\bv)$ and $(\bu',\bv')\in [\bm{0},c\bm{1}]\times[\bm{0},c\bm{1}]$, 
\begin{align*}
    \text{Cov}(P_n(\bm{u},\bm{v}),P_n(\bm{u}', \bm{v}'))  
  = n\omega_n^2 \E_0 Z_{n, 1}(\bm{u}\wedge \bm{u}', \bm{v}\wedge \bm{v}')
  - n\omega^2_n \E_0 Z_{n, 1}(\bu, \bv)  \cdot \E_0 Z_{n, 1}(\bu', \bv').
\end{align*}
We observe that 
\begin{multline*}
n\omega_n^2\E_0 Z_{n, 1}(\bu, \bv) = n\omega_n^2 \int_{\bigcup\{I_{\bj}: \bj\in [j(-\bm{u}):j(\bm{v})]\}  } g_0(\bx)\d\bx  
   \\
   = (g_0(\bx_0)+o(1)) \prod_{k=1}^d (u_k + v_k + O((J_k r_{n,k})^{-1})) 
  \to g_0(\bx_0)\prod_{k=1}^d (u_k + v_k),
\end{multline*}
uniformly for all $(\bu, \bv )\in [\bm{0}, c\bm{1}]\times [\bm{0}, c\bm{1}]$ as $J_k\gg r_{n,k}^{-1}$. Thus 
\begin{align*}
    \text{Cov}(P_n(\bm{u},\bm{v}),P_n(\bm{u}', \bm{v}')) \to g_0(\bx_0)\prod_{k=1}^d(u_k\wedge u'_k + v_k\wedge v_k').
\end{align*}

Next, we apply Theorem 2.11.9 of \cite{van1996weak} to show asymptotic tightness.
Take $m_n = n$ and $\cF = \{ (\bu,\bv)\in [\bm{0}, c\bm{1}]\times [\bm{0},c\bm{1}] \}$. By noting that $\|Z_{n, i}\|_{\cF} = \omega_n \Ind_{\bigcup\{I_{\bj}: \bj\in [j(-c\bm{1}):j(c\bm{1})]\}}(\bX_i)$ converges to $0$, and hence for every $\eta>0$,
\begin{align*}
    \sum_{i=1}^n \E_0 \|Z_{n, i}\|^2_{\cF} \Ind\{\|Z_{n, i}\|_{\cF}>\eta\}  = 0
\end{align*}
for sufficiently large $n$, trivially verifying the first condition of their theorem. 

Let $\varrho((\bu,\bv),(\bu',\bv')) = \|\bu-\bu'\|+\|\bv-\bv'\|$. Then 
\begin{eqnarray*}
    \lefteqn{ \sum_{i=1}^n \E_0 (Z_{n, i}(\bu,\bv) - Z_{n, i}(\bu',\bv'))^2}\\
    &&=  n\omega_n^2 \E_0|Z_{n, 1}(\bu,\bv) - Z_{n, 1}(\bu',\bv')|= n\omega_n^2 \int_{\cup\{I_{\bj}:\bj\in [j(-\bu): j(\bv)]\triangle [j(-\bu'):j(\bv')]\}}  g_0(\bx) \d\bx \\
    && \lesssim  \prod_{k=1}^d (u_k + v_k + 2(J_k r_{n,k})^{-1}) + \prod_{k=1}^d (u'_k + v'_k + 2(J_k r_{n,k})^{-1})-2\prod_{k=1}^d (u_k\wedge u'_k + v_k\wedge v'_k)\\
    &&\lesssim  \varrho((\bu,\bv),(\bu',\bv'))+ \max_{1\le k\le d} (J_k r_{n,k})^{-1},
\end{eqnarray*}
where the last inequality follows from Lemma \ref{lemma:delta}. Hence,
$\sum_{i=1}^n \E_0 (Z_{n, i}(\bu,\bv) - Z_{n, i}(\bu',\bv'))^2 \to 0,$
as $\varrho((\bu,\bv),(\bu',\bv')) \to 0$ and $n\to\infty$.

Next, for any $\epsilon>0$, we construct a partition of $\cF$ as follows. Choose a $\delta>0$, to be determined later, which depends only on $\epsilon$. For the interval $[0,c]$, with equispaced grid points $0=s_0 < s_1 <\ldots < s_l = c$, the partition of $(0,c]^d\times (0,c]^d$ is given by $\big\{\prod_{k=1}^{d}(s_{t_k-1},s_{t_k}] \times \prod_{k=1}^{d}(s_{r_k-1},s_{r_k}]: t_k, r_k\in \{1,\ldots, l\}\big\}$. Then  $ \mathcal{F}\subset \bigcup_{\bm{t},\bm{r}\in\{1,\ldots,l\}^d} \mathcal{F}_{\bm{t},\bm{r}}$, where 
 $  \mathcal{F}_{\bm{t},\bm{r}}= \big\{
    (\bu,\bv) \in (0,c]^d\times (0,c]^d: s_{t_k-1}< u_k \leq s_{t_k}, s_{r_k-1}< v_k \leq s_{r_k}
    \big\}.$
Now 
\begin{align*}
   & \sum_{i=1}^{n}\E_0 \sup_{f,g\in \mathcal{F}_{\bm{t},\bm{r}}} |Z_{n, i}(f)-Z_{n, i}(g)|^2 \\
   & \leq n\omega^2_n\E_0\big(\Ind_{\bigcup\{I_{\bj}:\bj\in [j(-\bm{s}_{\bm{t}}):j(\bm{s}_{\bm{r}})]\} }(\bX_1)-\Ind_{\bigcup\{I_{\bj}: \bj\in  [j(-\bm{s}_{\bm{t}-\bm{1}}):j(\bm{s}_{\bm{r}-\bm{1}})] \}}(\bX_1)\big)^2\\
   & \lesssim \prod_{k=1}^d (s_{t_k}+s_{r_k}+2(J_k r_{n,k})^{-1})- \prod_{k=1}^d(s_{t_k-1}+s_{r_k-1})  \\
   & \lesssim  \delta + \max_{1\le k\le d} (J_k r_{n,k})^{-1},
\end{align*}
where the last inequality follows from Lemma \ref{lemma:delta}.
As $J_k\gg r_{n,k}^{-1}$ for $k=1,\ldots, d$,
the second term in the last display will be eventually smaller than a prescribed $\delta>0$.
Thus $\delta$ can be set to be a multiple of $\epsilon^2$ to meet the second condition. Meanwhile, the bracketing number with respect to the semimetric $\LL_2^n$ defined therein, $\mathcal{N}_{[\,]}(\epsilon,\mathcal{F}, L_2^n)\leq l^{2d}\leq (2c/\delta)^{2d}\lesssim \epsilon^{-4d}$.
Then there exists a positive constant $C$ such that
\begin{align*}
    \int_0^{\delta_n}\sqrt{\log \mathcal{N}_{[\,]}(\epsilon, \mathcal{F}, L_2^n)}\, \d\epsilon \leq \int_0^{\delta_n} \sqrt{\log (C\epsilon^{-4d})}\,\d\epsilon\to 0,
\end{align*}
as $\delta_n\to 0$. This verifies all three conditions in Theorem 2.11.9 of  \cite{van1996weak}, and hence concludes the weak convergence of $P_{n}$ in $\LL_{\infty}([\bm{0},c\bm{1}]\times [\bm{0},c\bm{1}])$. 
\end{proof}

\begin{lemma}\label{lemma:AsposP1}
    Under the conditions of Theorem \ref{thm:weaklimit}, for any $\tau > 0$, it holds that 
    \begin{align*}
        \E_0 \sup_{
        \substack{\bu\succeq \tau\bm{1},\\
        \bv\succeq \tau\bm{1}
        }}
        \left|\frac{\sum_{[j(-\bu):j(\bv)]}(N_{\bj}-\E_0(N_{\bj}))/n}{\prod_{k=1}^d (j(\bv)_{k}-j(-\bu)_{k}+1)J_k^{-1}}\right| \lesssim \omega_n.
    \end{align*}
\end{lemma}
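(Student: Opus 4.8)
The plan is to recognize the expression inside the supremum as a normalized empirical process indexed by a family of hyperrectangles through $\bx_0$, and then to estimate its expected supremum by a maximal inequality with a carefully decaying envelope. Writing $\tilde R(\bu,\bv)=\bigcup\{I_{\bj}:\bj\in[j(-\bu):j(\bv)]\}$, this is a grid-aligned hyperrectangle with $\bx_0\in\tilde R(\bu,\bv)$; as $\bu,\bv\succeq\tau\bm{1}$ and $J_k\gg r_{n,k}^{-1}$, each side spans a number of bins that tends to infinity, and one has exactly $\sum_{[j(-\bu):j(\bv)]}N_{\bj}/n=\P_n(\tilde R(\bu,\bv))$, $\sum_{[j(-\bu):j(\bv)]}\E_0(N_{\bj})/n=\P_0(\tilde R(\bu,\bv))$, and $\prod_{k=1}^{d}(j(\bv)_k-j(-\bu)_k+1)J_k^{-1}=\lambda(\tilde R(\bu,\bv))$, the Lebesgue measure, where $\P_n$ is the empirical measure of $\DD_n$. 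Hence the displayed quantity equals $\E_0\sup_{f\in\mathcal{F}}|(\P_n-\P_0)f|$, where $\mathcal{F}=\{\Ind_R/\lambda(R):R\in\mathcal{R}\}$ and $\mathcal{R}$ is the family of all hyperrectangles $R\subseteq[0,1]^d$ with $\bx_0\in R$ and $k$th side of length at least $\tau r_{n,k}$ (an enlargement of the grid-aligned subfamily, which is contained in it for large $n$).

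The decisive choice is the envelope $F(\bx)=\prod_{k=1}^{d}(|x_k-x_{0,k}|\vee\tau r_{n,k})^{-1}$: any $R\in\mathcal{R}$ containing $\bx$ has $k$th side at least $|x_k-x_{0,k}|\vee\tau r_{n,k}$, so $\Ind_R(\bx)/\lambda(R)\le F(\bx)$. By the product form of $F$ together with the elementary bound $\int_0^1(|t-x_{0,k}|\vee\tau r_{n,k})^{-2}\,\d t\lesssim(\tau r_{n,k})^{-1}$, one gets $\P_0 F^2\lesssim_{\tau}\|g_0\|_{\infty}\prod_{k=1}^{d}r_{n,k}^{-1}=\|g_0\|_{\infty}\,\omega_n^{-\sum_k 1/\eta_k}$, using $\prod_k r_{n,k}=\omega_n^{\sum_k 1/\eta_k}$; one also checks $\sup_{f\in\mathcal{F}}\P_0 f^2=\sup_{R\in\mathcal{R}}\P_0(R)/\lambda(R)^2\le\|g_0\|_{\infty}/\inf_{R\in\mathcal{R}}\lambda(R)\lesssim_{\tau}\|g_0\|_{\infty}\,\omega_n^{-\sum_k 1/\eta_k}$, so the supremal second moment over $\mathcal{F}$ is comparable to $\|F\|_{\P_0,2}^2$ — this matching is exactly what rules out a spurious logarithmic factor from variance localization. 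I would then invoke the standard maximal inequality $\E_0\sup_{f\in\mathcal{F}}|\sqrt{n}(\P_n-\P_0)f|\lesssim J(1,\mathcal{F})\,\|F\|_{\P_0,2}$ for classes with envelope $F$ (e.g. Theorem 2.14.1 of \cite{van1996weak}); provided $\mathcal{F}$ is VC-type with envelope $F$ and characteristics depending only on $d$ — so that $J(1,\mathcal{F})\lesssim_d 1$ — this gives $\E_0\sup_{f\in\mathcal{F}}|(\P_n-\P_0)f|\lesssim_d n^{-1/2}\|F\|_{\P_0,2}\lesssim_{\tau}n^{-1/2}\omega_n^{-\frac12\sum_k 1/\eta_k}=\omega_n$, the last equality using $n^{-1}=\omega_n^{2+\sum_k 1/\eta_k}$; the lower-order remainder, of order $\|F\|_{\infty}/n\asymp_{\tau}\omega_n^2$, is negligible.

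To establish the VC-type property I would handle the two directions of variation of $\Ind_R/\lambda(R)$ in turn. For a fixed finitely supported $Q$, passing to the reweighted probability measure $\d\tilde Q\propto F^2\,\d Q$ turns the problem of covering $\{F\Ind_R:R\in\mathcal{R}\}$ to within $\epsilon\|F\|_{Q,2}$ in $L_2(Q)$ into that of covering the subfamily $\mathcal{R}$ of the VC class of all hyperrectangles to within $\epsilon$ in $L_2(\tilde Q)$, which costs at most $(A/\epsilon)^{v}$ for $A,v$ depending only on $d$; and, since $\|\Ind_R/\lambda(R)\|_{L_2(Q)}\le\|F\|_{Q,2}$ for every $R$, the remaining scalar parameter $1/\lambda(R)$ contributes only $O(1/\epsilon)$ additional points to the cover, so that $\sup_Q\mathcal{N}(\epsilon\|F\|_{Q,2},\mathcal{F},L_2(Q))\le(A'/\epsilon)^{v'}$ with $A',v'$ depending only on $d$.

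The step I expect to be the main obstacle is exactly this entropy bookkeeping for the \emph{weighted} class, because the weight $1/\lambda(R)$ is coupled to the set $R$: used carelessly, e.g. with the constant envelope $\|F\|_{\infty}\Ind_{[0,1]^d}$, the maximal inequality only delivers $\omega_n\sqrt{\log n}$, and a blunt dyadic peeling of the supremum in the volume $\lambda(R)$ likewise leaves a slowly growing residual factor; the point is that one must carry the decaying envelope $F$ through the chaining — equivalently, measure the complexity of $\mathcal{R}$ against the $F^2$-reweighted measures rather than uniformly over $Q$. Everything else is the short arithmetic above linking $\omega_n$ and $n$, together with the one-dimensional integral estimate for $\P_0 F^2$.
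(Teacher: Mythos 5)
Your route is genuinely different from the paper's. The paper proves the lemma by dyadic peeling in $(\bu,\bv)$: on each block $\{2^{h_k}\le u_k\le 2^{h_k+1},\,2^{h'_k}\le v_k\le 2^{h'_k+1}\}$ it lower-bounds the denominator by $n\prod_k r_{n,k}(2^{h_k}+2^{h'_k})$, applies Cauchy--Schwarz and a multidimensional Doob-type maximal inequality (Lemma A.5 of \cite{Kang_Coverage}) to reduce the maximum of the centered rectangle sums to the second moment at the corner rectangle, bounds that by the binomial variance $n\int g_0\lesssim n\prod_k r_{n,k}(2^{h_k}+2^{h'_k})$, and finishes by summability of $\sum_{\bm{h},\bm{h}'}\prod_k(2^{h_k}+2^{h'_k})^{-1/2}$; no entropy or envelope calculus is needed. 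Your identification of the quantity as $\E_0\sup_{R}|(\P_n-\P_0)(\Ind_R/\lambda(R))|$ over hyperrectangles through $\bx_0$ with sides $\ge\tau r_{n,k}$, the envelope $F(\bx)=\prod_k(|x_k-x_{0,k}|\vee\tau r_{n,k})^{-1}$, the bound $\P_0F^2\lesssim\prod_k r_{n,k}^{-1}$, and the arithmetic $n^{-1/2}\|F\|_{\P_0,2}\asymp(n\prod_k r_{n,k})^{-1/2}=\omega_n$ are all correct, and you correctly diagnose why a constant envelope or variance-only localization loses a logarithmic factor. If the uniform-entropy step holds with constants depending only on $d$ (and $\tau$), Theorem 2.14.1 of \cite{van1996weak} indeed yields the lemma.

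The gap is exactly the step you flag: your justification of the VC-type property of the weighted class is not adequate as written. First, the class is $\{\Ind_R/\lambda(R)\}$, not $\{F\Ind_R\}$, so covering the latter plus a ``scalar'' grid does not directly cover the former: in the decomposition $\Ind_R/\lambda(R)-\Ind_{R'}/\lambda(R')=(\Ind_R-\Ind_{R'})/\lambda(R)+\Ind_{R'}(1/\lambda(R)-1/\lambda(R'))$ the first term is not dominated by $F\,|\Ind_R-\Ind_{R'}|$ on $R'\setminus R$ when $\lambda(R)\ll\lambda(R')$, and the scalar $1/\lambda(R)$ ranges over $[1,\asymp\prod_k(\tau r_{n,k})^{-1}]$, so any additive or multiplicative grid whose error is measured against $\epsilon\|F\|_{Q,2}$ needs on the order of $\epsilon^{-1}\log n$ points, not $O(1/\epsilon)$; carried through the chaining this leaves a $\sqrt{\log\log n}$ factor, so the lemma's bound $\lesssim\omega_n$ is not obtained as stated (the weaker bound would still suffice for the one place the lemma is used, namely \eqref{part2}, but not for the lemma itself). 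The fix is to rescale by the envelope before counting: write $\Ind_R/\lambda(R)=F\,h_R$ with $h_R(\bx)=\Ind_R(\bx)\,w(\bx)/\lambda(R)$, $w=1/F$, so that $0\le h_R\le 1$, and note that the subgraph of $h_R$ is the intersection of the set $\{(\bx,t):\bx\in R\}$ (a VC family in $\RR^{d+1}$ of index depending only on $d$) with $\{(\bx,t):t\lambda(R)<w(\bx)\}$, a one-parameter family that is linearly ordered by inclusion; hence $\{h_R\}$ is VC-subgraph with index depending only on $d$, Theorem 2.6.7 of \cite{van1996weak} gives $\sup_Q\mathcal{N}(\epsilon,\{h_R\},\LL_2(Q))\le(A/\epsilon)^{v}$ with $A,v$ depending only on $d$, and your reweighting identity $\|F h_R-F h_{R'}\|_{\LL_2(Q)}=\|h_R-h_{R'}\|_{\LL_2(\tilde Q)}\|F\|_{Q,2}$ with $\d\tilde Q\propto F^2\d Q$ transfers this to the required envelope-relative bound. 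With that replacement your argument closes and gives $\lesssim\omega_n$; it is heavier machinery than the paper's peeling argument, but it packages the geometry into a single standard maximal inequality.
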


\begin{proof}
For ease of notation, we assume that $\tau = 1$.
\begin{multline}
\label{eqn:marti}
    \E_0 \sup_{\bu\succeq \bm{1}, \bv\succeq\bm{1}}\left|\frac{\sum_{\bj\in [j(-\bu):j(\bv)]}(N_{\bj}-\E_0(N_{\bj}))/n}{\prod_{k=1}^d (j(\bv)_{k}-j(-\bu)_{k}+1)J_k^{-1}}\right|
    \\
    \leq \sum_{\substack{h_k\geq 0\\1\leq k\leq d}}\sum_{\substack{h'_k\geq 0\\1\leq k\leq d}}
    \E_0 \max_{\substack{2^{h_k}\leq u_k\leq2^{h_k+1}\\2^{h'_k}\leq v_k\leq2^{h'_k+1}\\1\leq k\leq d}}\left|\frac{\sum_{\bj\in[j(-\bu+\bm{1}):j(\bv-\bm{1})]} (N_{\bj} - \E_0(N_{\bj}))}{n\prod_{k=1}^d (j(\bv)_{k}-j(-\bu)_{k}+1)J_k^{-1}}\right|.
\end{multline}
Let $\bm{h} = (h_1, \ldots, h_d)^{\trans}$ and $\bm{h}' = (h_1', \ldots, h_d')^{\trans}$. First, we have
\begin{multline}
\label{denomi}
    \min_{\substack{2^{h_k}\leq u_k\leq2^{h_k+1}\\2^{h'_k}\leq v_k\leq2^{h'_k+1}\\1\leq k\leq d}}
    n\prod_{k=1}^d (j(\bv)_{k}-j(-\bu)_{k}+1)J_k^{-1} 
    \\
     =   n\prod_{k=1}^d (j(2^{\bm{h}'})_{k}-j(-2^{\bm{h}})_{k}+1)J_k^{-1} 
    \ge   n\prod_{k=1}^d [r_{n,k} (2^{h_k} + 2^{h_k'})].
\end{multline}
By Jensen's inequality, 
\begin{align}\label{eqn:submart}
    \E_0 \max_{\substack{2^{h_k}\leq u_k\leq2^{h_k+1}\\2^{h'_k}\leq v_k\leq2^{h'_k+1}\\1\leq k\leq d}}\Big|\sum_{[j(-\bu):j(\bv)]} (N_{\bj} - \E_0(N_{\bj}))\Big|  
    \le \Big[\E_0 \max_{\substack{2^{h_k}\leq u_k\leq2^{h_k+1}\\2^{h'_k}\leq v_k\leq2^{h'_k+1}\\1\leq k\leq d}}\Big|\sum_{[j(-\bu):j(\bv)]} (N_{\bj} - \E_0(N_{\bj}))\Big|^2\Big]^{1/2}
\end{align}
By Lemma A.5 of  \cite{Kang_Coverage}, the right-hand side of \eqref{eqn:submart} is further bounded, up to a constant multiple depending only on $d$, by
$
    \Big[\E_0 \big|\sum_{\bj\in [j(-2^{\bm{h}+\bm{1}}):j(2^{\bm{h}'+\bm{1}})]} (N_{\bj} - \E(N_{\bj}))\big|^2 \Big]^{1/2}.
$
As $\sum_{\bj\in [j(-2^{\bm{h}+\bm{1}}):j(2^{\bm{h}'+\bm{1}})]}N_{\bj} \sim \text{Bin}(n, \int_{\bigcup\{I_{\bj}:\bj\in [j(-2^{\bm{h}+\bm{1}}):j(2^{\bm{h}'+\bm{1}})]\}} g_0(\bx)\d\bx)$, we have
\begin{multline*} 
\Big[\E_0 \big|\sum_{[j(-2^{\bm{h}+\bm{1}}):j(2^{\bm{h}'+\bm{1}})]} (N_{\bj} - \E(N_{\bj}))\big|^2 \Big]^{1/2}  
\\
 \le  \Big[ n \int_{\bigcup\{I_{\bj}:\bj\in [j(-2^{\bm{h}+\bm{1}}):j(2^{\bm{h}'+\bm{1}})]\}} g_0(\bx)\d\bx
    \Big]^{1/2}  
\lesssim  \Big[n\prod_{k=1}^dr_{n,k}(2^{h_k+1}+2^{h'_k+1})\Big]^{1/2},
\end{multline*}
as $g_0$ is bounded and $J_k\gg r_{n,k}^{-1}$.
Combining with \eqref{denomi}, we conclude that \eqref{eqn:marti} is bounded, up to some positive constant, by
$
\omega_n \sum_{\substack{h_k\geq 0\\1\leq k\leq d}}\sum_{\substack{h'_k\geq 0\\1\leq k\leq d}} \prod_{k=1}^d(2^{h_k}+2^{h'_k})^{-1/2}.
$
Then it remains to show 
the convergence of the sum of the series in the last display.
To see this, 
using the fact that $\sum_{h\geq 0} \sum_{h'\geq 0}(2^h+2^{h'})^{-1/2}$ converges, we have 
\begin{align*}
    \sum_{\substack{0\le h_k\le n_k\\1\leq k\leq d}}\sum_{\substack{0\le h'_k\le n_k'\\1\leq k\leq d}} \prod_{k=1}^d(2^{h_k}+2^{h'_k})^{-1/2} 
    = 
    \prod_{k=1}^d
    \Big(\sum_{0\le h_k\le n_k} \sum_{0\le h'_k\le n'_k}(2^{h_k}+2^{h_k'})^{-1/2}\Big),
\end{align*}
and it will converge to some positive constant when $n_k$ and $n_k'$ go to infinity.
\end{proof}

\begin{lemma}\label{lemma:contractionrate}
Under the conditions of Theorem \ref{thm:weaklimit}, for any $M_n \uparrow \infty$, 
we have $\Pi(|g^{\star}(\bx_0) - g_0(\bx_0)|\geq M_n\omega_n| \DD_n) \to_{\P_0} 0$.
\end{lemma}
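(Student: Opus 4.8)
The plan is to bound $W_n^{\star}:=\omega_n^{-1}(g^{\star}(\bx_0)-g_0(\bx_0))$, written via the decomposition displayed above as $W_n^{\star}=\min_{\bu\succeq\bm{0}}\max_{\bv\succeq\bm{0}}S_n(\bu,\bv)$ with $S_n=A_{n,1}+A_{n,2}+B_{n,1}+B_{n,2}$, by a two-sided sandwich: for any fixed $\bu_0,\bv_0\succeq\bm{0}$,
\[
\min_{\bu\succeq\bm{0}}S_n(\bu,\bv_0)\ \le\ W_n^{\star}\ \le\ \max_{\bv\succeq\bm{0}}S_n(\bu_0,\bv),
\]
and I would take $\bu_0=\bv_0=\bm{1}$. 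It then suffices to show that the posterior law of $W_n^{\star}$ is uniformly tight in $\P_0$-probability, i.e.\ for every $\epsilon>0$ there is $M_{\epsilon}$ with $\limsup_n\P_0\big(\Pi(|W_n^{\star}|>M_{\epsilon}|\DD_n)>\epsilon\big)<\epsilon$: since $M_n\uparrow\infty$, eventually $M_n>M_{\epsilon}$ and hence $\Pi(|g^{\star}(\bx_0)-g_0(\bx_0)|\ge M_n\omega_n|\DD_n)=\Pi(|W_n^{\star}|>M_n|\DD_n)\to_{\P_0}0$.

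For the upper bound I would split the maximum over $\{\bv\preceq c\bm{1}\}$ and its complement, $c$ a large fixed constant. With $\bu_0=\bm{1}$ the block $[j(-\bm{1}):j(\bv)]$ always has physical volume of order $n^{-1}\omega_n^{-2}\prod_k(1+v_k)\ge n^{-1}\omega_n^{-2}$, so no degeneracy occurs. On $\{\bv\preceq c\bm{1}\}$: $\sup_{\bv\preceq c\bm{1}}|A_{n,1}(\bm{1},\bv)|=O_{\P}(1)$ follows from Lemma~\ref{lem:Yn} (after noting $\sum_{\bl}V_{\bl}=n(1+o_{\P}(1))$, since $\alpha_\cdot\le a_1\prod_kJ_k\ll n$); $\sup_{\bv\succeq\bm{1}}|B_{n,1}(\bm{1},\bv)|=O_{\P_0}(1)$ is Lemma~\ref{lemma:AsposP1} with $\tau=1$; and $\sup_{\bv\preceq c\bm{1}}|B_{n,2}(\bm{1},\bv)|=O(1)$ because the block support $R(\bm{1},\bv):=\bigcup\{I_{\bj}:\bj\in[j(-\bm{1}):j(\bv)]\}$ is contained in a fixed multiple of $\prod_k[x_{0,k}-r_{n,k},x_{0,k}+cr_{n,k}]$, on which the local polynomial approximation of Assumption~\ref{assumption:localsmoothness} (and the monotone features in Lemma~1 of \cite{han2020}) give $\big|\int_{R(\bm{1},\bv)}g_0/|R(\bm{1},\bv)|-g_0(\bx_0)\big|=O(\omega_n)$. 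For $A_{n,2}$ I would use $\E(V_{\bj}|\DD_n)/\sum_{\bl}V_{\bl}-N_{\bj}/n=\alpha_{\bj}/\sum_{\bl}V_{\bl}+N_{\bj}(n-\sum_{\bl}V_{\bl})/(n\sum_{\bl}V_{\bl})$; summing over the block and dividing by its volume, the first part is $\lesssim\prod_kJ_k/\sum_{\bl}V_{\bl}$, and since $\max_{\text{blocks}}(\sum_{\text{block}}N_{\bj})/(n\cdot\text{vol})=O_{\P_0}(1)$ (Bernstein plus a union bound over the at most $(\prod_kJ_k)^2$ blocks) and $|n-\sum_{\bl}V_{\bl}|=O_{\P}(n^{1/2})$, the second part is $\lesssim n^{-1/2}$; hence $\sup_{\bu,\bv}|A_{n,2}(\bu,\bv)|\lesssim\omega_n^{-1}(\prod_kJ_k/n+n^{-1/2})=o_{\P}(1)$, using $\prod_kJ_k\ll n\omega_n$ and $\omega_n^{-1}n^{-1/2}=n^{1/(2+\sum_k\eta_k^{-1})-1/2}\to0$; this estimate is uniform over all blocks.

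On the complement $\{\bv\not\preceq c\bm{1}\}$ the key point is the one-sided monotonicity of the deterministic drift: $\int_{R(\bm{1},\bv)}g_0/|R(\bm{1},\bv)|$ is an average of the coordinatewise nonincreasing $g_0$ over a rectangle whose lower corner stays fixed at $\bx_0-\bm{r}_n$, hence nonincreasing in each $v_k$, so $B_{n,2}(\bm{1},\bv)\le B_{n,2}(\bm{1},\bm{0})=O(1)$ there; together with the uniform bounds for $B_{n,1}$ (Lemma~\ref{lemma:AsposP1}), for $A_{n,2}$ (above), and for $A_{n,1}$ over $\{\bv\succeq\bm{1}\}$ (a conditional-on-$\DD_n$ analogue of Lemma~\ref{lemma:AsposP1} obtained by the same dyadic peeling, using $\mathrm{Var}(\sum_{\text{block}}(V_{\bj}-\E(V_{\bj}|\DD_n))|\DD_n)=\sum_{\text{block}}(\alpha_{\bj}+N_{\bj})$, which with $\P_0$-probability tending to one scales like $n$ times the physical block volume, as does $\sum_{\text{block}}N_{\bj}$), this gives $\max_{\bv\succeq\bm{0}}S_n(\bm{1},\bv)=O_{\P}(1)$. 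The lower bound is entirely symmetric after exchanging $\bu$ and $\bv$: fixing $\bv_0=\bm{1}$, on $\{\bu\preceq c\bm{1}\}$ the same four estimates give $\min_{\bu\preceq c\bm{1}}S_n(\bu,\bm{1})\ge-O_{\P}(1)$, while on $\{\bu\not\preceq c\bm{1}\}$ the average of the nonincreasing $g_0$ over $R(\bu,\bm{1})$ is nondecreasing in each $u_k$, so $B_{n,2}(\bu,\bm{1})\ge B_{n,2}(\bm{1},\bm{1})=O(1)$ and the noise is again $O_{\P}(1)$, yielding $\min_{\bu\succeq\bm{0}}S_n(\bu,\bm{1})\ge-O_{\P}(1)$. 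Combining the two bounds gives uniform tightness of the posterior law of $W_n^{\star}$, hence the lemma (the analogous statements for $g^{\dagger}$ and $\bar g$ follow by symmetry and by averaging).

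I expect the main obstacle to be the uniform control of the stochastic term $A_{n,1}$ over the \emph{unbounded} range of $\bv$ (for the upper bound) and of $\bu$ (for the lower bound): one needs a conditional, given $\DD_n$, version of the peeling/maximal-inequality argument of Lemma~\ref{lemma:AsposP1} for the Gamma weights, which in turn requires verifying that the conditional block variances $\sum_{\text{block}}(\alpha_{\bj}+N_{\bj})$ behave, with $\P_0$-probability tending to one, like their binomial analogues $\sum_{\text{block}}N_{\bj}$. A secondary subtlety is making the one-sided monotonicity of $B_{n,2}$ do the work on the far ranges without ever pushing the relevant one-sided bound the wrong way, and carrying the rate conditions $\prod_kJ_k\ll n\omega_n$ and $\omega_n\gg n^{-1/2}$ through the estimate of the renormalization remainder $A_{n,2}$.
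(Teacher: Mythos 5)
Your proposal is correct and follows essentially the same route as the paper: fix one coordinate of the min--max at $\bm{1}$ to get a one-sided bound, decompose into $A_{n,1},A_{n,2},B_{n,1},B_{n,2}$, control the two stochastic terms over the unbounded range by dyadic peeling with maximal inequalities (conditionally on $\DD_n$ for the Gamma part, using that $\sum_{\text{block}}(\alpha_{\bj}+N_{\bj})$ scales like $n$ times the block volume with $\P_0$-probability tending to one), handle $A_{n,2}$ via concentration of $\sum_{\bl}V_{\bl}$ together with $\prod_k J_k\ll n\omega_n$ and $\omega_n\gg n^{-1/2}$, and handle $B_{n,2}$ by the one-sided monotonicity of the block average plus the local expansion of Assumption~\ref{assumption:localsmoothness}. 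The only cosmetic difference is your extra split into $\{\bv\preceq c\bm{1}\}$ and its complement, which the paper's single peeling argument subsumes.
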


\begin{proof}
We shall prove one side of the claim that 
\begin{align}
    \P_0( \Pi(g^{\star}(\bx_0) - g_0(\bx_0)\geq M_n\omega_n|\DD_n) \geq  \eta) \to 0 
    \label{oneside}
\end{align} 
for any $\eta>0$. The other side follows by similar arguments.

By the min-max formula, we have that 
\begin{align*}
& \omega_n^{-1} (g^{\star}(\bx_0) - g_0(\bx_0) ) \\
   & \leq \omega_n^{-1} \left(\max_{\bv\succeq\bm{0}} \frac{\sum_{[j(-\bm{1}):j(\bv)]} \theta_{\bj}}{\prod_{k=1}^d (j(\bv)_{k}-j(-\bm{1})_{k}+1)J_k^{-1}} - {g_0(\bx_0)}\right)\\
& \leq \max_{\bv\succeq \bm{0}}| A_{n,1}(\bm{1},\bv;V)| + \max_{\bv\succeq \bm{0}}| A_{n, 2}(\bm{1},\bv;V)| + \max_{\bv\succeq \bm{0}}| B_{n,1}(\bm{1},\bv)| + \max_{\bv\succeq \bm{0}}|B_{n,2}(\bm{1},\bv)|.
\end{align*}
Thus it follows that 
\begin{align*}
    &\Pi(g^{\star}(\bx_0) - g_0(\bx_0)\geq M_n\omega_n|\DD_n)\\
    \leq &  \Pi\big(\max_{\bv\succeq \bm{0}}|A_{n,1}(\bm{1},\bv;V)|>M_n/4|\DD_n\big) + \Pi\big(\max_{\bv\succeq \bm{0}}|A_{n, 2}(\bm{1},\bv;V)|>M_n/4|\DD_n\big) \\
    & \quad +\Ind\big\{\max_{\bv\succeq \bm{0}}|B_{n,1}(\bm{1},\bv)|>M_n/4\big\} + \Ind\big\{\max_{\bv\succeq \bm{0}}|B_{n,2}(\bm{1},\bv)|> M_n/4\big\}.
\end{align*}
Hence, we have
\begin{eqnarray}
    &\lefteqn{ \P_0( \Pi(g^{\star}(\bx_0) - g_0(\bx_0)\geq M_n\omega_n|\DD_n) \leq \eta) }\nonumber\\
    &&\leq  \P_0\big( \Pi\big(\max_{\bv\succeq \bm{0}}|A_{n,1}(\bm{1},\bv;V)|>M_n/4\big)> \eta/2|\DD_n\big) \label{probAn1}\\
    &&  \quad + \P_0\big( \Pi\big(\max_{\bv\succeq \bm{0}}|A_{n, 2}(\bm{1},\bv;V)|>M_n/4|\DD_n\big)> \eta/2 \big) 
    \label{probAn2}\\
    && \quad + \P_0\big(\max_{\bv\succeq \bm{0}}|B_{n,1}(\bm{1},\bv)|>M_n/4 \big)\label{probBn1}\\
    && \quad + \P_0\big(\max_{\bv\succeq \bm{0}}|B_{n,2}(\bm{1},\bv)|> M_n/4  \big)
    \label{probBn2}.
\end{eqnarray}
It suffices to show that each term \eqref{probAn1}--\eqref{probBn2} converges to zero. 

To show that \eqref{probAn1} converges to zero, it is enough to show that 
\begin{align}
\label{expratebound}
    \E\Big(\max_{\bv\succeq \bm{0}}
    \big|\frac{\sum_{[j(-\bm{1}):j(\bv)]} (V_{\bj}- \E(V_{\bj}))/n}{\prod_{k=1}^d (j(\bv)_{k}-j(-\bm{1})_{k}+1)J_k^{-1}}\big|\Big| \DD_n\Big) = O_{\P_0}(\omega_n).
\end{align}
We use arguments similar to those in the proof of Lemma \ref{lemma:AsposP1}. By splitting the domain into smaller rectangles, we can bound 
\begin{align}
    \lefteqn{ \E\Big(\max_{\bv\succeq \bm{0}}
    \big|\frac{\sum_{[j(-\bm{1}):j(\bv)]} (V_{\bj}- \E(V_{\bj}))/n}{\prod_{k=1}^d (j(\bv)_{k}-j(-\bm{1})_{k}+1)J_k^{-1}}\big|\Big| \DD_n\Big)}\nonumber\\
    &\leq  \sum_{\substack{h_k\geq 0\\1\leq k\leq d}}\E\Big(\max_{\substack{2^{h_k}\leq v_k\leq2^{h_k+1}\\1\leq k\leq d}}\big|\frac{\sum_{[j(-\bm{1}):j(\bv-\bm{1})]} (V_{\bj}- \E(V_{\bj}))/n}{\prod_{k=1}^d (j(\bv-\bm{1})_{k}-j(-\bm{1})_{k}+1)J_k^{-1}}\big|\Big|\DD_n\Big)\nonumber\\
    &\leq  \sum_{\substack{h_k\geq 0\\1\leq k\leq d}}\frac{1}{n\prod_{k=1}^d (r_{n,k}2^{h_k})}\E\Big(\max_{\substack{2^{h_k}\leq v_k\leq2^{h_k+1}\\1\leq k\leq d}}\big|\sum_{[j(-\bm{1}):j(\bv-\bm{1})]} (V_{\bj}- \E(V_{\bj}))\big|\Big|\DD_n\Big)\nonumber\\
    & \leq \sum_{\substack{h_k\geq 0\\1\leq k\leq d}}\frac{1}{n\prod_{k=1}^d (r_{n,k}2^{h_k})}\Big[\E\big(\max_{\substack{2^{h_k}\leq v_k\leq2^{h_k+1}\\1\leq k\leq d}}\big|\sum_{[j(-\bm{1}):j(\bv-\bm{1})]} (V_{\bj}- \E(V_{\bj}))\big|^2\big|\DD_n\big)\Big]^{1/2}\nonumber\\
    & \leq  C_d \sum_{\substack{h_k\geq 0\\1\leq k\leq d}} \frac{1}{n\prod_{k=1}^d (r_{n,k}2^{h_k})}
    \Big[\E\big( \big|\sum_{[j(-\bm{1}):j(2^{\bm{h}+\bm{1}}-\bm{1})]} (V_{\bj} - \E(V_{\bj}))\big|^2\big|\DD_n\big)\Big]^{1/2}\nonumber\\
    &=  C_d \sum_{\substack{h_k\geq 0\\1\leq k\leq d}} \frac{1}{n\prod_{k=1}^d (r_{n,k}2^{h_k})}
    \Big[\sum_{[j(-\bm{1}):j(2^{\bm{h}+\bm{1}}-\bm{1})]}(\alpha_{\bj}+N_{\bj})\Big]^{1/2}
    \label{boundvar}
\end{align}
for some positive constant $C_d$ depending only on $d$. 
As,  by Lemma \ref{lemma:Nj}, $\{\alpha_{\bj}\}$ is bounded uniformly by a constant $a_1>0$ and $N_{\bj} \asymp n/\prod_{k=1}^d J_k$ uniformly for all $\bj$ with $\P_0$-probability tending to one, \eqref{boundvar} is bounded, up to some constant multiple, by
\begin{align*}
     \frac{(n\prod_{k=1}r_{n,k})^{1/2}}{n\prod_{k=1} r_{n,k}}  \sum_{\substack{h_k\geq 0\\1\leq k\leq d}} 2^{-\sum_{k=1}^d h_k/2} =  C \omega_n,
\end{align*}
for some constant $C>0$ with $\P_0$-probability tending to one,
since the sum of the series in the last display converges. Thus \eqref{expratebound} holds.

To bound \eqref{probAn2}, we follow the arguments used in bounding (6.7) by replacing $\bu$ by $\bm{1}$ and lower-bounding  $\bv$ to $\bm{0}$. 

To obtain a bound for \eqref{probBn1}, we proceed as in the proof of Lemma \ref{lemma:AsposP1}, and observe that 
\begin{align*}
    \lefteqn{ \E_0 \max_{\bv\succeq \bm{0}}\Big|\frac{\sum_{[j(-\bm{1}):j(\bv)]} (N_{\bj} - \E_0(N_{\bj}))/n}{\prod_{k=1}^d (j(\bv)_{k}-j(-\bm{1})_{k}+1)J_k^{-1}}\Big| }\\
    &\leq  \sum_{\substack{h_k\geq 0\\1\leq k\leq d}}\E_0 \max_{\substack{2^{h_k}\leq v_k\leq2^{h_k+1}\\1\leq k\leq d}}\Big|\frac{\sum_{[j(-\bm{1}):j(\bv-\bm{1})]} (N_{\bj} - \E_0(N_{\bj}))/n}{\prod_{k=1}^d (j(\bv-\bm{1})_{k}-j(-\bm{1})_{k}+1)J_k^{-1}}\Big| \\
    &\leq  \sum_{\substack{h_k\geq 0\\1\leq k\leq d}} \frac{1}{n\prod_{k=1}^d (r_{n,k}2^{h_k})}
    \E_0  \max_{\substack{2^{h_k}\leq v_k\leq2^{h_k+1}\\1\leq k\leq d}}\Big|\sum_{[j(-\bm{1}):j(\bv-\bm{1})]} (N_{\bj} - \E_0(N_{\bj}))\Big| \\
    &\leq  \sum_{\substack{h_k\geq 0\\1\leq k\leq d}} \frac{1}{n\prod_{k=1}^d (r_{n,k}2^{h_k})}
    \Big[\E_0  \max_{\substack{2^{h_k}\leq v_k\leq2^{h_k+1}\\1\leq k\leq d}}\Big|\sum_{[j(-\bm{1}):j(\bv-\bm{1})]} (N_{\bj} - \E_0(N_{\bj}))\Big|^2 \Big]^{1/2}\\
    &\leq  C_d \sum_{\substack{h_k\geq 0\\1\leq k\leq d}} \frac{1}{n\prod_{k=1}^d (r_{n,k}2^{h_k})}
    \Big[\E_0  \Big|\sum_{[j(-\bm{1}):j(2^{\bm{h}+\bm{1}}-\bm{1})]} (N_{\bj} - \E_0(N_{\bj}))\Big|^2 \Big]^{1/2}\\
    &\leq  C_{d,g_0} \sum_{\substack{h_k\geq 0\\1\leq k\leq d}} \frac{(n\prod_k^d(r_{n,k}2^{h_k+1}))^{1/2}}{n\prod_{k=1}^d (r_{n,k}2^{h_k})},
\end{align*}
which is bounded by a constant multiple of $\omega_n$. 
Thus $\max\{|B_{n,1}(\bm{1},\bv)|:\bv\succeq \bm{0}\} =O_{\P_0}(1)$.

We observe that $B_{n,2}$ is deterministic and 
\begin{align*}
& | B_{n,2}(\bm{1}, \bv)|
= \omega_n^{-1}\Big|\frac{\int_{\bigcup\{I_{\bj}:\bj\in [j(-\bm{1}):j(\bm{0})]\setminus j(\bm{0})\}} g_0(\bx)\d\bx + \int_{I_{j(\bm{0})}} g_0(\bx)\d \bx}{\int_{\bigcup\{I_{\bj}:\bj\in [j(-\bm{1}):j(\bm{0})]\}} \d\bx} - g_0(\bx_0)\Big|,
\end{align*}
As $g_0$ is nonincreasing, it is clear that $g_0(\bx) > g_0(\bx_0)$ for all $\bx \in \bigcup\{I_{\bj}: \bj\in [j(-\bm{1}):j(\bm{0})]\setminus j(\bm{0})\}$. Thus, $| B_{n,2}(\bm{1}, \bv)|$ is bounded by
\begin{align*}
      \omega_n^{-1}\frac{\int_{\bigcup\{I_{\bj}:\bj\in [j(-\bm{1}):j(\bm{0})]\setminus j(\bm{0})\}} g_0(\bx) - g_0(\bx)\d\bx }{\int_{\bigcup\{I_{\bj}:\bj\in [j(-\bm{1}):j(\bm{0})]\}} \d\bx} + \omega_n^{-1} \Big|\frac{\int_{I_{j(\bm{0})}} g_0(\bx) - g_0(\bx_0)\d \bx}{\int_{\bigcup\{I_{\bj}: \bj\in [j(-\bm{1}):j(\bm{0})]\}} \d\bx}\Big|, 
\end{align*}      
which is further bounded by      
\begin{align*}
     \omega_n^{-1}[g_0((\ceil{\bx_0 - \bm{r}_n} - \bm{1}) / \bJ)-g_0(\bx_0)] + o(1)
     \lesssim \big| \sum_{\bmm \in M} \frac{\partial^{\bmm}g_0(\bx_0)}{\bmm!} \big|,
\end{align*}
as $\prod_{k=1}^d r_{n,k}^{m_k} = \omega_n^{\sum_{k=1}^d m_k/\eta_k} = \omega_n$ for $\bmm\in M$. Thus, $\max_{\bv\geq \bm{0}}|B_n(\bm{1}, \bv)| \le M_n/4$ when $n$ large enough.

Piecing these together, \eqref{oneside} follows. 
\end{proof}

\begin{lemma}\label{lemma:dev}
 Let $\bu^{\star}$ and $\bv^{\star}$ be such that
\begin{align*}
    \frac{\sum_{[j(-\bu^{\star}):j(\bv^{\star})]} \theta_{\bj}}{\prod_{k=1}^d (j(\bv^{\star})_{k}-j(-\bu^{\star})_{k}+1)J_k^{-1}}=\min_{\bu\succeq \bm{0}} \max_{\bv\succeq \bm{0}}\frac{\sum_{[j(-\bu):j(\bv)]} \theta_{\bj}}{\prod_{k=1}^d (j(\bv)_{k}-j(-\bu)_{k}+1)J_k^{-1}}.
\end{align*}
Then  there exist $c>1$ and $\gamma>0$ such that
\begin{align*}
    \lim_{c\to \infty}\limsup_{n\to \infty}\Pi(c^{-\gamma} \le \min_{1\leq k \leq d}u^{\star}_k \leq \max_{1\leq k \leq d}u^{\star}_k \le c|\DD_n) = 1,
\end{align*}
in $\P_0$-probability.
\end{lemma}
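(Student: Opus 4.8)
The plan is to show that the optimal localization point $(\bu^\star,\bv^\star)$ arising in the min-max representation of $g^\star(\bx_0)$ cannot stray too far, nor collapse too close to the origin, when the posterior is concentrated near $g_0$ on the relevant scale. I would argue by contradiction using the contraction rate from Lemma~\ref{lemma:contractionrate}: on the event $\{|g^\star(\bx_0)-g_0(\bx_0)|\le M_n\omega_n\}$, which has posterior probability tending to one in $\P_0$-probability for any $M_n\uparrow\infty$, any candidate pair $(\bu,\bv)$ that is extremely large or extremely small would force the corresponding block average $\sum_{[j(-\bu):j(\bv)]}\theta_{\bj}/\prod_k(j(\bv)_k-j(-\bu)_k+1)J_k^{-1}$ to differ from $g_0(\bx_0)$ by more than $M_n\omega_n$, so such a pair cannot be the optimizer.

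First I would establish a deterministic ``sandwiching'' identity: by the min-max structure, $g^\star(\bx_0)\le \max_{\bv\succeq\bm 0}(\text{average over }[j(-\bu^\star):j(\bv)])$ and $g^\star(\bx_0)\ge \min_{\bu\succeq\bm 0}(\text{average over }[j(-\bu):j(\bv^\star)])$, so the optimizing block average is pinched between these, and on the good event it is within $O(M_n\omega_n)$ of $g_0(\bx_0)$. Next I would lower-bound the deviation of a block average from $g_0(\bx_0)$ when $\max_k u^\star_k$ (or $\max_k v^\star_k$) is of order $c$: decomposing via the $A_{n,1},A_{n,2},B_{n,1},B_{n,2}$ pieces, the stochastic terms $A_{n,1},A_{n,2},B_{n,1}$ are $O_{\P_0}(1)$ uniformly (Lemmas~\ref{lem:Yn}, \ref{lemma:P_nweaklimit}, \ref{lemma:AsposP1} and the rate bounds inside the proof of Lemma~\ref{lemma:contractionrate}), while the bias term $B_{n,2}$ over a block of half-width $\asymp c\bm r_n$ behaves like a nonzero polynomial of degree $\bm m\in M$ in $c$, hence grows like a positive power of $c$ by Assumption~\ref{assumption:localsmoothness} and the sign-definiteness of $\partial_k^{\eta_k}g_0(\bx_0)$; choosing $c$ large makes this exceed $M_n\omega_n$, a contradiction. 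Symmetrically, if $\min_k u^\star_k\le c^{-\gamma}$ the block shrinks so fast in that coordinate that the stochastic fluctuation (scaled by $\omega_n^{-1}$ and divided by a block volume $\asymp c^{-\gamma}$) blows up like $c^{\gamma/2}$ or worse, again forcing a deviation larger than $M_n\omega_n$; picking $\gamma$ appropriately relative to the Gaussian scaling $\prod_k(u_k+v_k)$ in $U$ gives the lower bound $c^{-\gamma}\le\min_k u^\star_k$.

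Concretely, the steps in order: (1) recall $\Pi(|g^\star(\bx_0)-g_0(\bx_0)|\ge M_n\omega_n|\DD_n)\to_{\P_0}0$ from Lemma~\ref{lemma:contractionrate}; (2) use the min-max pinching to transfer this to a bound on the optimal block average, hence on $B_{n,2}(\bu^\star,\bv^\star)$ up to the uniformly-$O_{\P_0}(1)$ stochastic terms; (3) show $|B_{n,2}|\gtrsim c^{\delta}$ for some $\delta>0$ whenever $\max_k u^\star_k\vee\max_k v^\star_k\ge c$, using the exact-degree Taylor expansion in Assumption~\ref{assumption:localsmoothness} together with the fact (Lemma~1 of \cite{han2020}) that the relevant polynomial is monotone and nonvanishing, to rule out the upper tail; (4) show the stochastic term $A_{n,1}+B_{n,1}$ restricted to a block with $\min_k u^\star_k\le c^{-\gamma}$ has magnitude $\gtrsim c^{\gamma\delta'}$ with posterior/frequentist probability bounded below, ruling out the lower tail; (5) combine and take $\limsup_n$ then $c\to\infty$. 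The main obstacle I anticipate is step (4): controlling the \emph{lower} tail, i.e.\ ruling out a degenerate optimizer where a single coordinate of $\bu^\star$ or $\bv^\star$ is tiny, because there one must produce a \emph{lower} bound on a supremum of a fluctuating process over thin blocks rather than an upper bound, and the min-max interplay means one has to be careful that the inner maximization over $\bv$ (respectively inner minimization over $\bu$) cannot rescue a bad outer choice; I would handle this by invoking the anti-concentration / non-degeneracy of the limiting Gaussian process $U$ guaranteed by Lemma~\ref{prop:weakconvergence}(iii) and the localization Lemmas~\ref{lem:Yn}--\ref{lemma:P_nweaklimit}, possibly after first proving the analogous statement for the limit process $W_c$ and then transferring it back via the weak-convergence in Lemma~\ref{prop:weakconvergence}(i).
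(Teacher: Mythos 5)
Your proposal follows essentially the same route as the paper's proof: both halves rest on the contraction rate of Lemma~\ref{lemma:contractionrate}, the upper tail on $\bu^\star$ is excluded because the bias term $B_{n,2}$ at the optimizer grows like a positive power of $c$ via the exact-order Taylor expansion of Assumption~\ref{assumption:localsmoothness} and the sign of $\partial_k^{\eta_k}g_0(\bx_0)$, and the lower tail is excluded because the centered fluctuation of a block average over a region that is thin in one coordinate blows up after division by the block volume. One point to tighten in your step (4): the favorable stochastic event must carry (posterior) probability tending to one, not merely bounded below, which the paper secures by restricting the inner maximization over $\bv$ to a slab with $v_d\le c^{-b}$, invoking the almost-sure positivity of the supremum of the limiting Gaussian process (Lemma~\ref{lemma:posimax}) to get probability at least $1-\eta\epsilon$, and controlling the increments $Y_n(\bu,\bv)-Y_n(\bm{0},\bv)$ and $P_n(\bu,\bv)-P_n(\bm{0},\bv)$ over that slab via Lemma~\ref{lemma:max_diff} with a careful choice of the exponents $a$, $b$, $\gamma$ --- exactly the transfer-to-the-limit device you anticipate, made quantitative.
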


\begin{proof}
First, we show that 
\begin{equation}
\label{newequation1}
\lim_{c\to\infty}\limsup_{n\to\infty}\Pi(\max_{1\leq k \leq d}u^{\star}_k \le c|\DD_n) = 1 \mbox{ in }\P_0\mbox{-probability}.
\end{equation}
We note that, by the min-max formula,
\begin{multline}
    \omega_n B_{n,2}(\bu^{\star},\bv^{\star})  
    \\
    = \frac{\sum_{[j(-\bu^{\star}):j(\bv^{\star})]} \E(N_{\bj})/n}{\prod_{k=1}^d (j(\bv^{\star})_{k}-j(-\bu^{\star})_{k}+1)J_k^{-1}}- g_0(\bx_0)     \ge \frac{\int_{\bigcup\{I_{\bj}:\bj\in [j(-\bu^{\star}):j(\bm{1})]\}} (g_0(\bx)- g_0(\bx_0))\d\bx}{\int_{\bigcup\{I_{\bj}: \bj\in [j(-\bu^{\star}):j(\bm{1})]\}} \d\bx}.
    \label{formula:llowerbound}
\end{multline}
By Assumption 2, for $\bx$ in a small neighborhood of $\bx_0$, we have
\begin{align*}
    g_0(\bx)- g_0(\bx_0) = \sum_{\bmm\in M}\frac{\partial^{\bmm}g_0(\bx_0)}{\bmm !}(\bx -\bx_0)^{\bmm} + o(\max_k |x_k - x_{0,k}|^{\eta_k}).
\end{align*}
Thus, by noting that $g_0(\bx)$ is monotone nonincreasing and then $\partial^{\eta_k}_k g_0(\bx_0)<0$,
\eqref{formula:llowerbound} can be lower bounded by
\begin{eqnarray*}
  \lefteqn{   \sum_{\bmm\in M}\frac{\partial^{\bmm}g_0(\bx_0)}{\bmm !}
    \frac{\int_{\cup_{[j(-\bu^{\star}):j(\bm{1})]}I_{\bj}} (\bx-\bx_0)^{\bmm} \d \bx }{\int_{\cup_{[j(-\bu^{\star}):j(\bm{1})]}I_{\bj}} \d \bx} + o(\omega_n \max_{k} u^{\star\eta_k}_k)} \\
    &&\asymp  \omega_n \sum_{\bmm\in M}\frac{\partial^{\bmm}g_0(\bx_0)}{(\bmm+\bm{1}) !}
    \frac{1-\prod_{k=1}^d(-u_k^{\star})^{m_k+1}}{\prod_{k=1}^d(1 + u_k^{\star})} + o(\omega_n \max_{k}u^{\star\eta_k}_k)\\
    &&    \gtrsim  \omega_n \max_{k} u_k^{\star\eta_k},
\end{eqnarray*}
which holds when $n$ and $\max_{1\le k \le d} u^{\star}_k$ are sufficiently large.
If $\max_k u^{\star}_k \ge c$ for a sufficiently large $c>0$, then  $|g^{\star}(\bx_0) - g_0(\bx_0)| \gtrsim c\omega_n$. In view of Lemma \ref{lemma:contractionrate}, the assertion in \eqref{newequation1} follows. 

Without loss of generality, it is sufficient to show that $\Pi(u^{\star}_d \leq c^{-\gamma} |\DD_n)\to_{\P_0}0$, as $c,n\to\infty$.
We shall prove the claim by showing that $\{g^{\star}(\bx_0) - g_0(\bx_0) \geq c^{\delta}\omega_n\} $ for some $\delta>0$, to be determined later, if $\{ \min_{1\leq k \leq d} u^{\star}_k \leq c^{-\gamma} \}$ happens when $n$ and $c$ large enough. Then the claim is concluded with the help of Lemma \ref{lemma:contractionrate}.

Recall the notations, $A_{n,1}$, $A_{n,2}$, $B_{n,1}$, $B_{n,2}$, $s_n$, $Y_n$, and $P_n$, defined in the proofs of Theorem 4.1, Lemma \ref{lem:Yn}, and Lemma \ref{lemma:P_nweaklimit}. We shall use the same decomposition as in this proof of Theorem 4.1.
For some constants $a>0$ and $b>0$, to be determined later, we can write 
$\omega_n^{-1}(g^{\star}(\bx_0) - g_0(\bx_0) )$ as 
\begin{align*}
    \lefteqn{ \max_{\bv\succeq \bm{0}}\left(\frac{n}{ \sum_{\bl}V_{\bl} }\cdot\frac{Y_n(\bu^{\star},\bv;V)}{s_n(\bu^{\star},\bv)} + A_{n, 2}(\bu^{\star},\bv;V) + \frac{P_n(\bu^{\star}, \bv)}{s_n(\bu^{\star}, \bv)} + B_{n,2}(\bu^{\star}, \bv)\right)}\\
    &\geq  \max_{ 0 \leq v_k \leq c^a\Ind\{1\leq k \leq d-1\} + c^{-b}\Ind\{k=d\}}
    \Bigg(\frac{n}{ \sum_{\bl}V_{\bl} }\cdot\frac{Y_n(\bu^{\star},\bv;V)}{s_n(\bu^{\star},\bv)} 
    + A_{n,2}(\bu^{\star},\bv;V) + \frac{P_n(\bu^{\star}, \bv)}{s_n(\bu^{\star}, \bv)} + B_{n,2}(\bu^{\star}, \bv)\Bigg)\\
    &\ge  \max_{ 0 \leq v_k \leq c^a\Ind\{1\leq k \leq d-1\} + c^{-b}\Ind\{k=d\}}\left(\frac{n}{ \sum_{\bl}V_{\bl} }\cdot\frac{Y_n(\bu^{\star},\bv;V)}{s_n(\bu^{\star},\bv)} + \frac{P_n(\bu^{\star}, \bv)}{s_n(\bu^{\star}, \bv)} \right) 
    \\
    & \quad +\min_{ 0 \leq v_k \leq c^a\Ind\{1\leq k \leq d-1\} + c^{-b}\Ind\{k=d\}} A_{n,2}(\bu^{\star},\bv;V) \\
    & \quad + \min_{ 0 \leq v_k \leq c^a\Ind\{1\leq k \leq d-1\} + c^{-b}\Ind\{k=d\}} B_{n, 2}(\bu^{\star}, \bv).
\end{align*}

Define $\mathcal{E}_{0}(c)=\{u^{\star}_d<c^{-\gamma}\}$ and $\mathcal{E}_{1}(c)=\{\max_{1\leq k \leq d} u^{\star}_k \leq c\}$. By the first part of the proof, for every $\eta$ and $\epsilon>0$, we have $\P_0(\Pi(\mathcal{E}_1(c)|\DD_n)\geq 1 - \eta)\geq 1-\epsilon$ when $c$ and $n$ are large enough.
Define $R_{a,b,\gamma}(c)=\{(\bu,\bv)\in\RR^d_{\geq 0}\times\RR^d_{\geq 0}: 0\leq u_k \leq c\Ind\{1\leq k \leq d-1\}+c^{-\gamma}\Ind\{k=d\}, 0\leq v_k \leq c^a\Ind\{1\leq k \leq d-1\}+c^{-b}\Ind\{k=d\}\}$, where $a,b,\gamma$ will be determined later.

By Lemma \ref{lem:Yn} and Lemma \ref{lemma:P_nweaklimit}, we know
\begin{align*}
    & Y_n(\bu,\bv; V) \rightsquigarrow \sqrt{g_0(\bx_0)} H_2(\bu, \bv) \text{ in $\P_0$-probability in $\LL_{\infty}([\bm{0},c\bm{1}]\times[\bm{0},c\bm{1}])$},\\
    & P_n(\bu,\bv) \rightsquigarrow \sqrt{g_0(\bx_0)} H_1(\bu, \bv) \text{ in $\LL_{\infty}([\bm{0},c\bm{1}]\times[\bm{0},c\bm{1}])$}.
\end{align*}
Then by Lemma \ref{lemma:max_diff}, when $c,n$ are large enough, there exists a constant $C_1$ depending on $g_0(\bx_0),d,a$ such that $\P_0(\Pi(\mathcal{E}_2(c)|\DD_n)\geq 1-\eta)\geq 1-\epsilon$ where
$\mathcal{E}_2(c)$ is defined as
\begin{align*}
    \mathcal{E}_2(c)=\big\{\sup_{(\bm{u},\bm{v})\in R_{a,b,\gamma}(c)}\abs{Y_{n}(\bm{u},\bm{v})-Y_{n}(\bm{0},\bv)}\leq (C_1/\eta)\sqrt{c^{a(d-1)-\gamma}\log c} \big\}.
\end{align*}
Similarly, there exists $C_2>0$ such that, for 
\begin{align*}
    E_2(c)=\big\{\sup_{(\bm{u},\bm{v})\in R_{a,b,\gamma}(c)}\abs{P_{n}(\bm{u},\bm{v})-P_{n}(\bm{0},\bv)}\leq (C_2/\epsilon)\sqrt{c^{a(d-1)-\gamma}\log c} \big\},
\end{align*}
we have $\P_0(E_2(c))\geq 1 - \epsilon$ when $n$ and $c$ large enough.

As $H_1$ and $H_2$ are two independent Gaussian processes, 
by Lemma \ref{lemma:posimax}, there exists some $\rho_{\eta,\epsilon}>0$ such that, when $a>1$, $c>1$ and $n$ large enough, we have
\begin{align*}
    \lefteqn{\P_0 \times \Pi\Big(\max_{0\leq v_k \leq c^a\Ind\{1\leq k\leq d-1\}+ c^{-b}\Ind\{k=d\}} Y_n(\bm{0},\bv) + P_n(\bm{0},\bv)\leq \sqrt{c^{a(d-1)-b}}\rho_{\eta,\epsilon} \Big)}\\
    &\to  
    \P\Big(\max_{\substack{0\leq v_k \leq c^a\Ind\{1\leq k\leq d-1\}\\ 0\leq v_d \leq c^{-b}}} H_1(\bm{0},\bv) + H_2(\bm{0},\bv) \leq \sqrt{c^{a(d-1)-b}}\rho_{\eta,\epsilon}/\sqrt{g_0(\bx_0)} \Big)\\
    &\leq \P \Big(\max_{\substack{0\leq v_k \leq  1\\1\leq k \leq d}} H_1(\bm{0},\bv) + H_{2}(\bm{0},\bv) \leq \rho_{\eta\epsilon}/\sqrt{g_0(\bx_0)} \Big)\leq \eta\epsilon.
\end{align*}
Hence, there exists a constant $C_3>0$ such that, for the event
\begin{align*}
    \mathcal{E}_3(c)  = \{\sup\{Y_n(\bm{0},\bv)+P_n(\bm{0},\bv): 0\leq v_k \leq c^a, 0\leq v_d\leq c^{-b}\}> C_3\sqrt{c^{a(d-1)-b}} \},
\end{align*} 
we have
$(\P_0\times \Pi)(\mathcal{E}^3_{c}) \geq 1-\eta\epsilon$ for $n$ large enough.
This implies the assertion that 
$\P_0(\Pi(\mathcal{E}_3(c)|\DD_n)\geq1-\eta)\geq1-\epsilon$ when $n$ large enough.


For $s_n$, we have on $R_{a,b,\gamma}(c)$, 
\begin{multline*}
    s_n(\bu,\bv)  = n\omega_n^2\prod_{k=1}^d(\ceil{(x_{0,k} + r_{n,k}v_k)J_k} - \ceil{(x_{0,k} - r_{n,k}u_k)J_k} + 1)J_k^{-1} \\
    \leq \prod_{k=1}^d(u_k + v_k + 2J_k^{-1} r_{n,k}^{-1}) \lesssim (c+c^a)^{d-1}(c^{-b} + c^{-\gamma}),
\end{multline*}
Hence, on the intersection of events $\mathcal{E}_{0}(c),\mathcal{E}_{1}(c),\mathcal{E}_2(c)$, $E_2(c)$ and $\mathcal{E}_{3}(c)$, it holds
that 
\begin{multline*}
    \max_{\substack{0\leq v_k \leq c^a \Ind\{1\leq k\leq d-1\}\\0\leq u_d\leq c^{-b}}} \frac{Y_n(\bm{u}^{\star},\bv;V)+P_n(\bm{u}^{\star},\bv)}{s_n(\bu^{\star},\bv)} \\
    \gtrsim \frac{C_3\sqrt{c^{a(d-1)-b}}-C_1\sqrt{c^{a(d-1)-\gamma}\log c} /\eta- C_2\sqrt{c^{a(d-1)-\gamma}\log c} /\epsilon}{(c^a+c)^{d-1}(c^{-b}+c^{-\gamma})}  \geq C_4 \sqrt{c^{b-a(d-1)}},
\end{multline*}
for some $C_4>0$ when $n$ and $c$ large enough and $\gamma > b$.

To bound $A_{n, 2}(\bu,\bv)$ uniformly in $(\bu,\bv)$, we follow the arguments used in bounding $A_{n, 2}$ in the proof of Theorem 4.1 and Lemma \ref{lemma:Nj}, to observe that 
\begin{align*}
    \frac{\sum_{\bj\in [j(-\bu):j(\bv)]}N_{\bj}/n}{\prod_{k=1}^d (j(\bv)_{k}-j(-\bu)_{k}+1)J_k^{-1}} \asymp \frac{\sum_{\bj\in [j(-\bu):j(\bv)]} \prod_{k=1}^d  J_k^{-1}}{\prod_{k=1}^d (j(\bv)_{k}-j(-\bu)_{k}+1)J_k^{-1}} = 1,
\end{align*}
uniformly for all $(\bu,\bv)$, with $P_0$-probability tending to one.

We also observe that for $n$ sufficiently large, by monotonicity and Assumption 2, there exists $C_5>0$ such that
\begin{align*}
     & \min_{\substack{0\leq v_k \leq c^a \Ind_{1\leq k\leq d-1}\\0\leq v_d\leq c^{-b}}}
    B_{n, 2}(\bu^{\star},\bv) \\
    &=  \omega_n^{-1} \Big(\min_{\substack{0\leq v_k \leq c^a \Ind\{1\leq k\leq d-1\}\\0\leq v_d\leq c^{-b}}} \frac{\int_{\bigcup\{I_{\bj}: \bj\in [j(-\bu^{\star}):j(\bv)]\}} g_0(\bx)\d\bx}{\int_{\bigcup\{I_{\bj}:\bj\in [j(-\bu^{\star}):j(\bv)]\}} \d\bx} - g_0(\bx_0)\Big)\\
    &\geq  \omega_n^{-1} \Big(\min_{\substack{0\leq v_k \leq c^a \Ind\{1\leq k\leq d-1\}\\0\leq v_d\leq c^{-b}}} g_0(\bx_0+\bm{r}_n\bv + \bJ^{-1}) -g_0(\bx_0) \Big)\\
    &=  - \omega_n^{-1} \cdot \max_{\substack{0\leq v_k \leq c^a \Ind\{1\leq k\leq d-1\}\\0\leq v_d\leq c^{-b}}}\sum_{\bmm\in M}\frac{\abs{\partial^{\bmm}g_0(\bx_0)}}{\bmm !}\prod_{k=1}^d(v_kr_{n,k} + J_k^{-1})^{m_k} + o(c^{a\max_{k}\eta_k})\\
     &\geq   -C_5 c^{a\max_k \eta_k}.
\end{align*}

As here we assume that $1\leq \alpha_k < \infty$ for every $k$, we take $a=3$, $b\geq 2 a\max_{k}\alpha_k + a(d-1)$ and $\gamma=b+1$.
 To fulfill the conditions on $a,b$, and $\gamma$ in Lemma \ref{lemma:max_diff}. Let $\delta = (b-a(d-1))/2 \geq a\max_k\eta_k$. 
On the intersection of events $\mathcal{E}_c^{0},\mathcal{E}_c^{1},\mathcal{E}_c^{2}$, $E_c^2$ and $\mathcal{E}^{3}_c$, when $n$ and $c$ large enough, we have
\begin{align*}
    & \omega_n^{-1}(g^{\star}(\bx_0) - g_0(\bx_0) ) \\
    &\geq  \max_{ 0 \leq v_k \leq c^a\Ind\{1\leq k \leq d-1\} + c^{-b}\Ind\{k=d\}}\Big(\frac{n}{ \sum_{\bl}V_{\bl} }\cdot\frac{Y_n(\bu^{\star},\bv;V)}{s_n(\bu^{\star},\bv)} + \frac{P_n(\bu^{\star}, \bv)}{s_n(\bu^{\star}, \bv)} \Big)\\
    & \quad +\min_{ 0 \leq v_k \leq c^a\Ind\{1\leq k \leq d-1\} + c^{-b}\Ind\{k=d\}} A_{n, 2}(\bu^{\star},\bv;V) + \min_{ 0 \leq v_k \leq c^a\Ind\{1\leq k \leq d-1\} + c^{-b}\Ind\{k=d\}} B_{n,2}(\bu^{\star}, \bv)
    \\
    & \gtrsim C_4 \sqrt{c^{b-a(d-1))}}- C_5 c^{a\max_k \eta_k}\gtrsim c^{\delta}.
\end{align*}

Hence for some $C_6>0$, on an event with $\P_0$-probability tending to $1$,
\begin{align}
\label{formula:subound}
    \Pi(\bigcap_{p=0}^3\mathcal{E}_p (c)\cap E_2(c)|\DD_n) \leq  \Pi(\omega_n^{-1}(g^{\star}(\bx_0)-g_0(\bx_0))\geq C_6 c^{\delta}|\DD_n).
\end{align}
By Lemma \ref{lemma:contractionrate}, the right-hand side of \eqref{formula:subound} can be arbitrarily small in $\P_0$-probability we choose $c$ sufficiently large. On the other hand, we know that $\P_0(\Pi(\mathcal{E}_1(c)|\DD_n)\geq 1 - \eta)\geq 1-\epsilon$, $\P_0(\Pi(\mathcal{E}_2(c)|\DD_n)\geq 1 - \eta)\geq 1-\epsilon$, $\P_0(E_2(c))\geq 1-\epsilon$, and $\P_0(\Pi(\mathcal{E}_3(c)|\DD_n)\geq 1 - \eta)\geq 1-\epsilon$. Thus we can conclude that $\P_0(\Pi_n(\mathcal{E}_0(c))\leq \eta)\geq 1-5\epsilon$, when $n$ and $c$ are taken to be large enough.
\end{proof}

\begin{proof}[Proof of Theorem \ref{thm:weaklimit}]

Let $s_n(\bu, \bv) = n\omega_n^{2}\prod_{k=1}^d (j(\bv)_{k}-j(-\bu)_{k}+1)J_k^{-1}$. We claim that $s_n(\bu, \bv) \to \prod_{k=1}^d(u_k + v_k)$ uniformly for $(\bu, \bv) \in [\bm{0},c\bm{1}]\times[\bm{0},c\bm{1}]$ for any $c>0$.
We can bound $s_n$ by
\begin{align*}
n\omega_n^2 \prod_{k=1}^d (u_k r_{n,k} + v_k r_{n,k}) \le s_n(\bu,\bv)
    \le  n\omega_n^2 \prod_{k=1}^d (u_k r_{n,k} + v_k r_{n,k} + 2J_k^{-1}).
\end{align*}
Note that $n\omega_n^2 \prod_{k=1}^n r_{n,k} = 1$. By Lemma \ref{lemma:delta}, we have that 
\begin{align*}
    \left|\prod_{k=1}^d (u_k + v_k + 2(r_{n,k} J_k)^{-1}) - \prod_{k=1}^d (u_k + v_k) \right| \le C_{c,d}  \max_{1\le k \le d} (r_{n,k} J_k)^{-1},
\end{align*}
where $C_{c,d}$ is a positive constant only relative to $c$ and $d$.
Then the claim follows as $J_k\gg r_{n,k}^{-1}$ for all $1\le k\le d$.

As $\sum_{\bl\in[\bm{1}:\bJ]}V_{\bl} \sim \text{Gamma}(\alpha_\cdot + n, 1)$, by Assumption \ref{assumption:priors}, we have 
\begin{align}
\label{sumV}
    \frac{n}{\sum_{\bl\in[\bm{1}:\bJ]}V_{\bl}}-1 = O_{\P}(\max\{n^{-1}\prod_{k=1}^d J_k, n^{-1/2}\}).
\end{align}

We see that $A_{n,1}(\bu, \bv) = Y_n(\bu, \bv; V) \cdot (n/\sum_{\bl} V_{\bl})/s_n(\bu, \bv) $, where $Y_n$ is defined in Lemma \ref{lem:Yn}.
Combining with Lemma \ref{lem:Yn}, we prove that the conditional distribution of $A_{n,1}(\bu,\bv)$ given $\DD_n$ converges weakly to the distribution of $\sqrt{g_0(\bx_0)} H_2(\bu, \bv)/\prod_{k=1}^d(u_k+v_k)$ in $\LL_{\infty}([\bm{0},c\bm{1}]\times[\bm{0},c\bm{1}])$ in $\P_0$-probability.

Secondly, for $A_{n, 2}(\bu, \bv)$, we have
\begin{align}
|A_{n, 2}(\bu, \bv)| &= \omega_n^{-1}\frac{\left|\sum_{[j(-\bu):j(\bv)]} (\alpha_{\bj} + N_{\bj})/\sum_{\bl\in[\bm{1}:\bJ]}V_{\bl} - N_{\bj}/n\right|}{\prod_{k=1}^d (j(\bv)_{k}-j(-\bu)_{k}+1)J_k^{-1}} \nonumber\\
& \le  \frac{a \prod_{k=1}^d J_k}{\omega_n\sum_{\bl\in[\bm{1}:\bJ]}V_{\bl}} + \omega_n^{-1}\left|\frac{n}{\sum_{\bl\in[\bm{1}:\bJ]}V_{\bl}}-1\right|
\times \frac{\sum_{[j(-\bu):j(\bv)]}N_{\bj}/n}{\prod_{k=1}^d (j(\bv)_{k}-j(-\bu)_{k}+1)J_k^{-1}}.
\label{A_n2}
\end{align}
The first term converges in probability to zero in view of \eqref{sumV} and the condition on $J_k$, $\prod_{k=1}^d J_k \ll n\omega_n$. For the second term, by \eqref{sumV}, we can obtain that
\begin{align*}
    \omega_n^{-1}\left|\frac{n}{\sum_{\bl\in[\bm{1}:\bJ]}V_{\bl}}-1\right| = O_{\P}\left(\max\left\{\frac{\prod_{k=1}^d J_k}{n\omega_n}, \frac{1}{ \sqrt{n\omega_n^2} }\right\}\right) \to_{\P} 0,
\end{align*}
in view of the condition $\prod_{k=1}^d J_k \ll n\omega_n$ again and by noting that $\omega_n \gg n^{-1/2}$.
Next, in order to show that $A_{n,2}(\bu, \bv)$ converges to zero in probability uniformly for $(\bu, \bv)\in [c^{-\gamma}, c]^d\times [c^{-\gamma}, c]^d $ for any $c>1$ and $\gamma>1$, 
it suffices to show that 
\begin{align}\label{part2}
    \sup_{\substack{\bu\succeq c^{-\gamma}\bm{1}, \\ \bv\succeq c^{-\gamma}\bm{1}} }\frac{\sum_{[j(-\bu):j(\bv)]}N_{\bj}/n}{\prod_{k=1}^d (j(\bv)_{k}-j(-\bu)_{k}+1)J_k^{-1}} = O_{\P_0}(1).
\end{align}
To this end, Lemma \ref{lemma:AsposP1} establishes that
\begin{align}\label{formula:AsposP1}
    \E_0  \sup_{\substack{\bu\succeq c^{-\gamma}\bm{1}, \\ \bv\succeq c^{-\gamma}\bm{1}} }
    \left|\frac{\sum_{[j(-\bu):j(\bv)]}(N_{\bj}-\E_0(N_{\bj}))/n}{\prod_{k=1}^d (j(\bv)_{k}-j(-\bu)_{k}+1)J_k^{-1}}\right| = O(\omega_n).
\end{align}
Moreover, we observe that
\begin{align*}
    \sup_{\bu\succeq \bm{0}, \bv\succeq\bm{0}}\frac{\sum_{[j(-\bu):j(\bv)]}\E_0(N_{\bj})/n}{\prod_{k=1}^d (j(\bv)_{k}-j(-\bu)_{k}+1)J_k^{-1}} 
    = \sup_{\bu\succeq \bm{0}, \bv\succeq\bm{0}}\frac{\int \Ind_{\bigcup\{I_{\bj}: \bj\in [j(-\bu):j(\bv)]\}} (\bx) g_0(\bx)\d\bx}{\int \Ind_{\bigcup\{I_{\bj}: \bj\in [j(-\bu):j(\bv)]\}}(\bx)\d\bx},
\end{align*}
which is bounded by $g_0(\bm{0})$. Thus, \eqref{part2} follows. We conclude that the posterior probability that $A_{n,2}$ is smaller than a predetermined positive number uniformly for all $(\bu,\bv)\in\RR^d \times \RR^d$ goes to one in $\P_0$-probability.

We write $B_{n, 1}(\bu, \bv) = P_n(\bu, \bv) / s_n(\bu, \bv) $, where $P_n(\bu, \bv)$ is defined in Lemma \ref{lemma:P_nweaklimit} and $s_n$ is defined and investigated in the preceding part of the proof.
Thus $B_{n, 1} \rightsquigarrow \sqrt{g_0(\bx_0)}H_1(\bu, \bv)/\prod_{k=1}^d(u_k + v_k)$ on $\LL_{\infty}([\bm{0}, c\bm{1}]\times [\bm{0},c\bm{1}])$.

We can rewrite $B_{n,2}$ as  
\begin{align}\label{B_n2}
    B_{n, 2}(\bu,\bv)&  = \frac{n\omega_n\int \Ind_{\bigcup\{I_{\bj}: \bj\in [j(-\bu):j(\bv)]\}}(\bx) (g_0(\bx) - g_0(\bx_0)) \d \bx}{s_n(\bu, \bv)}
\end{align}
Under Assumption \ref{assumption:localsmoothness}, by Lemma 1 of  \cite{han2020}, we see that the mixed derivatives with the order $\bmm$ being such that $0<\sum_k m_k/\eta_k < 1$
in the expansion in Assumption \ref{assumption:localsmoothness} must be zero under the multivariate monotonicity condition.
By Assumption \ref{assumption:localsmoothness}, as the remainder in the expansion is $o(\omega_n)$ uniformly for all $(\bu, \bv)\in [\bm{0},c\bm{1}]\times[\bm{0},c\bm{1}]$, the limit of $B_{n,2}(\bu, \bv)$ is the same as that of 
\begin{align*}
    \sum_{\bm{m}\in M} \frac{\partial^{\bmm} g_0(\bx_0)}{\bm{m}!}\frac{n\omega_n \int_{{\bigcup\{I_{\bj}: \bj\in [j(-\bu):j(\bv)]\}}} 
    (\bx-\bx_0)^{\bm{m}}
    \d\bx}{s_n(\bu, \bv)}.
\end{align*}
We note that
\begin{eqnarray*}
    \lefteqn{ \int_{{\bigcup\{I_{\bj}: \bj\in [j(-\bu):j(\bv)]\}}} 
    (\bx-\bx_0)^{\bm{m}}
    \d\bx }\\
    &&= 
    \frac{1}{\prod_{k=1}^d (m_k + 1)}\Big[
    \prod_{k=1}^d\left(\frac{\ceil{(x_{0,k} + v_k r_{n,k})J_k}}{J_k} - x_{0,k}\right)^{m_k+1} \\
    && \quad
    -\prod_{k=1}^d\left(\frac{\ceil{(x_{0,k} - u_k r_{n,k})J_k}-1}{J_k} - x_{0,k}\right)^{m_k+1}
    \Big].
\end{eqnarray*}
As $r_{n,k} = \omega_n^{1/\eta_k}$ and $\sum_{k=1}^d m_k/\eta_k = 1$ for $\bmm \in M$, we have $\prod_{k=1}^d r_{n,k}^{m_k + 1} = \omega_n^{1+\sum_{k=1}^d \eta_k^{-1}}$. Then it follows that
\begin{align*}
 n \omega_n \prod_{k=1}^d\left(\frac{\ceil{(x_{0,k} + v_k r_{n,k})J_k}}{J_k} - x_{0,k}\right)^{m_k+1}
    = \prod_{k=1}^d(v_k + O((J_k r_{n,k})^{-1}))^{m_k + 1} \to \prod_{k=1}^d v_k^{m_k + 1},
\end{align*}
as $J_k \gg r_{n,k}^{-1}$. By the same argument, we have that 
\begin{align*}
    n \omega_n \prod_{k=1}^d\left(\frac{\ceil{(x_{0,k} - u_k r_{n,k})J_k} - 1}{J_k} - x_{0,k}\right)^{m_k+1}
    \to \prod_{k=1}^d (-u_k)^{m_k + 1}.
\end{align*}
In view of \eqref{B_n2}, we have shown that
\begin{align*}
    B_{n, 2}(\bu,\bv) \to \sum_{\bm{m}\in M} \frac{\partial^{\bmm} g_0(\bx_0)}{(\bm{m}+\bm{1})!}\prod_{k=1}^d\frac{v_k^{m_k+1} - (-u_k)^{m_k+1}}{u_k+v_k},
\end{align*}
uniformly for $(\bu, \bv)\in [c^{-\gamma}, c]^d\times [c^{-\gamma}, c]^d $ for any $c>1$ and $\gamma>1$.

The second condition of Proposition \ref{prop:weakconvergence} is shown by Lemma \ref{lemma:dev}. 

For the third condition, from Proposition 7 in \cite{han2020}, we know that 
$$\lim_{c\to \infty} \P(W_c\neq W) = \lim_{c\to \infty} \int \P(W_c \neq W |H_1) \d\P_{H_1} = 0,$$
which implies the third condition by Markov's inequality. 
By verifying all the three conditions in Lemma \ref{prop:weakconvergence}, we conclude the proof of the weak convergence.

\end{proof}

\def\thesection{\Alph{section}}
\setcounter{section}{0} 
\section{Auxiliary results}
\label{sec:appendix}

\begin{lemma}\label{lemma:delta}
For any $c>1$,
let $h:[0,c]^d\times[0,c]^d \mapsto \RR$ be given by $h(\bm{a},\bm{b})=\prod_{k=1}^d(a_k+b_k)$. Then
$|h(\bm{a},\bm{b})-h(\bm{a}',\bm{b}')|\lesssim \|\bm{a}-\bm{a}'\|+\|\bm{b}-\bm{b}'\|$,
where the implicit constant multiple depends only on $c$ and $d$.
\end{lemma}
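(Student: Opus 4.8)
The plan is to exploit the multilinear structure of $h$ via a telescoping decomposition of the product difference. First I would set $x_k=a_k+b_k$ and $y_k=a_k'+b_k'$ and invoke the elementary identity
\[
\prod_{k=1}^d x_k-\prod_{k=1}^d y_k=\sum_{k=1}^d\Big(\prod_{j<k}y_j\Big)(x_k-y_k)\Big(\prod_{j>k}x_j\Big).
\]
Since $\bm{a},\bm{b},\bm{a}',\bm{b}'$ all lie in $[0,c]^d$, every factor $x_j$ and $y_j$ lies in $[0,2c]$, so each summand is bounded in absolute value by $(2c)^{d-1}\lvert x_k-y_k\rvert$. Next, $\lvert x_k-y_k\rvert=\lvert (a_k-a_k')+(b_k-b_k')\rvert\le \lvert a_k-a_k'\rvert+\lvert b_k-b_k'\rvert$, and summing over $k$ yields
\[
\lvert h(\bm{a},\bm{b})-h(\bm{a}',\bm{b}')\rvert\le (2c)^{d-1}\big(\|\bm{a}-\bm{a}'\|_1+\|\bm{b}-\bm{b}'\|_1\big).
\]
Finally I would pass from the $\ell_1$-norm to the Euclidean norm via $\|\cdot\|_1\le\sqrt d\,\|\cdot\|_2$, which gives the asserted bound with implicit constant $\sqrt d\,(2c)^{d-1}$, depending only on $c$ and $d$.

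An equivalent route, which I would mention as an alternative, is to observe that $h$ is smooth on the convex set $[0,c]^d\times[0,c]^d$ with $\partial h/\partial a_k=\partial h/\partial b_k=\prod_{j\ne k}(a_j+b_j)$, each bounded by $(2c)^{d-1}$; the mean value inequality applied along the segment joining $(\bm{a},\bm{b})$ and $(\bm{a}',\bm{b}')$ then produces the same Lipschitz estimate. There is essentially no obstacle here: the only points that need a word of care are that nonnegativity together with the upper bound $c$ on each coordinate keeps all the factors in $[0,2c]$, and that the domain is convex so the telescoping (or mean value) argument is legitimate — both of which are exactly the hypotheses in the statement.
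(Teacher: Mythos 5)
Your argument is correct and yields the same constant $\sqrt{d}\,(2c)^{d-1}$ as the paper. Your primary route differs slightly in mechanism: you use the purely algebraic telescoping identity $\prod_k x_k-\prod_k y_k=\sum_k(\prod_{j<k}y_j)(x_k-y_k)(\prod_{j>k}x_j)$ with each factor in $[0,2c]$, whereas the paper invokes a first-order Taylor (mean-value) bound, using that $\partial h/\partial a_k=\partial h/\partial b_k=\prod_{l\neq k}(a_l+b_l)\le(2c)^{d-1}$, and then passes from the $\ell_1$- to the Euclidean norm by Cauchy--Schwarz exactly as you do. The alternative you sketch at the end is precisely the paper's proof. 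The telescoping version is arguably a touch cleaner, since it is exact for the multilinear $h$ and sidesteps any fuss about where the derivatives are evaluated (the paper's displayed inequality, with derivatives taken at $(\bm{a}',\bm{b}')$, really relies on the mean-value form or on this same multilinearity); but the two arguments are otherwise interchangeable and of equal length.
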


\begin{proof}
As $h$ is a polynomial function of $2d$ arguments and the order with respect to each argument is one while holding the rest fixed, then, by the first-order Taylor expansion, we have that 
\begin{align*}
    |h(\bm{a}, \bm{b}) - h(\bm{a}', \bm{b}')| 
    \leq \sum_{k=1}^d \frac{\partial h}{\partial a_k}(\bm{a}', \bm{b}')|a_k - a'_k| + \sum_{k=1}^d \frac{\partial h}{\partial b_k}(\bm{a}', \bm{b}')|b_k - b'_k|.
\end{align*}
Note that
$\partial h/\partial a_k(\bm{a}',\bm{b}') = 
\partial h/\partial b_k(\bm{a}',\bm{b}') =
\prod_{l\neq k} (a'_l+b'_l)\leq (2c)^{d-1}$. 
Thus $ |h(\bm{a}, \bm{b}) - h(\bm{a}', \bm{b}')| $  is bounded by a multiple of $\|\bm{a}-\bm{a}'\|_1+\|\bm{b}-\bm{b}'\|_1$.
By the Cauchy–Schwarz inequality, the sum of the $\LL_1$-norms is bounded further by $\sqrt{d}(\|\bu-\bu'\|+\|\bv-\bv'\|)$, which gives the desired inequality.
\end{proof}

\begin{lemma}[Lemma C.6 of  
\cite{han2020}, Supplement C]
\label{lemma:max_diff}
Let $a\in(1,\infty)$, $b\in(0, \gamma)$, and $\gamma\in(b, a + b - 1)$. Let $R_{a,b,\gamma}(c) = \{ (\bu,\bv)\in \RR^d_{\geq 0}\times \RR^d_{\geq 0} : u_k\in[0,c], 1\leq k \leq d; u_d \leq c^{-\gamma}; 0\leq v_k \leq c^a, 1\leq k \leq d; 0\leq v_d \leq c^{-b}\}$. Let $H(\bu,\bv)$ be a centered Gaussian process with covariance kernel 
\begin{align}
    \mathrm{Cov}(H(\bu,\bv), H(\bu',\bv')) = \prod_{k=1}^d(u_k \wedge u_k' +v_k \wedge v_k' ), 
    \label{covariance H}
\end{align}
for all $(\bu,\bv),(\bu',\bv') \in \RR^d_{\geq0}\times \RR^d_{\geq 0}$. 
Then there exists a constant $C_{a,d}>0$ such that for any $c>1$,
\begin{align*}
    \E\big( \sup_{(\bu,\bv)\in R_{a,b,\gamma}(c)} |H(\bu, \bv) - H(\bm{0},\bv)|\big) \leq C_{a,d} \sqrt{c^{a(d-1)-\gamma}\log c}.
\end{align*}
\end{lemma}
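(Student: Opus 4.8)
The plan is to realize $H$ as integrated Gaussian white noise and then run a chaining (Dudley entropy) argument on the increment process. Let $W$ be a Gaussian white noise on $\RR^d$ with Lebesgue control measure, so that $\E(W(A)W(B))=\lambda(A\cap B)$ for Borel sets $A,B$ of finite Lebesgue measure $\lambda$. Setting $H(\bu,\bv)=W(\prod_{k=1}^d[-u_k,v_k])$ gives a centered Gaussian process with exactly the covariance \eqref{covariance H}, because $\prod_k[-u_k,v_k]\cap\prod_k[-u_k',v_k']=\prod_k[-(u_k\wedge u_k'),v_k\wedge v_k']$ has measure $\prod_k(u_k\wedge u_k'+v_k\wedge v_k')$; by separability it has the same law as the process in the statement. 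Since $\prod_k[0,v_k]\subseteq\prod_k[-u_k,v_k]$, the increment equals $H(\bu,\bv)-H(\bm{0},\bv)=W(S_{\bu,\bv})$ with $S_{\bu,\bv}=\prod_k[-u_k,v_k]\setminus\prod_k[0,v_k]$.

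Next I would telescope in the $\bu$-coordinates: with $H^{(j)}(\bu,\bv)=H(u_1,\dots,u_j,0,\dots,0,\bv)$ we get $H(\bu,\bv)-H(\bm{0},\bv)=\sum_{j=1}^{d}\bigl(H^{(j)}(\bu,\bv)-H^{(j-1)}(\bu,\bv)\bigr)$ and $H^{(j)}(\bu,\bv)-H^{(j-1)}(\bu,\bv)=W(R^{(j)}_{\bu,\bv})$, where $R^{(j)}_{\bu,\bv}=\prod_{k<j}[-u_k,v_k]\times[-u_j,0)\times\prod_{k>j}[0,v_k]$. By the triangle inequality it suffices to bound $\E\sup_{(\bu,\bv)\in R_{a,b,\gamma}(c)}\lvert W(R^{(j)}_{\bu,\bv})\rvert$ by a constant multiple of $\sqrt{c^{a(d-1)-\gamma}\log c}$ for each $j$. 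For fixed $j$ this is a Gaussian process indexed by a rectangle with side lengths ranging over bounded intervals, with canonical semimetric $\rho_j$ given by $\rho_j((\bu,\bv),(\bu',\bv'))^2=\lambda(R^{(j)}_{\bu,\bv}\triangle R^{(j)}_{\bu',\bv'})$; using $\lambda(\prod_k I_k\triangle\prod_k I_k')\le\sum_k\lvert I_k\triangle I_k'\rvert\prod_{l\ne k}\max(\lvert I_l\rvert,\lvert I_l'\rvert)$ together with $\lvert[-u,v]\triangle[-u',v']\rvert\le\lvert u-u'\rvert+\lvert v-v'\rvert$ shows that every $\epsilon$-ball of $\rho_j$ contains a Euclidean ball of radius at least $\epsilon^2$ divided by a fixed power of $c$.

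Two computations then drive the result. First, the diameter: $\lambda(R^{(j)}_{\bu,\bv})=\prod_{k<j}(u_k+v_k)\,u_j\prod_{k>j}v_k$ is, on $R_{a,b,\gamma}(c)$, at most a multiple of $c^{a(d-1)-\gamma}$ when $j=d$ (using $u_d\le c^{-\gamma}$) and at most a multiple of $c^{a(d-2)+1-b}$ when $j<d$ (the factor $v_d\le c^{-b}$ then appears), and $a(d-2)+1-b\le a(d-1)-\gamma$ exactly because $\gamma<a+b-1$; hence $\mathrm{diam}(\rho_j)\lesssim c^{(a(d-1)-\gamma)/2}$ for every $j$. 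Second, the covering number: since $R_{a,b,\gamma}(c)\subseteq[0,c^a]^{2d}$ and $\epsilon$-balls of $\rho_j$ contain Euclidean balls of radius at least $\epsilon^2$ times a fixed negative power of $c$, one obtains $\log\mathcal{N}(\epsilon,R_{a,b,\gamma}(c),\rho_j)\lesssim\log c+\log(1/\epsilon)$, with implied constant depending on $a,b,\gamma,d$ only. Plugging these into Dudley's entropy bound (see, e.g., \cite{van1996weak}), $\E\sup\lvert W(R^{(j)}_{\cdot})\rvert\lesssim\int_0^{\mathrm{diam}(\rho_j)}\sqrt{\log\mathcal{N}(\epsilon,R_{a,b,\gamma}(c),\rho_j)}\,d\epsilon$, and using $\int_0^{D}\bigl(\sqrt{\log c}+\sqrt{(\log(1/\epsilon))_{+}}\bigr)\,d\epsilon\lesssim D\sqrt{\log c}$ — valid because $\lvert\log D\rvert$ is a fixed multiple of $\log c$, with $c$ bounded away from $1$ (the regime of interest being $c\to\infty$) — yields a bound of order $c^{(a(d-1)-\gamma)/2}\sqrt{\log c}=\sqrt{c^{a(d-1)-\gamma}\log c}$, regardless of whether $D=\mathrm{diam}(\rho_j)$ exceeds $1$. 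Summing over $j=1,\dots,d$ gives the lemma.

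I expect the main obstacle to be the bookkeeping in the covering-number step: one must verify that the contribution to $\rho_j$ of each coordinate — in particular of the coordinates $u_k,v_k$ with $k<d$, which range over the long interval $[0,c^a]$ — is damped by the relevant cross-sectional measure of $R^{(j)}_{\bu,\bv}$, so that $\mathcal{N}(\epsilon,\cdot,\rho_j)$ stays polynomial in $c/\epsilon$ rather than growing like a high power of $c^a$. The constraints $u_d\le c^{-\gamma}$, $v_d\le c^{-b}$ and the hypothesis $\gamma<a+b-1$ are precisely what force every exponent of $c$ produced by the diameter and the semimetric Lipschitz constants to be at most $a(d-1)-\gamma$, which is what Dudley's integral needs in order to reproduce the stated rate. (Strictly, the constant also depends on $b$ and $\gamma$; in the intended applications these are fixed multiples of $a$ and $d$, collapsing the dependence to $a,d$.)
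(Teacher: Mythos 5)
The paper does not prove this lemma at all: it is imported verbatim as Lemma C.6 of the supplement to \cite{han2020}, so there is no in-paper argument to compare against. Your self-contained proof is sound and follows the natural route for such bounds (essentially the one used in the cited source): realize $H(\bu,\bv)$ as white noise of the box $\prod_k[-u_k,v_k]$, telescope the increment $H(\bu,\bv)-H(\bm{0},\bv)$ over the $\bu$-coordinates into $d$ slab processes $W(R^{(j)}_{\bu,\bv})$, and apply Dudley's entropy bound to each. The two computations you single out are the heart of the matter and your arithmetic checks out: $\lambda(R^{(d)})\lesssim c^{a(d-1)-\gamma}$ from $u_d\le c^{-\gamma}$, while for $j<d$ the factor $v_d\le c^{-b}$ gives $\lambda(R^{(j)})\lesssim c^{a(d-2)+1-b}$, and $a(d-2)+1-b\le a(d-1)-\gamma$ is exactly $\gamma\le a+b-1$; the Lipschitz control of $\rho_j^2$ by a fixed power of $c$ times the Euclidean distance keeps the entropy at $O(\log c+\log_+(1/\epsilon))$, so the Dudley integral returns $\mathrm{diam}(\rho_j)\sqrt{\log c}$. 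Your two caveats are both legitimate and worth keeping: the constant genuinely depends on $(a,b,\gamma,d)$ (harmless here, since the paper fixes $a=3$, $b$ and $\gamma=b+1$ as functions of $a$, $d$ and the $\eta_k$), and the final step $\int_0^D\sqrt{\log_+(1/\epsilon)}\,d\epsilon\lesssim D\sqrt{\log c}$ needs $c$ bounded away from $1$ — indeed the statement as written cannot hold uniformly down to $c\to 1^+$, since the left side stays bounded away from zero while the right side vanishes; this is an artifact of the quoted statement, not of your proof, and is irrelevant to its use in Lemma 6.5, where $c\to\infty$.
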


\begin{lemma}[Lemma B.4 of \cite{Kang_Coverage}]
\label{lemma:Nj}
If $q_1 \leq g_0 \leq q_2$ for $0<q_1\le q_2<\infty$, then
\begin{align*}
    \P_0(q_1n/(2\prod J_k) \leq \min_{\bj} N_{\bj} \leq \max_{\bj} N_{\bj} \leq 2q_2n /(\prod J_k) )\to 1.
\end{align*}
 
\end{lemma}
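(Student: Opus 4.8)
The plan is to view $N_{\bj}$ as a binomial count, control each $N_{\bj}$ by a multiplicative Chernoff bound, and then close with a union bound over the $\prod_{k=1}^d J_k$ cells. Concretely, since $N_{\bj}=\sum_{i=1}^n \Ind_{I_{\bj}}(\bX_i)$, we have $N_{\bj}\sim\text{Bin}(n,\theta_{0,\bj})$ with $\theta_{0,\bj}=\int_{I_{\bj}}g_0$; because each $I_{\bj}$ has Lebesgue measure $1/\prod_{k=1}^d J_k$, the hypothesis $q_1\le g_0\le q_2$ yields $q_1/\prod_{k=1}^d J_k\le \theta_{0,\bj}\le q_2/\prod_{k=1}^d J_k$ for every $\bj\in[\bm{1}:\bJ]$. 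Writing $\mu_{\bj}=n\theta_{0,\bj}$, this gives $q_1 n/\prod_{k=1}^d J_k\le\mu_{\bj}\le q_2 n/\prod_{k=1}^d J_k$, and in particular $\mu_{\bj}\to\infty$ uniformly in $\bj$ under the growth conditions in force (for which $\prod_{k=1}^d J_k$ is of smaller order than $n$).

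Next I would bound the two tails separately. For the lower deviation, $q_1 n/(2\prod_{k=1}^d J_k)\le\mu_{\bj}/2$, so the Chernoff estimate $\P_0(N_{\bj}\le(1-\tfrac12)\mu_{\bj})\le e^{-\mu_{\bj}/8}$ gives $\P_0\big(N_{\bj}<q_1 n/(2\prod_{k=1}^d J_k)\big)\le e^{-q_1 n/(8\prod_{k=1}^d J_k)}$. For the upper deviation, $2q_2 n/\prod_{k=1}^d J_k\ge 2\mu_{\bj}$, so $\P_0(N_{\bj}\ge 2\mu_{\bj})\le e^{-\mu_{\bj}/3}$ gives $\P_0\big(N_{\bj}>2q_2 n/\prod_{k=1}^d J_k\big)\le e^{-q_1 n/(3\prod_{k=1}^d J_k)}$. (Bernstein's inequality, cited earlier in the paper, would serve equally well at the cost of slightly different constants.) Hence each cell contributes at most $2e^{-c_0 n/\prod_{k=1}^d J_k}$ with $c_0=q_1/8$.

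A union bound over the $\prod_{k=1}^d J_k$ cells then bounds the probability of the complement of the event in the statement by $2\prod_{k=1}^d J_k\,\exp\!\big(-c_0 n/\prod_{k=1}^d J_k\big)$. Under the growth restrictions on $\bJ$ used throughout ($J_k\gg r_{n,k}^{-1}$ together with $\prod_{k=1}^d J_k\ll n\omega_n$, where $\omega_n$ is a negative power of $n$), the quantity $\prod_{k=1}^d J_k$ is trapped between two powers of $n$ with exponents strictly in $(0,1)$; consequently $n/\prod_{k=1}^d J_k$ grows like a positive power of $n$ and dominates $\log(\prod_{k=1}^d J_k)\le\log n$. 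Therefore the displayed bound tends to $0$, which proves the claim.

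The only genuinely nontrivial point — and the step I would treat most carefully — is closing this union bound: one must verify that the exponential factor $e^{-c_0 n/\prod_k J_k}$ actually beats the cardinality $\prod_k J_k$ of the index set, which forces one to invoke the explicit growth conditions on $\bJ$ rather than the weaker bound $\prod_k J_k\le n$ alone. The binomial identification, the Chernoff tail estimates, and the bookkeeping of constants are all routine.
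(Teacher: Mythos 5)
Your argument is correct. Note first that the paper itself gives no proof of this lemma: it is imported verbatim as Lemma B.4 of \cite{Kang_Coverage}, so there is nothing internal to compare against, and what you have written is a complete, self-contained substitute. The chain of steps is sound: the identification $N_{\bj}\sim\mathrm{Bin}(n,\theta_{0,\bj})$ with $q_1/\prod_k J_k\le\theta_{0,\bj}\le q_2/\prod_k J_k$ is immediate from $|I_{\bj}|=1/\prod_k J_k$, the two multiplicative Chernoff bounds with $\delta=\tfrac12$ and $\delta=1$ give the stated exponents $e^{-\mu_{\bj}/8}$ and $e^{-\mu_{\bj}/3}$, and the union bound over $\prod_k J_k$ cells yields $2\prod_k J_k\exp(-q_1 n/(8\prod_k J_k))\to 0$. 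You are also right to flag the one point that is not purely mechanical: as literally stated the lemma has no hypothesis on $\bJ$, and the conclusion would fail if, say, $\prod_k J_k\asymp n$; the growth conditions in force wherever the lemma is invoked ($J_k\gg r_{n,k}^{-1}$ and $\prod_k J_k\ll n\omega_n$ with $\omega_n$ a negative power of $n$) trap $\prod_k J_k$ between two powers of $n$ with exponents in $(0,1)$, so $n/\prod_k J_k$ grows polynomially and beats $\log\prod_k J_k$. Making that hypothesis explicit is the only emendation I would suggest; otherwise the proof stands as written.
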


\begin{lemma}\label{lemma:posimax}
    For the Gaussian process $H(\bu,\bv)$ with covariance kernel  \eqref{covariance H}, 
            $$ \max\{H(\bm{0},\bv): 0\leq v_k \leq  1, \, 1\leq k \leq d\}  \geq 0 \text{ a.s.}$$
  
\end{lemma}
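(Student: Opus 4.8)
The plan is to exploit the degeneracy of the covariance kernel at the origin. The index set for $\bv$ in the statement is the closed unit cube $[0,1]^d$, which contains the point $\bv=\bm{0}$. Evaluating \eqref{covariance H} at $(\bm{0},\bm{0})$ gives $\mathrm{Var}(H(\bm{0},\bm{0})) = \prod_{k=1}^d (0\wedge 0 + 0\wedge 0) = 0$. Since $H$ is a centered Gaussian process, a coordinate with zero variance is almost surely equal to its mean, so $H(\bm{0},\bm{0})=0$ a.s.

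Consequently, on the almost sure event $\{H(\bm{0},\bm{0})=0\}$ we have
\[
\max\{H(\bm{0},\bv): 0\le v_k\le 1,\ 1\le k\le d\} \ge H(\bm{0},\bm{0}) = 0,
\]
which is the claim. If one reads $\max$ as $\sup$ the same inequality holds verbatim; if one insists that the maximum be attained, it suffices to recall that the covariance kernel \eqref{covariance H} is continuous on $[0,c]^d\times[0,c]^d$, so $H$ admits a version with sample paths continuous on the compact cube $[0,1]^d$, whence the supremum over $\bv\in[0,1]^d$ is a genuine maximum and the displayed bound still applies.

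There is essentially no obstacle here; the only point requiring a word of care is the measurability of the supremum, which is handled by passing to a separable (indeed sample-continuous) modification of $H$, so that the supremum over $[0,1]^d$ agrees with the supremum over a countable dense subset and the event $\{H(\bm{0},\bm{0})=0\}$ is measurable as needed.
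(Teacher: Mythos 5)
Your argument is a correct proof of the lemma as literally stated: $\bv=\bm{0}$ lies in the index set, $\mathrm{Var}(H(\bm{0},\bm{0}))=\prod_{k=1}^d(0\wedge 0+0\wedge 0)=0$, so $H(\bm{0},\bm{0})=0$ a.s.\ and the supremum over $[0,1]^d$ is nonnegative. However, this route only delivers the degenerate, weak version of the claim, whereas the paper's proof establishes the strictly stronger fact $\P\big(\max\{H(\bm{0},\bv):\bv\in[0,1]^d\}\le 0\big)=0$, i.e.\ the maximum is \emph{strictly} positive almost surely. The paper does this by restricting to the slice $v_d\mapsto H(\bm{0},(1,\ldots,1,v_d))$, which by \eqref{covariance H} has covariance $v_d\wedge v_d'$ and is therefore a standard Brownian motion, and then applying the reflection principle, $\P(\max_{0\le v_d\le 1}B(v_d)\le\rho)=1-2\P(B(1)\ge\rho)$, which vanishes at $\rho=0$.

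The distinction is not cosmetic, because of how the lemma is used in the proof of Lemma \ref{lemma:dev}: there one needs, for every $\eta\epsilon>0$, some $\rho_{\eta,\epsilon}>0$ with $\P\big(\max_{\bv\in[0,1]^d}(H_1(\bm{0},\bv)+H_2(\bm{0},\bv))\le\rho_{\eta,\epsilon}/\sqrt{g_0(\bx_0)}\big)\le\eta\epsilon$. Since $\P(\max\le\rho)\downarrow\P(\max\le 0)$ as $\rho\downarrow 0$, such a $\rho_{\eta,\epsilon}$ exists exactly when $\P(\max\le 0)=0$. Your argument leaves open the possibility that $\{\max=0\}$ has positive probability (you only exhibit the value $0$ at the degenerate corner), in which case $\P(\max\le\rho)\ge\P(\max=0)>0$ for every $\rho>0$ and the downstream step fails. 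So treat your proof as valid for the displayed inequality but insufficient for its intended use; the Brownian-motion-slice/reflection argument is the one to keep. A minor secondary point: continuity of the covariance kernel does not in general imply a sample-continuous version of a Gaussian process; here it is harmless because $H(\bm{0},\cdot)$ has the Brownian-sheet covariance $\prod_{k=1}^d v_k\wedge v_k'$, which is sample-continuous, and in any case the a.s.\ lower bound holds with $\sup$ in place of $\max$.
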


\begin{proof}
   For $d=1$, $H(0, v)$ is a Brownian motion on $v\ge 0$. The first statement is simply induced by the reflection principle. When $s=d$, by noting that $H(\bm{0}, (1,\ldots, 1, v_d))$ is a Brownian motion with respect to $v_d>0$, it holds that
\begin{align*}
    0 \leq \P \Big(\max_{\substack{0\leq v_k \leq  1\\1\leq k \leq d}} H(\bm{0},\bv) \leq 0 \Big) \leq \P\Big(\max_{0\leq v_d \leq 1} H(\bm{0}, (1,\ldots, 1, v_d)) \leq 0 \Big) = 0
\end{align*}
by the fact that 
$
    \P\Big(\max_{0\leq v_d \leq 1} H(\bm{0}, (1,\ldots, 1, v_d)) \leq \rho \Big) = 
    1-2\P(H(\bm{0}, \bm{1}) \ge \rho ),
$
and the continuity of normal distribution.
\end{proof}

\begin{lemma}
\label{lemma:gammamoments}
If $U\sim \mathrm{Gamma}(\Delta, 1)$, then for every $m\in \ZZ_{>0}$, 
    $ \E( U - \E(U) )^{2m} \lesssim \Delta^m$  as  $ \Delta\to\infty$.
\end{lemma}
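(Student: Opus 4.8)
The plan is to reduce the statement to a fact about the central moments of the Gamma law and prove it by a one-line recursion. Since $\E U=\Delta$ for $U\sim\mathrm{Gamma}(\Delta,1)$, writing $\mu_k=\E(U-\E U)^k=\E(U-\Delta)^k$, the assertion is exactly that $\mu_{2m}\lesssim\Delta^m$ as $\Delta\to\infty$ for each fixed $m\in\ZZ_{>0}$.

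The first step is to establish the recursion
\[
\mu_k=(k-1)\bigl(\mu_{k-1}+\Delta\,\mu_{k-2}\bigr),\qquad k\ge 2,
\]
with $\mu_0=1$ and $\mu_1=0$. This follows from the Stein-type identity $\E\bigl[U h'(U)+(\Delta-U)h(U)\bigr]=0$, valid for any polynomially bounded differentiable $h$, which is itself obtained by integrating $\int_0^\infty u\,h'(u)\,u^{\Delta-1}e^{-u}\,\d u$ by parts; the boundary terms vanish because $u^{\Delta}e^{-u}\to 0$ at both $0$ and $\infty$ for $\Delta>0$. Taking $h(u)=(u-\Delta)^{k-1}$ gives $(k-1)\E\bigl[U(U-\Delta)^{k-2}\bigr]=\mu_k$, and writing $U=(U-\Delta)+\Delta$ turns the left side into $(k-1)(\mu_{k-1}+\Delta\,\mu_{k-2})$.

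The second step is an induction on $k$ showing $\mu_k=O(\Delta^{\lfloor k/2\rfloor})$ as $\Delta\to\infty$, with implied constants depending only on $k$. The base cases $\mu_0=1$ and $\mu_1=0$ are trivial; for the inductive step the recursion bounds $\mu_k$ by a constant multiple of $\Delta^{\lfloor(k-1)/2\rfloor}+\Delta^{1+\lfloor(k-2)/2\rfloor}$, and one checks that both exponents are at most $\lfloor k/2\rfloor$ (indeed $1+\lfloor(k-2)/2\rfloor=\lfloor k/2\rfloor$). Specializing to $k=2m$ yields $\mu_{2m}\lesssim\Delta^m$, which is the claim.

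There is no genuine obstacle here; the only points needing mild care are verifying that the integration-by-parts boundary terms vanish and the elementary bookkeeping of exponents in the induction. An equally short alternative is to note that the cumulants of $\mathrm{Gamma}(\Delta,1)$ are $\kappa_j=\Delta\,(j-1)!$, so the moment–cumulant formula expresses $\mu_{2m}$ as a finite sum, depending only on $m$, over partitions of $\{1,\dots,2m\}$ into blocks of size at least $2$, each partition contributing $\Delta^{(\#\text{blocks})}\prod_B(|B|-1)!$; since such a partition has at most $m$ blocks, $\mu_{2m}$ is a polynomial in $\Delta$ of degree $m$, giving the bound (and in fact $\mu_{2m}/\Delta^m\to(2m-1)!!$).
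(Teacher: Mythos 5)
Your proof is correct, and it takes a genuinely different route from the paper. The paper represents $U$ as a sum of $n_\Delta$ i.i.d.\ standard exponentials plus one $\mathrm{Gamma}(\Delta-n_\Delta,1)$ remainder, applies the Marcinkiewicz--Zygmund inequality to the centered sum, and then uses Jensen's inequality to reduce to the (bounded) $2m$-th central moments of the individual summands, giving $(n_\Delta+1)^m\lesssim\Delta^m$. You instead stay entirely inside the Gamma law: the Stein identity $\E[Uh'(U)+(\Delta-U)h(U)]=0$ with $h(u)=(u-\Delta)^{k-1}$ yields the recursion $\mu_k=(k-1)(\mu_{k-1}+\Delta\mu_{k-2})$, and the exponent bookkeeping $1+\lfloor(k-2)/2\rfloor=\lfloor k/2\rfloor$ closes the induction. (A quick sanity check: $\mu_2=\Delta$, $\mu_3=2\Delta$, $\mu_4=3\Delta^2+6\Delta$, all matching the known Gamma moments.) The one point to state explicitly is that the induction runs through odd $k$ as well, so the hypothesis should be $|\mu_k|=O(\Delta^{\lfloor k/2\rfloor})$ and the recursion used with a triangle inequality; this is implicit in your $O(\cdot)$ notation and costs nothing. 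What each approach buys: yours is self-contained (no appeal to the MZ inequality), avoids the slightly awkward integer/non-integer case split for $\Delta$, and delivers the exact polynomial in $\Delta$ with the sharp leading constant $(2m-1)!!$ — as does your cumulant alternative, since $\kappa_j=\Delta(j-1)!$ makes $\mu_{2m}$ a degree-$m$ polynomial by the partition formula. The paper's decomposition argument, on the other hand, is the one that generalizes off-the-shelf to sums of independent non-identically-distributed variables, which is closer in spirit to how such bounds are used elsewhere in the manuscript.
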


\begin{proof}
Suppose $n_{\Delta} \le \Delta < n_{\Delta}+1$ for some integer $n_{\Delta}$. We will assume that $\Delta>n_{\Delta}$ in this proof. If $\Delta$ is an integer, the argument can proceed in a similar and simpler way.
Let $Z_i$ denote independent random variables with standard exponential distribution for $i = 1,\ldots, n_{\Delta}$. Let $Z_{n_{\Delta} + 1}\sim \text{Gamma}(\Delta-n_{\Delta}, 1)$. Then $U$ is equal to $\sum_{i=1}^{n_{\Delta} + 1} Z_i$ in distribution. By Marcinkiewicz–Zygmund inequality, there exists a constant $C_m>0$ depending only on $m$ such that,
\begin{align}
\label{inequ:2mmoment}
    \E(U - \E(U) )^{2m} = \E\big[ \sum_{i=1}^{n_{\Delta}+1} (Z_i - \E(Z_i)) \big]^{2m} 
    \leq C_m  \E\big[ \sum_{i=1}^{n_{\Delta}+1} (Z_i - \E(Z_i))^2 \big]^{m}.
\end{align}
By Jensen's inequality,
\begin{align*}
    \big[\frac{1}{n_{\Delta} + 1} \sum_{i=1}^{n_{\Delta}+1} (Z_i - \E(Z_i))^2 \big]^{m} \le \frac{1}{n_{\Delta} + 1}  \sum_{i=1}^{n_{\Delta}+1} (Z_i - \E(Z_i))^{2m}.
\end{align*}
The the right-hand side of \eqref{inequ:2mmoment} is bounded by $(n_{\Delta}+1)^{m-1}\sum_{i=1}^{n_{\Delta}+1} \E(Z_i - \E(Z_i))^{2m}$. For fixed $m$, the lemma follows when $\Delta \to \infty$.
\end{proof}

\section*{Acknowledgments}

This research is partially supported by NSF grant number DMS-1916419.

\bibliographystyle{plain}
\bibliography{mybib}

\end{document}